\documentclass[11pt,a4paper]{article}
\usepackage[margin=2.5cm]{geometry}
\usepackage{amsmath,amssymb,amsthm}
\usepackage{enumitem}
\usepackage{latexsym}
\usepackage{hyperref}
\hypersetup{colorlinks=true,linkcolor=blue,citecolor=blue}
 
\newtheorem{theorem}{Theorem}[section]
\newtheorem{lemma}[theorem]{Lemma}
\newtheorem{corollary}[theorem]{Corollary}
\newtheorem{proposition}[theorem]{Proposition}
\newtheorem{remark}[theorem]{Remark}

\title{The Local Four-Square Problem over \(\mathbb{Z}_{p^k}\)}
\author{H. Orelma\footnote{Tampere Institute of Mathematics, Hepolamminkatu 51, 33720 Tampere, Finland\\
E-Mail: heikki.orelma@proton.me, heikki.orelma@mail.ru}}
\date{\today}

\begin{document}

\maketitle

\begin{abstract}
The norm map \(N:\mathcal{H}_{\mathbb{Z}_{p^k}}\to \mathbb{Z}_{p^k}\) is studied on the quaternion ring over \(\mathbb{Z}_{p^k}\), where \(p\) is an odd prime and $k\ge 1$ an integer. By means of the isomorphism \(\mathcal{H}_{\mathbb{Z}_{p^k}}\cong M_2(\mathbb{Z}_{p^k})\), quaternions are investigated using matrix methods. It is shown that the fibre size
\[
a_{p^k}(m)=|\{q\in \mathcal{H}_{\mathbb{Z}_{p^k}}:N(q)=m\}|
\]
depends only on the \(p\)-adic valuation \(v_p(m)\) of \(m\). Explicit formulas for the fibre sizes are derived for every \(m\in\mathbb{Z}_{p^k}\):
\[
a_{p^k}(m)=
\begin{cases}
p^{3k-2}(p^2-1), & t=0,\\[6pt]
p^{3k-2-t}(p+1)(p^{t+1}-1), & 0<t<k,\\[6pt]
p^{2k-1}(p^{k+1}+p^k-1), & t=k,
\end{cases}
\]
where \(t=v_p(m)\) (with the convention \(v_p(0)=k\)).

The main result of the paper is a complete solution to the \emph{local four-square problem} over the ring \(\mathbb{Z}_{p^k}\): the number \(a_{p^k}(m)\) gives the exact number of quadruples \((x_1,x_2,x_3,x_4)\in\mathbb{Z}_{p^k}^4\) satisfying
\[
m=x_1^2+x_2^2+x_3^2+x_4^2.
\]
The proof is purely algebraic; it relies only on matrix theory and Smith normal form, thus avoiding the abstract machinery of number theory.
\end{abstract}
\textbf{Keywords:} Local four-square problem, quaternion rings, Smith normal form, residue class rings\\
\\
\textbf{2020 MSC:} Primary 11E25; Secondary 16S34.

\section{Introduction}
In this work, quaternions are considered, which are generally represented as an extension of the complex numbers in the form
\begin{align}\label{kvat}
q=w+ai+bj+ck,    
\end{align}
where $i,j,k$ are symbols resembling imaginary units and $w,a,b,c$ are scalars.

The first encounter with the number system now known as the quaternions was made by Olinde Rodrigues (1795--1851), who studied the displacement of a rigid body between two positions in his work published in 1840 \cite{Rodrigues1840}. While examining the displacement of a body between two successive rotations, he stated on page 409:
\begin{quotation}
    "Dans le cas du changement de l'ordre des rotations, le signe de $\cos L$ devient négatif..."
\end{quotation}
Although his work was purely geometric, an algebra of rotations was described in which the noncommutativity of rotations and the symmetry of the rotation axis played central roles. In modern terminology, the quaternions (\ref{kvat}) with $w=1$ were implicitly identified by Rodrigues, although the theory was not developed further from an algebraic point of view.

A systematic and algebraic approach was introduced by William Hamilton (1805--1865) in his work published in 1844 \cite{Hamilton1844}. The quaternions were presented in the form (\ref{kvat}), and the famous multiplication rules for the imaginary units were established:
\[
i^2=j^2=k^2=ijk=-1.
\]
The coefficients were taken to satisfy $w,a,b,c\in\mathbb{R}$. The name ``quaternions'' was introduced for these quadruples, and a completely new algebraic system was constructed in which the geometric observations of Rodrigues were given a precise algebraic formulation and broad applications.

From the perspective of the theory developed in this article, the next step was taken in the paper published by Rudolf Lipschitz (1832--1903) in 1886 \cite{Lipschitz1886}, where quaternions and their integer arithmetic were studied for the first time, that is, quaternions of the form (\ref{kvat}) with coefficients $w,a,b,c\in\mathbb{Z}$. However, the theory of Lipschitz did not lead to a complete arithmetic theory, since unique factorization does not hold in this system.

The theory of Lipschitz was completed by Adolf Hurwitz (1859--1919) in his work published in 1896 \cite{Hurwitz1896} (see also \cite{Hurwitz1919}). It was observed that if either $w,a,b,c\in \mathbb{Z}$ or $w,a,b,c\in \frac{1}{2}+\mathbb{Z}$, then a number system is obtained for which the Euclidean algorithm and unique factorization are valid. This number system is now known as the \textit{Hurwitz quaternions}.

The work of Lipschitz and Hurwitz was brought together by Leonard Dickson (1874--1954), and the theory was systematized and further developed; see \cite{Dickson1923b,Dickson1923a,Dickson1901, Dickson1924}. It was shown, among other things, that the factorization theorems for the Lipschitz and Hurwitz quaternions can be proved without the use of a division algorithm, thereby demonstrating that the Euclidean algorithm is not a necessary condition for unique factorization. The same method was also applied by Dickson to more general hypercomplex systems in which no division algorithm is available.

Modern ring theory became established during the 1920s and 1930s as part of the axiomatization of algebra, particularly through the work of Richard Dedekind, Emmy Noether, and Emil Artin. This development made it natural to consider quaternions (\ref{kvat}) whose coefficients $w,a,b,c$ belong to an arbitrary ring $R$. In this work, this line of research in modern algebra is continued by investigating the structure of quaternions and the norm map over general rings.\\

In number theory, particularly in the study of the integers, one of the classical problems of interest has been the so-called \textit{four square problem}, which asks whether, for a given positive integer $m\in\mathbb{Z}_+$, the equation
\begin{align}\label{NeljNelProb}
x_1^2 + x_2^2 + x_3^2 + x_4^2 = m
\end{align}
admits integer solutions $x_1,x_2,x_3,x_4\in\mathbb{Z}$. In 1770, it was proved by Joseph-Louis Lagrange (1736--1813) that, for every $m$, a solution quadruple $(x_1,x_2,x_3,x_4)$ exists (see, for example, \cite[Section 20.5]{HardyWright}). Later, in 1829, an explicit formula was given by Carl Gustav Jacob Jacobi (1804--1851) for the exact number of ordered integer solutions (including negative integers and zero) of equation (\ref{NeljNelProb}):
\[
r_4(m) = 8 \sum_{\substack{d \mid m \\ 4 \nmid d}}d,
\]
where the summation is taken over all positive divisors $d$ of $m$ that are not divisible by four (see again \cite[Section 20.5]{HardyWright}).

Quaternions were employed by Dickson in \cite{Dickson1924}, where a new descent proof of Lagrange's four-square theorem was presented that is simpler than the earlier proofs.

In this article, a local version of the four square problem (\ref{NeljNelProb}) is investigated. By means of quaternions and their matrix representation, it is proved that, for every $m\in\mathbb{Z}_{p^k}$, where $p$ is an odd prime and $k\ge 1$ is an integer, equation (\ref{NeljNelProb}) admits solutions $x_1,x_2,x_3,x_4\in\mathbb{Z}_{p^k}$, and explicit formulas are derived for the numbers of solutions $a_{p^k}(m)$. The method developed is based on the algebraic structure of the quaternion ring and the investigation of the norm map, and it provides a natural framework for the study of analogous local norm representation problems in other algebraic structures.

\section{Quaternions over Commutative Rings}
Classically, the quaternion algebra $\mathbb{H}$ is defined by imposing the multiplication rules
\begin{align}\label{laskus}
i^2=j^2=k^2=ijk=-1,
\end{align}
on the generators $\{1,i,j,k\}$, after which a general quaternion is represented in the form
\[
q=w+xi+yj+zk,
\]
where $w,x,y,z\in\mathbb{R}$ are assumed. Quaternions may be generalized as a $4$-dimensional algebra in essentially two ways. Either the multiplication rules (\ref{laskus}) may be generalized (see, for example, \cite{Yaglom}), or number systems other than the real numbers may be used for the coefficients $w,x,y,z$. In this paper, the latter approach is adopted.\\
\\
In this section, a comprehensive introduction to quaternions over rings is provided. Although the principal new results of this article are obtained for the ring $R=\mathbb{Z}_{p^k}$, the material in this section is presented for general commutative rings $R$ so that readers encountering this area of mathematics for the first time may obtain a broader understanding of the subject. Most of the material presented in this section is well known, and further information may be found in the references.

\subsection{The Quaternion Ring $\mathcal{H}_R$}
It is assumed that $R$ is a unital associative commutative ring, and
\[
\mathcal{H}_R=\{ q=w+xi+yj+zk\ :\ w,x,y,z\in R\},
\]
is defined. This set forms a ring over $R$ when the generators $\{1,i,j,k\}$ are assumed to satisfy the multiplication rules (\ref{laskus}). The ring $\mathcal{H}_R$ is referred to as the \textit{quaternion ring over $R$}. Natural examples include the classical quaternion algebra $\mathcal{H}_{\mathbb{R}}=\mathbb{H}$ and the complexified quaternion algebra $\mathcal{H}_{\mathbb{C}}=\mathbb{H}_{\mathbb{C}}$.\\
\\
It is well known that, for example, when $R=\mathbb{R}$, the quaternion ring $\mathcal{H}_R$ is a division ring. In general, however, the situation is somewhat more complicated. The (multiplicative) \textit{group of units} of the ring $R$ is defined by
\[
U(R)=\{ u\in R\ :\ \exists u^{-1}\in R,\ uu^{-1}=u^{-1}u=1\},
\]
which consists of all invertible elements of $R$. For a general element
\[
q=w+xi+yj+zk
\]
of $\mathcal{H}_R$, the \textit{conjugate} is defined by
\[
\overline{q}=w-xi-yj-zk.
\]
As in the classical quaternion algebra $\mathbb{H}$, the identities
\[
\overline{\overline{q}}=q\qquad\text{and}\qquad \overline{pq}=\overline{q}\; \overline{p}
\]
may be proved for all $p,q\in \mathcal{H}_R$. The norm map $N:\mathcal{H}_R\to R$ is defined by
\[
N(q)=q\overline{q}=\overline{q}q=w^2+x^2+y^2+z^2.
\]
The group of units of the quaternion ring $\mathcal{H}_R$ is then given by
\[
U(\mathcal{H}_R)=\{q\in \mathcal{H}_R\ :\ N(q)\in U(R)\},
\]
that is, if $q\in U(\mathcal{H}_R)$, then
\[
q^{-1}=N(q)^{-1}\overline{q}.
\]
A necessary and sufficient condition for $\mathcal{H}_R$ to be a division ring is given by the following result.

\begin{proposition}
    $\mathcal{H}_R$ is a division ring if and only if the following conditions are satisfied:
    \begin{enumerate}
        \item $R$ is a field,
        \item $N:\mathcal{H}_R\to R$ is anisotropic.
    \end{enumerate}
\end{proposition}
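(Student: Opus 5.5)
We prove the two implications separately, using only the conjugate and norm identities above and the description $U(\mathcal{H}_R)=\{q: N(q)\in U(R)\}$. Here ``anisotropic'' means that $N(q)=0$ forces $q=0$.

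For sufficiency, assume $R$ is a field and $N$ is anisotropic. For nonzero $q\in\mathcal{H}_R$ we then have $N(q)\neq 0$, and since $R$ is a field, $N(q)\in U(R)$. The element $q^{-1}=N(q)^{-1}\overline{q}$ satisfies
\[
q\cdot N(q)^{-1}\overline{q}=N(q)^{-1}q\overline{q}=1, \qquad N(q)^{-1}\overline{q}\cdot q=1,
\]
because scalars from $R$ are central in $\mathcal{H}_R$ and $q\overline{q}=\overline{q}q=N(q)$. The ring is nonzero provided $R$ is, and $R$ is a field. Hence every nonzero element is invertible, so $\mathcal{H}_R$ is a division ring.

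For necessity, assume $\mathcal{H}_R$ is a division ring. First we show that $R$ is a field. Identify $R$ with the central subring $R\cdot 1\subset\mathcal{H}_R$. Let $0\neq r\in R$; then $r$ is invertible in $\mathcal{H}_R$ with inverse $s=w+xi+yj+zk$. Comparing coefficients in $rs=1$, using that $\{1,i,j,k\}$ is a free basis, gives $rw=1$ and $rx=ry=rz=0$. So $r$ has the inverse $w\in R$. Also $1\neq 0$ in $R$, since $\mathcal{H}_R$ is nonzero, so $R$ is a field.

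Next we show that $N$ is anisotropic. Suppose $q\neq 0$ but $N(q)=0$. Then $q\overline{q}=0$ while $\overline{q}\neq 0$, since conjugation is a bijection. This exhibits a zero divisor, which cannot exist in a division ring. Hence $N(q)=0$ implies $q=0$.

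The main obstacle is a small technical point: one must extract $R$'s invertibility from inverses that a priori lie in $\mathcal{H}_R$ rather than in $R$. This is exactly what the coefficient comparison against the free basis $\{1,i,j,k\}$ handles. One could alternatively use $N(r)=r^2$: multiplicativity of $N$ gives $N(r)N(r^{-1})=1$, so $r^2$ is a unit and therefore $r$ is a unit. The argument should also state explicitly the convention that anisotropy means $N(q)=0\Rightarrow q=0$.
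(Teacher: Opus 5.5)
Your proof is correct and follows the same route as the paper: $R$ must be a field because it sits inside $\mathcal{H}_R$ as the real part, anisotropy is necessary because $q\overline{q}=0$ with $q\neq 0$ exhibits a zero divisor, and sufficiency comes from $q^{-1}=N(q)^{-1}\overline{q}$. Your coefficient comparison against the basis $\{1,i,j,k\}$ usefully supplies a step the paper leaves implicit, namely that the inverse in $\mathcal{H}_R$ of a nonzero $r\in R$ actually lies in $R$.
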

\begin{proof}
Since $R\hookrightarrow \mathcal{H}_R$ is naturally embedded as the real part, a necessary condition for $\mathcal{H}_R$ to be a division ring is that every $u\in R$ be invertible, that is, $R$ must be a field. If $q\in\mathcal{H}_R$ is a nonzero element satisfying $N(q)=q\overline{q}=0$, then $q$ is a zero divisor. A sufficient condition for $\mathcal{H}_R$ to be a division ring is that $N(q)=0$ imply $q=0$, that is, that the norm be anisotropic.
\end{proof}
By the preceding proposition, it follows that the quaternion ring
$\mathcal{H}_R$ is a division ring whenever $R=\mathbb{R}$ or any of its subfields
(for example, $R=\mathbb{Q}$), since the norm map $N$
is anisotropic in this case.\\
\\
On the other hand, over the field of complex numbers $\mathbb{C}$ the norm is not anisotropic, since, for example,
\[
i^2+1^2+0^2+0^2=0.
\]
Therefore, $\mathcal{H}_{\mathbb{C}}$ is not a division ring.\\
\\
If $\mathbb{F}_q$ is the finite field with $q$ elements of characteristic $p$ (that is, $q=p^r$), then, by Chevalley's theorem (see \cite{Chevalley}), the equation
\[
w^2+x^2+y^2+z^2=0
\]
always admits nontrivial solutions, since the polynomial has degree $2$ and there are $4$ variables. Consequently, there exists a nonzero element $q \in \mathcal{H}_{\mathbb{F}_q}$ satisfying $N(q)=0$, and hence $q$ is a zero divisor. Therefore, $\mathcal{H}_{\mathbb{F}_q}$ is not a division ring.

\subsection{The Jacobson Radical}
In particular, for commutative rings $R$ that are not fields, one of the fundamental tools for studying the structure of the ring is the so-called Jacobson radical.\\

An \textit{ideal} is a nonempty subset $I\subseteq R$ that is closed under subtraction and multiplication by elements of the ring:
\[
a,b\in I \implies a-b\in I,\qquad a\in I,\ r\in R \implies ra\in I.
\]

A \textit{maximal ideal} is a proper ideal $M\subsetneq R$ for which no proper ideal $I$ exists such that
\[
M\subsetneq I\subsetneq R.
\]
A classical characterization states that $M$ is a maximal ideal if and only if the quotient ring $R/M$ is a field; see, for example, \cite[Proposition 12, p.~254]{DummitFoote2004}. The set of maximal ideals of $R$ is denoted by $\operatorname{Max}(R)$.\\

The \textit{Jacobson radical} is the ideal defined as the intersection of all maximal ideals of the ring:
\[
J(R) = \bigcap_{M \in \operatorname{Max}(R)} M.
\]
If $R$ is a field, then its only ideals are $\{0\}$ and $R$. Since $R/\{0\}\cong R$, the ideal $\{0\}$ is maximal, and hence the Jacobson radical is trivial, namely $J(R)=\{0\}$. Therefore, the Jacobson radical is particularly useful when $R$ is not a field, since in the field case it reduces to the zero ideal.\\

A ring $R$ is said to be \textit{local} if it possesses a unique maximal ideal $\operatorname{Max}(R)=\{M\}$, that is, $J(R)=M$. In particular, for a local ring the quotient ring $R/J(R)$ is always a field. Local rings are especially useful in the theory of quaternion rings.

\begin{proposition}\label{Nollajakat}
    Let $R$ be a local ring. Then
    \[
    U(R)= R\setminus J(R).
    \]
\end{proposition}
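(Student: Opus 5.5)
We prove the two inclusions separately, using the facts that $R$ has the unique maximal ideal $M$, so $J(R)=M$, and that every proper ideal of a unital ring lies in some maximal ideal (Zorn's lemma, or Krull's theorem).

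\emph{First, $U(R)\subseteq R\setminus J(R)$.} Let $u\in U(R)$ and suppose, for contradiction, that $u\in J(R)=M$. Since $M$ is an ideal, $1=u^{-1}u\in M$. Then $r=r\cdot 1\in M$ for every $r\in R$, so $M=R$. This contradicts the requirement that a maximal ideal be proper, so $u\notin J(R)$.

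\emph{Second, $R\setminus J(R)\subseteq U(R)$.} Let $a\in R\setminus M$ and consider the principal ideal $aR=\{ar : r\in R\}$. Suppose $a$ were not a unit. Since $R$ is commutative, $1\notin aR$, so $aR$ is a proper ideal. It is therefore contained in some maximal ideal. By locality that ideal must be $M$, and then $a=a\cdot 1\in aR\subseteq M$, a contradiction. Hence $1\in aR$, so $ar=1$ for some $r\in R$. By commutativity $ra=1$ as well, and thus $a\in U(R)$.

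The only step that is not purely formal is the fact that every proper ideal lies in a maximal ideal. In general this requires Zorn's lemma: the proper ideals containing $aR$, ordered by inclusion, have the property that every chain has an upper bound, namely its union, which is still proper because it omits $1$. For the rings used later, $\mathbb{Z}_{p^k}$ and its relatives, $R$ is finite, so the existence of a maximal ideal above $aR$ is immediate. The paper presumably treats this as standard.

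Combining the two inclusions gives $U(R)=R\setminus J(R)$.
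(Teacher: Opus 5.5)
Your proof is correct and follows the same route as the paper: a unit lying in $J(R)$ would force $1\in J(R)$, and for $a\notin J(R)$ the principal ideal $aR$ cannot be proper, since it would then lie in the unique maximal ideal. Your remarks on Zorn's lemma, and on commutativity turning $ar=1$ into a two-sided inverse, only make explicit what the paper leaves implicit.
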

\begin{proof}
If $R$ is a field, then $U(R)=R\setminus\{0\}$ and $J(R)=\{0\}$, so the result holds. Assume that $J(R)\neq \{0\}$.
Let $u\in U(R)$ and suppose that $u$ belongs to the radical $J(R)$. By assumption, there exists $u^{-1}\in R$ such that $uu^{-1}=1\in J(R)$, since $J(R)$ is an ideal. It follows that $r1=r\in J(R)$ for every $r\in R$, and hence $J(R)=R$. This is a contradiction, since the Jacobson radical is a proper ideal. Therefore,
\[
U(R)\cap  J(R)=\emptyset.
\]
Now assume that $a\notin J(R)$. Consider the ideal $\langle a\rangle =\{ ar : r\in R\}$. If this ideal were proper, then it would be contained in some maximal ideal, and hence $\langle a\rangle\subseteq J(R)$, a contradiction. Therefore, $\langle a\rangle$ is not a proper ideal, so $\langle a\rangle=R$. Consequently, $1\in \langle a\rangle$, that is, $ab=1$ for some $b\in R$, and hence $a\in U(R)$.
\end{proof}

\section{Matrix Representation}
Let the ring of \(2\times2\) matrices over the ring \(R\) be denoted by
\[
M_2(R)=
\left\{
\begin{pmatrix}
a&b\\
c&d
\end{pmatrix}
:\;
a,b,c,d\in R
\right\}.
\]
The question of when $\mathcal{H}_R$ can be represented by means of \(2\times2\) matrices over $R$ is considered next, that is, when $\mathcal{H}_R$ is isomorphic to $M_2(R)$. Naturally, the result depends on the structure of the ring $R$. For the real quaternions, the result does not hold:
\[
\mathcal H_{\mathbb R}\not\cong M_2(\mathbb R),
\]
since $\mathcal H_{\mathbb R}$ is a division ring. For the complexified quaternions, the following isomorphism holds:
\[
\mathcal H_{\mathbb C} \cong M_2(\mathbb C).
\]
The following theorem provides a sufficient condition for the existence of such an isomorphism.

\begin{theorem}\label{Matest}
Let \(R\) be a commutative ring such that \(2\in U(R)\). If the equation
\[
1+\alpha^2+\beta^2=0,
\]
admits a solution in the ring $R$, then
\[
\mathcal H_R\cong M_2(R).
\]
\end{theorem}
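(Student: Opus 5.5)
The plan is to write down an explicit $R$-linear map $\phi:\mathcal H_R\to M_2(R)$ by sending $1$ to the identity matrix $I$ and $i,j,k$ to suitable matrices $I_1,I_2,I_3$. These matrices will be built from a solution $(\alpha,\beta)$ of $1+\alpha^2+\beta^2=0$. I then check three things: that $\phi$ respects the defining relations (\ref{laskus}), that it is bijective, and hence that it is a ring isomorphism.

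\emph{Choice of images.} I will take trace-zero matrices
\[
I_1=\begin{pmatrix}0&1\\-1&0\end{pmatrix},\qquad
I_2=\begin{pmatrix}\alpha&\beta\\ \beta&-\alpha\end{pmatrix},\qquad
I_3=I_1I_2=\begin{pmatrix}\beta&-\alpha\\-\alpha&-\beta\end{pmatrix}.
\]
A direct computation gives $I_1^2=-I$. Next, $I_2^2=(\alpha^2+\beta^2)I=-I$ by the hypothesis $1+\alpha^2+\beta^2=0$. One also checks $I_2I_1=-I_1I_2$. These relations imply $I_3^2=I_1I_2I_1I_2=-I_1^2I_2^2=-I$ and $I_1I_2I_3=I_3^2=-I$. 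Hence all relations (\ref{laskus}) hold. Since $\mathcal H_R$ is the free $R$-module on $1,i,j,k$ with multiplication determined by these relations and $R$-bilinearity, the linear extension $\phi$ is a unital ring homomorphism. Note that $R$ is central in both rings.

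\emph{Bijectivity.} This is the main step, and it is where $2\in U(R)$ is needed. Both sides are free $R$-modules of rank $4$. It therefore suffices to show that $I,I_1,I_2,I_3$ form an $R$-basis of $M_2(R)$, that is, that the $4\times4$ coefficient matrix expressing them in the standard basis $E_{11},E_{12},E_{21},E_{22}$ is invertible over $R$. Its determinant should come out as a unit multiple of a power of $2$ times $(\alpha^2+\beta^2)=-1$, so I expect something like $\pm 4$, which is a unit.

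A cleaner way to see surjectivity is to solve explicitly for the matrix units. First, $E_{11}+E_{22}=I$. Second, $E_{12}-E_{21}=I_1$. Third, $\alpha I_2+\beta I_3=(\alpha^2+\beta^2)$-type combinations isolate $\operatorname{diag}(1,-1)$ and $E_{12}+E_{21}$ up to the unit $-1$. Dividing by $2$ then recovers each $E_{ij}$. Injectivity follows because a surjective $R$-linear endomorphism between free modules of the same finite rank over a commutative ring is an isomorphism. Alternatively, the determinant computation gives injectivity and surjectivity at once.

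The only real obstacle is bookkeeping: verifying anticommutation and the determinant. I would present the determinant (or the explicit inverse formulas for the $E_{ij}$) as the place where both hypotheses, $2\in U(R)$ and $1+\alpha^2+\beta^2=0$, enter.
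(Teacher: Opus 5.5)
Your argument is correct. The construction is the paper's: you send $i\mapsto\begin{pmatrix}0&1\\-1&0\end{pmatrix}$ and $j\mapsto\begin{pmatrix}\alpha&\beta\\\beta&-\alpha\end{pmatrix}$, and the paper does the reverse, which is immaterial. The difference lies in how bijectivity is obtained.

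The paper proves injectivity, i.e.\ linear independence of $I,A,B,AB$, by solving the homogeneous system. It then deduces surjectivity from the assertion that four linearly independent elements of a free module of rank $4$ form a basis. That assertion is false over general commutative rings. For example, over $\mathbb{Z}[\tfrac12,\sqrt{-1}]$, which satisfies the hypotheses of the theorem, $3E_{11},E_{12},E_{21},E_{22}$ are independent but do not span. So the paper's argument as written only closes when, e.g., $R$ is finite (as for $\mathbb{Z}_{p^k}$), where injectivity gives bijectivity by counting.

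You go in the opposite direction. The identities $\alpha I_2+\beta I_3=-\operatorname{diag}(1,-1)$ and $\beta I_2-\alpha I_3=-(E_{12}+E_{21})$, together with $I$, $I_1=E_{12}-E_{21}$ and division by $2$, put every $E_{ij}$ in the image. Injectivity then follows from the standard fact that a surjective $R$-linear map $R^4\to R^4$ is an isomorphism. Equivalently, the coefficient determinant (rows $I,I_1,I_2,I_3$ against $E_{11},E_{12},E_{21},E_{22}$) is exactly $-4(\alpha^2+\beta^2)=4\in U(R)$, so you can replace the hedge ``something like $\pm4$'' with this value.

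Your route therefore proves the theorem in the full generality stated. It also makes transparent that the two hypotheses enter precisely as the unit factors $2$ and $\alpha^2+\beta^2=-1$ of that determinant.
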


\begin{proof}
The matrices
\[
I=\begin{pmatrix}1&0\\0&1\end{pmatrix},\qquad
A=\begin{pmatrix}\alpha&\beta\\ \beta&-\alpha\end{pmatrix},\qquad
B=\begin{pmatrix}0&1\\-1&0\end{pmatrix}
\]
are defined.
From the condition $1+\alpha^2+\beta^2=0$, it follows that
\[
A^2=-I.
\]
Moreover,
\[
B^2=-I,\qquad AB=-BA.
\]
Since the matrices $A$ and $B$ satisfy the defining relations of the quaternions, the mapping
\[
1\mapsto I,\qquad i\mapsto A,\qquad j\mapsto B
\]
extends uniquely to an $R$-algebra homomorphism
\[
\varphi:\mathcal H_R\to M_2(R).
\]
Consequently,
\[
\varphi(k)=\varphi(ij)=AB.
\]
It remains to be shown that $\varphi$ is bijective. Since both $\mathcal H_R$ and $M_2(R)$ are free $R$-modules of rank $4$, it suffices to show that the images
\[
\varphi(1)=I,\quad \varphi(i)=A,\quad \varphi(j)=B,\quad \varphi(k)=AB
\]
are linearly independent over $R$.
Let $a,b,c,d\in R$ satisfy
\[
aI+bA+cB+dAB=0.
\]
By substituting the matrices and comparing the matrix entries, the system
\[
\begin{aligned}
a+b\alpha-d\beta&=0,\\
a-b\alpha+d\beta&=0,\\
b\beta+c+d\alpha&=0,\\
b\beta-c+d\alpha&=0
\end{aligned}
\]
is obtained.
By adding and subtracting these equations, it follows that $2a=0$ and $2c=0$. Since $2\in U(R)$, it follows that $a=c=0$. The remaining equations are
\[
b\alpha=d\beta,\qquad b\beta=-d\alpha.
\]
These may be written in matrix form as
\[
\begin{pmatrix}
\alpha & -\beta\\
\beta & \alpha
\end{pmatrix}
\begin{pmatrix}
b\\ d
\end{pmatrix}
=
\begin{pmatrix}
0\\0
\end{pmatrix}.
\]
Since
\[
\det
\begin{pmatrix}
\alpha&-\beta\\
\beta&\alpha
\end{pmatrix}
=
\alpha^2+\beta^2
=
-1,
\]
and $-1$ is always a unit, the matrix is invertible. Hence $b=d=0$. Therefore $a=b=c=d=0$, and the images are linearly independent. Consequently, the homomorphism is injective, since its kernel is trivial.

Since $M_2(R)$ is a free $R$-module of rank $4$, every set of four linearly independent elements forms a basis. Therefore, the images $\varphi(1),\varphi(i),\varphi(j),\varphi(k)$ span the entire module $M_2(R)$, and hence $\varphi$ is surjective.
\end{proof}
The preceding theorem establishes the isomorphism not only in the complex case $R=\mathbb{C}$ but also, for example, for finite fields $\mathbb{F}_q$, where, by the well-known theorem of Mann (see \cite{Mann}), the equation
\[
-1=\alpha^2+\beta^2
\]
always admits a solution.\\
\\
The result in \cite[Theorem 3.10]{Ghosseiri19} is a special case of Theorem \ref{Matest}. There it is assumed that $R$ is an algebra over a field $F$ of odd characteristic. The theorem above shows that, for the existence of the isomorphism, it is sufficient that the equation
\[
1+\alpha^2+\beta^2=0
\]
admits a solution in the ring $R$, and that no additional assumptions concerning the algebraic structure are required.

\begin{proposition}\label{Normi}
Assume that $R$ is a ring satisfying the assumptions of Theorem \ref{Matest}. Then
\[
N(q)=\det(\varphi(q))
\]
for every \(q\in\mathcal H_R\).
\end{proposition}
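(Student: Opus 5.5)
The plan is to compute $\varphi(q)$ explicitly for a general $q=w+xi+yj+zk$ and then evaluate its determinant directly. By linearity of $\varphi$,
\[
\varphi(q)=wI+xA+yB+zAB .
\]
The first step is to compute the product $AB$ with $A=\begin{pmatrix}\alpha&\beta\\ \beta&-\alpha\end{pmatrix}$ and $B=\begin{pmatrix}0&1\\-1&0\end{pmatrix}$, which gives
\[
AB=\begin{pmatrix}-\beta&\alpha\\ \alpha&\beta\end{pmatrix}.
\]
Substituting, I expect
\[
\varphi(q)=\begin{pmatrix} w+x\alpha-z\beta & x\beta+y+z\alpha\\ x\beta-y+z\alpha & w-x\alpha+z\beta\end{pmatrix},
\]
which matches the entries already appearing in the linear system in the proof of Theorem \ref{Matest}.

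Next I take the determinant. Writing $u=x\alpha-z\beta$ and $v=x\beta+z\alpha$, the matrix is $\begin{pmatrix} w+u & v+y\\ v-y & w-u\end{pmatrix}$. Hence
\[
\det\varphi(q)=w^2-u^2-v^2+y^2 .
\]
The identity $u^2+v^2=(x^2+z^2)(\alpha^2+\beta^2)$ holds because the cross terms $\pm 2xz\alpha\beta$ cancel. By the hypothesis $1+\alpha^2+\beta^2=0$ this equals $-(x^2+z^2)$. Therefore
\[
\det\varphi(q)=w^2+x^2+y^2+z^2=N(q).
\]

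A more conceptual route is also available and serves as a cross-check. Since $\varphi$ is an $R$-algebra homomorphism, I would verify that $\varphi(\overline q)$ is the adjugate of $\varphi(q)$. This reduces to checking that $\operatorname{tr}A=\operatorname{tr}B=\operatorname{tr}AB=0$, together with Cayley--Hamilton for $2\times 2$ matrices. Then
\[
N(q)I=\varphi(q\overline q)=\varphi(q)\operatorname{adj}\varphi(q)=\det\varphi(q)\,I .
\]
Comparing entries gives the claim. The main obstacle is only bookkeeping: getting the sign conventions in $AB$ right and making sure the cross terms cancel. The essential point is that the hypothesis $\alpha^2+\beta^2=-1$ is used exactly once, to convert $-(u^2+v^2)$ into $+(x^2+z^2)$. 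Note that $2\in U(R)$ is not needed for this identity.
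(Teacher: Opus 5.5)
Your proof is correct and follows the paper's argument essentially verbatim: you write $\varphi(q)=wI+xA+yB+zAB$ explicitly, obtain the same matrix, and use $(x\alpha-z\beta)^2+(x\beta+z\alpha)^2=(\alpha^2+\beta^2)(x^2+z^2)=-(x^2+z^2)$, exactly as the paper does. Your adjugate cross-check, based on $\varphi(\overline{q})=\operatorname{tr}(\varphi(q))I-\varphi(q)$, is also valid and explains the identity without entrywise computation, but it is not needed for the proof.
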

\begin{proof}
Let
\[
\varphi:\mathcal H_R\longrightarrow M_2(R)
\]
be the isomorphism constructed in Theorem \ref{Matest}. Let \(q = w + xi + yj + zk \in \mathcal H_R\). Then
\[
\varphi(q) =
\begin{pmatrix}
w + x\alpha - z\beta & x\beta + y + z\alpha \\
x\beta - y + z\alpha & w - x\alpha + z\beta
\end{pmatrix}.
\]
The determinant is computed as
\begin{align*}
    \det(\varphi(q))
    &=(w + x\alpha - z\beta)(w - x\alpha + z\beta)-(x\beta - y + z\alpha)(x\beta + y + z\alpha)\\
    &=w^2+y^2-\big((x\alpha - z\beta)^2+(x\beta   + z\alpha)^2\big)\\
    &=w^2+y^2-(\alpha^2+\beta^2)(x^2+z^2).
\end{align*}
Since $\alpha^2+\beta^2=-1$, it follows that
\[
\det(\varphi(q))=w^2+x^2+y^2+z^2=N(q).
\]
\end{proof}

\begin{proposition}\label{RadikProp}
Assume that $R$ is a ring satisfying the assumptions of Theorem \ref{Matest}. Then
   \[
J(\mathcal H_R) \cong M_2(J(R)).
\]
\end{proposition}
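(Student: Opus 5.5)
Since Theorem \ref{Matest} gives a ring isomorphism $\varphi:\mathcal H_R\to M_2(R)$, and the Jacobson radical is defined intrinsically by the ring structure (as the intersection of all maximal left ideals, equivalently the set of $x$ such that $1-rx$ is a unit for all $r$), any ring isomorphism carries the radical onto the radical. So I will first note that $\varphi(J(\mathcal H_R))=J(M_2(R))$, and the statement reduces to the classical identity
\[
J(M_2(R))=M_2(J(R)).
\]
One remark on definitions: for the noncommutative ring $\mathcal H_R$ I will use the standard definition (intersection of maximal left ideals) together with the characterization $x\in J(S)\iff 1-rx\in U(S)$ for all $r\in S$, since the paper only defined $J$ for commutative rings.

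For the inclusion $M_2(J(R))\subseteq J(M_2(R))$, I take $X\in M_2(J(R))$ and arbitrary $Y\in M_2(R)$. Then $YX\in M_2(J(R))$, so $\det(I-YX)=1-\operatorname{tr}(YX)+\det(YX)\equiv 1 \pmod{J(R)}$. An element of the form $1+j$ with $j\in J(R)$ is a unit (it lies in no maximal ideal). Hence $\det(I-YX)\in U(R)$, and the adjugate formula shows that $I-YX$ is invertible in $M_2(R)$. Therefore $X\in J(M_2(R))$.

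For the reverse inclusion, take $X=(x_{ab})\in J(M_2(R))$ and fix an entry $x_{ab}$ and any $r\in R$. I want $1-rx_{ab}\in U(R)$. Using the matrix units, $E_{1a}XE_{b1}=x_{ab}E_{11}$ belongs to the ideal $J(M_2(R))$, so $x_{ab}E_{11}+x_{ab}E_{22}=x_{ab}I$ is in $J(M_2(R))$ too (using $E_{21}(\cdot)E_{12}$). Then $I-r x_{ab}I=(1-rx_{ab})I$ is invertible, and taking determinants gives $(1-rx_{ab})^2\in U(R)$. It follows that $1-rx_{ab}\in U(R)$, so $x_{ab}\in J(R)$, since in a commutative ring $J(R)=\{x: 1-rx\in U(R)\ \forall r\}$.

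The main obstacle is justifying the two standard facts used about the radical: the unit characterization of $J$ (for both $R$ and the noncommutative $M_2(R)$), and the fact that $J$ is a two-sided ideal. I will cite these from a standard ring theory text rather than reprove them. Once they are granted, the argument above is direct and uses nothing beyond Theorem \ref{Matest}.
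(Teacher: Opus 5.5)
Your proof is correct and has the same structure as the paper's: the radical is intrinsic to the ring structure, so the isomorphism $\varphi$ of Theorem~\ref{Matest} gives $\varphi(J(\mathcal H_R))=J(M_2(R))$, and the identity $J(M_2(R))=M_2(J(R))$ then finishes the argument. The only difference is that the paper cites this last identity from Lam, whereas you prove it directly. You use the unit characterization of the radical together with $\det(I-M)=1-\operatorname{tr}(M)+\det(M)$ and the adjugate for one inclusion, and matrix units for the other; both inclusions are argued correctly.
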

\begin{proof}
Let $\varphi:\mathcal{H}_R\to M_2(R)$ be the isomorphism given in Theorem \ref{Matest}. Let $M$ be a maximal left (or right) ideal of the ring $\mathcal{H}_R$. It follows directly from the definition of an ideal that $M':=\varphi(M)$ is a maximal left (or right) ideal of the ring $M_2(R)$. Hence,
 \[
\varphi\big(J(\mathcal{H}_R)\big)=\varphi\Big(\bigcap_{M \in \operatorname{Max}(\mathcal{H}_R)} M\Big)=\bigcap_{M' \in \operatorname{Max}(M_2(R))} M'=J(M_2(R)).
 \]
Therefore,
\[
J(\mathcal H_R) \cong J(M_2(R)).
\]
The Jacobson radical of a matrix ring satisfies
\[
J(M_2(R)) = M_2(J(R)),
\]
see \cite[Example (7), pp.~57--58]{Lam}.
\end{proof}

\subsection{Structure of the Unit Group}

The group
\[
\mathrm{GL}_2(R)=\{ A\in M_2(R) : \exists B\in M_2(R) \text{ such that } AB=BA=I_2 \},
\]
is defined, where $R$ is a ring.

\begin{proposition}\label{EksJon}
Let \(R\) be a commutative ring satisfying \(\mathcal H_R \cong M_2(R)\). Then there exists a short exact sequence
\[
1 \longrightarrow 1 + J(\mathcal H_R) \longrightarrow U(\mathcal H_R) \longrightarrow \mathrm{GL}_2(R/J(R)) \longrightarrow 1.
\]
\end{proposition}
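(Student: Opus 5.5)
The plan is to transport everything through the isomorphism $\varphi:\mathcal H_R\to M_2(R)$ and work in the matrix ring. A ring isomorphism restricts to a group isomorphism $U(\mathcal H_R)\cong U(M_2(R))=\mathrm{GL}_2(R)$. It also carries the Jacobson radical onto the Jacobson radical, by the argument in the proof of Proposition \ref{RadikProp}, which uses only that $\varphi$ is a ring isomorphism. Hence $\varphi(J(\mathcal H_R))=J(M_2(R))=M_2(J(R))$ and $\varphi(1+J(\mathcal H_R))=I+M_2(J(R))$. So it suffices to prove that
\[
1\longrightarrow I+M_2(J(R))\longrightarrow \mathrm{GL}_2(R)\xrightarrow{\ \pi\ }\mathrm{GL}_2(R/J(R))\longrightarrow 1
\]
is exact, where $\pi$ is entrywise reduction modulo $J(R)$.

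\textbf{Step 1: a preliminary fact.} Every element of $1+J(R)$ is a unit of $R$. Suppose $1+j$ with $j\in J(R)$ were not a unit. Then $\langle 1+j\rangle$ is a proper ideal, so it lies in some maximal ideal $M$. Since $j\in J(R)\subseteq M$, this gives $1\in M$, a contradiction. As a consequence, an element $x\in R$ is a unit as soon as its image $\bar x\in R/J(R)$ is a unit. Indeed, choose $y$ with $\bar x\bar y=1$; then $xy=1+j$ is a unit, so $x$ is a unit.

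\textbf{Step 2: $\pi$ is a homomorphism with the claimed kernel.} Reduction $R\to R/J(R)$ is a ring homomorphism, so it induces a ring homomorphism $M_2(R)\to M_2(R/J(R))$. This map sends invertible matrices to invertible matrices and respects products, so $\pi$ is a group homomorphism. Its kernel consists of the $A\in\mathrm{GL}_2(R)$ with $A\equiv I \pmod{J(R)}$, i.e. $A\in I+M_2(J(R))$. Conversely, every matrix $I+X$ with $X\in M_2(J(R))$ is invertible. Its determinant is $1+(\text{element of }J(R))$, which is a unit by Step 1, and the adjugate formula then gives the inverse. Hence $\ker\pi=I+M_2(J(R))$; in particular this set is a subgroup of $\mathrm{GL}_2(R)$, and the map on the left is inclusion.

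\textbf{Step 3: surjectivity of $\pi$.} I expect this to be the only step needing care. Given $\bar A\in\mathrm{GL}_2(R/J(R))$, lift each entry arbitrarily to obtain $A\in M_2(R)$. Then $\overline{\det A}=\det\bar A$ is a unit in $R/J(R)$. By Step 1, $\det A\in U(R)$, so $A=(\det A)^{-1}\operatorname{adj}(A)^{-1}$-style inversion applies: $A^{-1}=(\det A)^{-1}\operatorname{adj}(A)$, and $A\in\mathrm{GL}_2(R)$ with $\pi(A)=\bar A$.

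Finally, pulling this exact sequence back through $\varphi$ yields the sequence stated in Proposition \ref{EksJon}.
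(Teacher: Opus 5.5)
Your proof is correct, but it lifts units at a different point than the paper does. The paper first proves, for an arbitrary (noncommutative) ring $S$, that $1\to 1+J(S)\to U(S)\to U(S/J(S))\to 1$ is exact. That step relies on the cited fact (Lam, Cor.~4.5) that $1+J(S)$ consists of units in any ring, together with some left/right-inverse bookkeeping. The paper then takes $S=\mathcal H_R$ and identifies $\mathcal H_R/J(\mathcal H_R)\cong M_2(R)/M_2(J(R))\cong M_2(R/J(R))$.

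You instead transport to $M_2(R)$ first and work with the determinant and adjugate over the commutative ring $R$. As a result, the only radical fact you need is $1+J(R)\subseteq U(R)$ for commutative $R$, which you prove directly from maximal ideals. Your computation $\det(I+X)\in 1+J(R)$ is exactly the matrix-level substitute for Lam's corollary. Defining $\pi$ by entrywise reduction also makes the quotient isomorphism $M_2(R)/M_2(J(R))\cong M_2(R/J(R))$ unnecessary.

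The trade-off: the paper's route gives a more general, conceptual lemma valid for any ring, while yours is more elementary and self-contained. Both still rest on the same external fact $J(M_2(R))=M_2(J(R))$, which you need to identify your kernel $I+M_2(J(R))$ with $\varphi(1+J(\mathcal H_R))$.

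One cosmetic point: the first inversion formula written in your Step~3 is garbled. Only $A^{-1}=(\det A)^{-1}\operatorname{adj}(A)$ is needed there.
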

The following lemma is proved first.

\begin{lemma}
Let $S$ be an arbitrary ring. Then there exists a short exact sequence
\[
1 \longrightarrow 1+J(S) \longrightarrow U(S) \longrightarrow U(S/J(S)) \longrightarrow 1,
\]
where the maps are the natural inclusion $1+J(S)\hookrightarrow U(S)$ and the quotient homomorphism $U(S)\to U(S/J(S))$.
\end{lemma}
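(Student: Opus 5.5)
The plan is to build the sequence from a single standard fact about the radical: for every $j\in J(S)$ the element $1-j$ is a unit of $S$. Once this is known, exactness at each place follows formally. Since $S$ need not be commutative, I will read $J(S)$ as the intersection of all maximal left ideals. This is the standard definition, and it agrees with the two-sided description used in the paper. In particular $J(S)$ is a two-sided ideal, so $S/J(S)$ is a ring and the quotient map $\pi:S\to S/J(S)$ is a ring homomorphism.

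\emph{Step 1 (key fact).} Let $j\in J(S)$ and suppose $1-j$ has no left inverse. Then $S(1-j)$ is a proper left ideal. By Zorn's lemma it lies in some maximal left ideal $M$. Since $j\in J(S)\subseteq M$, we get $1=(1-j)+j\in M$, which is a contradiction. Hence there is $u$ with $u(1-j)=1$. Then $u=1+uj$, and $uj\in J(S)$ because $J(S)$ is a left ideal. So $u=1-(-uj)$ has the same form, and by the same argument $u$ has a left inverse. An element with both a left inverse and a right inverse (here $1-j$) is a unit. Thus $u$ is a unit, and so is $1-j=u^{-1}$. Replacing $j$ by $-j$ shows that $1+J(S)\subseteq U(S)$. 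I expect this step to be the only real obstacle, because of the left/right bookkeeping in the noncommutative setting.

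\emph{Step 2 (the map and its kernel).} The ring homomorphism $\pi$ sends units to units, so it restricts to a group homomorphism $U(S)\to U(S/J(S))$. A unit $u$ lies in the kernel exactly when $\pi(u)=\pi(1)$, that is, when $u-1\in J(S)$, that is, when $u\in 1+J(S)$. By Step 1 all of $1+J(S)$ consists of units. Hence $1+J(S)$ is precisely the kernel, so it is a normal subgroup of $U(S)$. This gives injectivity of the inclusion and exactness in the middle.

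\emph{Step 3 (surjectivity).} Let $\pi(x)$ be a unit in $S/J(S)$ with inverse $\pi(y)$. Then $xy=1-j_1$ and $yx=1-j_2$ with $j_1,j_2\in J(S)$. By Step 1 both right-hand sides are units. Therefore $x\,(y(1-j_1)^{-1})=1$ and $((1-j_2)^{-1}y)\,x=1$, so $x$ has a right inverse and a left inverse. Hence $x\in U(S)$ and $\pi(x)$ lies in the image. This completes the short exact sequence.
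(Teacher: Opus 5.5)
Your proof is correct and follows the same route as the paper: the reduction map $U(S)\to U(S/J(S))$ has kernel $\{u\in U(S): u-1\in J(S)\}=1+J(S)$, and surjectivity comes from $xy,\,yx\in 1+J(S)$, which gives $x$ a right and a left inverse. The only difference is that you prove the key fact $1+J(S)\subseteq U(S)$ yourself, via maximal left ideals and Zorn's lemma, while the paper cites Lam for it and uses it only tacitly when identifying the kernel with all of $1+J(S)$. One caution: reading $J(S)$ as the intersection of maximal left ideals is essential here, not just convenient. For noncommutative $S$ the intersection of maximal two-sided ideals can be strictly larger; for $\dim V=\aleph_0$ the finite-rank operators form the unique maximal ideal of $\mathrm{End}_k(V)$, yet $1-e$ is not a unit for a rank-one idempotent $e$. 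So your remark that the two descriptions agree should be understood as the standard left/right symmetry of $J(S)$, which is what makes it two-sided.
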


\begin{proof}
Let $\psi: U(S) \to U(S/J(S))$ be the map defined by $\psi(u)=u+J(S)$, so that $\psi(u)\psi(v)=\psi(uv)$ for all $u,v\in U(S)$. This map is well defined, since the image of the group of units is contained in the group of units of the quotient ring. It remains to be shown that the image is precisely the group of units of the quotient ring.\\

Let $\widetilde{u} \in U(S/J(S))$. Then there exists $\widetilde{v}$ such that
\[
\widetilde{u} \widetilde{v} = \widetilde{1}.
\]
Hence,
\[
uv - 1 \in J(S),
\]
that is,
\[
uv = 1 + x
\]
for some $x\in J(S)$. By \cite[Corollary 4.5]{Lam}, it is known that $uv=1+x\in U(S)$. Since $uv$ is a unit, $u$ possesses a right inverse. Similarly, a left inverse is obtained. Therefore, $\psi$ is surjective.\\

Now suppose that $u\in \ker{\psi}$, that is, $\psi(u)=1+J(S)$, or equivalently, $u-1\in J(S)$. Hence,
\[
u=1+(u-1)\in 1+J(S),
\]
and therefore
\[
\ker{\psi} = \{ u\in U(S) : u-1 \in J(S) \} = 1+J(S),
\]
which, as the kernel of a group homomorphism, is a subgroup of $U(S)$.\\

The preceding arguments establish the short exact sequence
\[
1 \longrightarrow 1+J(S) \longrightarrow U(S) \longrightarrow U(S/J(S)) \longrightarrow 1,
\]
where the first map is the inclusion and the second is the quotient homomorphism.
\end{proof}

\begin{proof}[Proof of Proposition \ref{EksJon}]
Let $R$ be a commutative ring satisfying $\mathcal H_R \cong M_2(R)$. In the preceding lemma, let $S=\mathcal H_R$. Then the short exact sequence
\[
1 \longrightarrow 1+J(\mathcal H_R) \longrightarrow U(\mathcal H_R) \longrightarrow U(\mathcal H_R/J(\mathcal H_R)) \longrightarrow 1
\]
is obtained. Let $\varphi:\mathcal H_R \to M_2(R)$ be a ring isomorphism. By Proposition \ref{RadikProp} and its proof, the Jacobson radical is preserved under the isomorphism, that is,
\[
\varphi(J(\mathcal H_R)) = J(M_2(R)) = M_2(J(R)).
\]
Consequently, for every $u\in \mathcal{H}_R$,
\[
\varphi(u+J(\mathcal H_R))=\varphi(u)+M_2(J(R)),
\]
and hence
\[
\mathcal{H}_R/J(\mathcal H_R) \cong M_2(R)/M_2(J(R)).
\]
By the classical isomorphism theorem for matrix rings (see, for example, \cite[Example (6), p.~244]{DummitFoote2004}), it follows that
\[
M_2(R)/M_2(J(R)) \cong M_2(R/J(R)).
\]
Therefore,
\[
U(\mathcal H_R/J(\mathcal H_R)) \cong U(M_2(R/J(R))) = \mathrm{GL}_2(R/J(R)).
\]
\end{proof}

\section{The residue class ring modulo $p^k$}
The remainder of this article is devoted to the ring
\[
R = \mathbb{Z}_{p^k} = \mathbb{Z}/p^k\mathbb{Z},
\]
where \(p\) is an odd prime and \(k \ge 1\). For $a,b\in \mathbb{Z}_{p^k}$, the addition and multiplication are induced by the
congruence relations
\[
ab=c\quad\text{and}\quad a+b=d,
\]
where $ab \equiv c \pmod{p^k}$ and $a+b \equiv d \pmod{p^k}$. In particular, $p^t=0$ in the ring $\mathbb{Z}_{p^k}$ whenever $t=k,k+1,\ldots$, since
\begin{align}\label{potemzo}
    p^t \equiv 0 \pmod{p^k}
\end{align}
because $p^k\mid p^t$.

\begin{proposition}\label{YksRom}
Let \(p\) be a prime and \(k \ge 1\). Then
\begin{enumerate}
    \item[(a)] $U(\mathbb{Z}_{p^k}) = \{ a \in \mathbb{Z}_{p^k} : \gcd(a, p) = 1 \}$,
    \item[(b)] $|U(\mathbb{Z}_{p^k})| = p^{k-1}(p-1)$,
    \item[(c)] $U(\mathcal{H}_{\mathbb{Z}_{p^k}}) = \{ q \in \mathcal{H}_{\mathbb{Z}_{p^k}} : N(q) \not\equiv 0 \pmod{p} \}$.
\end{enumerate}
\end{proposition}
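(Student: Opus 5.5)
The plan is to identify the maximal ideal of $\mathbb{Z}_{p^k}$ and then read off (a) and (b) from Proposition \ref{Nollajakat}, and (c) from the description of $U(\mathcal H_R)$ given in Section 2.

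For (a), I first show that $\mathbb{Z}_{p^k}$ is a local ring with $J(\mathbb{Z}_{p^k})=p\mathbb{Z}_{p^k}$. Ideals of $\mathbb{Z}/p^k\mathbb{Z}$ correspond to ideals of $\mathbb{Z}$ containing $p^k\mathbb{Z}$, that is, to $d\mathbb{Z}$ with $d\mid p^k$. So the ideals are $p^j\mathbb{Z}_{p^k}$ for $0\le j\le k$, and they form a chain. The unique maximal one is $p\mathbb{Z}_{p^k}$; alternatively, $\mathbb{Z}_{p^k}/p\mathbb{Z}_{p^k}\cong\mathbb{Z}_p$ is a field and every proper ideal lies inside $p\mathbb{Z}_{p^k}$. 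Proposition \ref{Nollajakat} then gives
\[
U(\mathbb{Z}_{p^k})=\mathbb{Z}_{p^k}\setminus p\mathbb{Z}_{p^k}=\{a:\ p\nmid a\}=\{a:\gcd(a,p)=1\}.
\]
One could instead argue directly with Bézout: if $\gcd(a,p)=1$ then $\gcd(a,p^k)=1$, so $ab+p^kc=1$ for some integers $b,c$. Conversely, $p\mid a$ gives $a\cdot p^{k-1}=0$, so $a$ is a zero divisor and hence not a unit.

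For (b), I count the complement of $J(\mathbb{Z}_{p^k})$. The representatives $0,\dots,p^k-1$ that are divisible by $p$ are exactly the $p^{k-1}$ numbers $0,p,\dots,(p^{k-1}-1)p$. Hence
\[
|U(\mathbb{Z}_{p^k})|=p^k-p^{k-1}=p^{k-1}(p-1).
\]

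For (c), I use the characterization $U(\mathcal H_R)=\{q: N(q)\in U(R)\}$ stated in Section 2. Both directions should be checked for $R$ commutative. If $N(q)$ is a unit, then $N(q)^{-1}\overline{q}$ is a two-sided inverse of $q$, because $q\overline q=\overline q q=N(q)$ is central. Conversely, if $qr=1$, then $N$ is multiplicative, since $N(qr)=qr\overline r\,\overline q=qN(r)\overline q=N(q)N(r)$ using that $N(r)\in R$ is central. This gives $N(q)N(r)=N(1)=1$, so $N(q)\in U(R)$. Combining this with (a), $N(q)\in U(\mathbb{Z}_{p^k})$ exactly when $p\nmid N(q)$, that is, when $N(q)\not\equiv 0\pmod p$. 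This condition is well defined modulo $p^k$ because $p\mid p^k$.

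I do not expect a real obstacle here. The only points needing care are that $\mathbb{Z}_{p^k}$ is local, which requires the classification of its ideals, and that the norm criterion for units holds in both directions. That criterion was stated but not fully justified in Section 2, so the multiplicativity of $N$ should be checked explicitly as above.
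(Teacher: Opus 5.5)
Your proof is correct. Parts (b) and (c) match the paper. In (b) you make the same count of multiples of $p$. In (c) you make the same reduction to the criterion $q\in U(\mathcal H_R)\iff N(q)\in U(R)$ combined with (a). The one addition in (c) is that you verify both directions of this criterion: the inverse $N(q)^{-1}\overline q$ for one direction, and multiplicativity of $N$ for the converse. The paper only calls the criterion ``known''; Section 2 exhibits the inverse formula but never proves that a unit must have unit norm, so your multiplicativity check fills a small gap.

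The genuine difference is in (a). Your main route first shows that $\mathbb Z_{p^k}$ is local with $J(\mathbb Z_{p^k})=\langle p\rangle$, using the correspondence between ideals of $\mathbb Z/p^k\mathbb Z$ and divisors of $p^k$, and then invokes Proposition \ref{Nollajakat}. The paper instead argues elementarily: Bézout gives one inclusion, and reducing $ab\equiv 1\pmod{p^k}$ modulo $p$ contradicts $p\mid a$ for the other. The paper in fact runs the logic in the opposite order. Only after this proposition does it list the ideal chain and assert, without proof, that $\langle p\rangle$ is the unique maximal ideal, and then it derives (\ref{NULJOK}) from Proposition \ref{Nollajakat}. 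Your order is not circular, because you justify the ideal classification independently.

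What each route buys: the paper's argument needs no ideal theory at all. Yours ties (a) to the local-ring structure used later and supplies the missing proof that $\langle p\rangle$ is maximal. Your fallback argument for (a) is essentially the paper's, with the zero-divisor observation $a\,p^{k-1}=0$ in place of the reduction modulo $p$.
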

\begin{proof}
Part (a). Let \(a \in \mathbb{Z}_{p^k}\).\\
\\
Assume that \(a \in U(\mathbb{Z}_{p^k})\). Then there exists \(b \in \mathbb{Z}_{p^k}\) such that
\[
ab \equiv 1 \pmod{p^k}.
\]
Since $p$ is prime, either \(\gcd(a, p)= 1\) or \(\gcd(a, p)=p\).
If the latter holds, then \(p \mid a\), and hence \(p \mid ab\), so that \(ab \equiv 0 \pmod{p}\). On the other hand, the congruence \(ab \equiv 1 \pmod{p^k}\) implies, in particular, that \(ab \equiv 1 \pmod{p}\), which is a contradiction.\\

Assume that \(\gcd(a, p) = 1\). Then \(\gcd(a, p^k) = 1\), since the only prime divisor of \(p^k\) is \(p\). By Bézout's identity, there exist integers \(u, v \in \mathbb{Z}\) such that
\[
au + p^k v = 1.
\]
It follows that \(au \equiv 1 \pmod{p^k}\), so that \(u\) is the inverse of \(a\) modulo \(p^k\). Therefore, \(a \in U(\mathbb{Z}_{p^k})\).\\
\\
Part (b). Since the ring \(\mathbb{Z}_{p^k}\) contains exactly \(p^k\) elements, those divisible by \(p\) are of the form
\[
0, p, 2p, 3p, \dots, (p^{k-1}-1)p,
\]
that is, there are exactly \(p^{k-1}\) such elements. The remaining elements are units. Hence,
\[
|U(\mathbb{Z}_{p^k})| = p^k - p^{k-1} = p^{k-1}(p-1).
\]
Part (c). It is known that, in the quaternion ring \(\mathcal{H}_R\),
\[
q \in U(\mathcal{H}_R) \iff N(q) \in U(R).
\]
By part (a), it follows that
\[
N(q) \in U(\mathbb{Z}_{p^k}) \iff \gcd(N(q), p) = 1 \iff p \nmid N(q) \iff N(q) \not\equiv 0 \pmod{p}.
\]
\end{proof}
The ideals of the ring \(\mathbb Z_{p^k}\) are
\[
\langle p^i\rangle,\qquad i=0,1,\ldots,k,
\]
which form the descending chain
\[
\mathbb Z_{p^k}=\langle 1\rangle\supset \langle p\rangle \supset \langle p^2\rangle \supset\cdots \supset \langle p^k\rangle=\langle 0\rangle.
\]
Each ideal is of the form
\[
\langle p^i\rangle = \{ 0, p^i, 2p^i, 3p^i, \ldots, (p^{k-i}-1)p^i \}.
\]
Hence,
\begin{align}\label{ideaalinkoko}
    |\langle p^i\rangle| = p^{k-i}.
\end{align}
The unique maximal ideal of the ring $\mathbb{Z}_{p^k}$ is
\[
\langle p\rangle=\{0,p,2p,\ldots,p^k-p\},
\]
and therefore $\mathbb Z_{p^k}$ is a local ring whose Jacobson radical is
\[
J(\mathbb Z_{p^k})=\langle p\rangle.
\]
Thus, $\mathbb Z_{p^k}$ is a commutative principal ideal ring, and since, by Proposition \ref{Nollajakat},
\begin{align}\label{NULJOK}
U(\mathbb Z_{p^k})=\mathbb Z_{p^k}\setminus \langle p\rangle,
\end{align}
it follows that all zero divisors (that is, precisely the elements divisible by $p$) belong to the Jacobson radical.

\begin{proposition}\label{Aarkund}
Let \(p\) be a prime and \(k \ge 1\). Then
\[
\mathbb{Z}_{p^k}/J(\mathbb{Z}_{p^k})=\mathbb{Z}_{p^k}/\langle p\rangle\cong\mathbb{Z}_p,
\]
where $\mathbb{Z}_p$ is a finite field.
\end{proposition}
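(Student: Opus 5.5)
The plan is to realize the quotient concretely through the reduction-modulo-$p$ map. Define
\[
\pi:\mathbb{Z}_{p^k}\to\mathbb{Z}_p,\qquad \pi(a+p^k\mathbb{Z})=a+p\mathbb{Z}.
\]

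First, check that $\pi$ is well defined. If $a\equiv a'\pmod{p^k}$, then $p\mid p^k\mid a-a'$, so $a\equiv a'\pmod p$. Since addition and multiplication in both rings are induced from $\mathbb{Z}$, $\pi$ is a unital ring homomorphism. It is surjective, because every class $a+p\mathbb{Z}$ with $0\le a<p$ is the image of $a+p^k\mathbb{Z}$.

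Next, compute the kernel. An element $a\in\mathbb{Z}_{p^k}$ lies in $\ker\pi$ exactly when $p\mid a$, that is, when $a$ belongs to
\[
\langle p\rangle=\{0,p,\ldots,p^k-p\}.
\]
Hence $\ker\pi=\langle p\rangle$. The text preceding the statement identifies this ideal as $J(\mathbb{Z}_{p^k})$, the unique maximal ideal. The first isomorphism theorem then gives
\[
\mathbb{Z}_{p^k}/J(\mathbb{Z}_{p^k})=\mathbb{Z}_{p^k}/\langle p\rangle\cong\mathbb{Z}_p .
\]
As a consistency check, the cardinalities agree: by (\ref{ideaalinkoko}), $|\langle p\rangle|=p^{k-1}$, so the quotient has $p^k/p^{k-1}=p$ elements.

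It remains to justify that $\mathbb{Z}_p$ is a field. The quickest route is to apply Proposition \ref{YksRom}(a) with $k=1$: the units of $\mathbb{Z}_p$ are the classes prime to $p$, which are all nonzero classes. Alternatively, the quotient of a local ring by its maximal ideal is a field, as noted in the paper's discussion of local rings.

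None of these steps is a real obstacle. The only point needing care is the well-definedness of $\pi$ on residue classes, which rests on the divisibility $p\mid p^k$ for $k\ge1$.
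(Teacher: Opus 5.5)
Your proof is correct and follows the paper's argument essentially verbatim. The paper also uses the reduction map modulo $p$ as a well-defined surjective ring homomorphism with kernel $\langle p\rangle = J(\mathbb{Z}_{p^k})$, followed by the first isomorphism theorem. Your extra justification that $\mathbb{Z}_p$ is a field (via Proposition \ref{YksRom}(a) with $k=1$) fills in a point the paper simply asserts.
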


\begin{proof}
The mapping
\[
\varphi:\mathbb{Z}_{p^k}\longrightarrow\mathbb{Z}_p,
\qquad
x \bmod p^k \longmapsto x \bmod p
\]
is defined.
This map is a well-defined and surjective ring homomorphism.
Moreover,
\[
\ker\varphi
=
\{x\in\mathbb{Z}_{p^k}:x\equiv0\pmod p\}
=\langle p\rangle.
\]
Since \(J(\mathbb{Z}_{p^k})=\langle p\rangle\), it follows that
\[
\ker\varphi=J(\mathbb{Z}_{p^k}).
\]
By the First Isomorphism Theorem,
\[
\mathbb{Z}_{p^k}/J(\mathbb{Z}_{p^k})
\cong
\mathbb{Z}_{p^k}/\langle p\rangle
\cong
\mathbb{Z}_p.
\]
\end{proof}

\begin{corollary}\label{nojakkaaa}
The following statements are equivalent:
\begin{itemize}
    \item[(a)] $a\in U(\mathbb{Z}_{p^k})$,
    \item[(b)] $a\notin J(\mathbb{Z}_{p^k})=\langle p\rangle$,
    \item[(c)] $a\not\equiv 0 \mod p$.
\end{itemize}
\end{corollary}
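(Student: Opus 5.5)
The equivalences follow by chaining two results already established, so the plan is to show (a)$\Leftrightarrow$(b) and (b)$\Leftrightarrow$(c) separately.

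For (a)$\Leftrightarrow$(b) we use Proposition~\ref{Nollajakat}. The ring $\mathbb{Z}_{p^k}$ is local with unique maximal ideal $\langle p\rangle$, so $J(\mathbb{Z}_{p^k})=\langle p\rangle$. Proposition~\ref{Nollajakat} then gives $U(\mathbb{Z}_{p^k})=\mathbb{Z}_{p^k}\setminus\langle p\rangle$, which is exactly identity (\ref{NULJOK}). Hence $a$ is a unit if and only if $a\notin J(\mathbb{Z}_{p^k})$.

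For (b)$\Leftrightarrow$(c) we use the reduction map from the proof of Proposition~\ref{Aarkund}. That map $\mathbb{Z}_{p^k}\to\mathbb{Z}_p$ has kernel $\langle p\rangle=J(\mathbb{Z}_{p^k})$. So $a\in J(\mathbb{Z}_{p^k})$ exactly when $a\equiv 0\pmod p$, and taking negations gives (b)$\Leftrightarrow$(c). Alternatively, (a)$\Leftrightarrow$(c) follows directly from Proposition~\ref{YksRom}(a), since $\gcd(a,p)=1$ is the same as $p\nmid a$.

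No step is a genuine obstacle. The one point needing care is well-definedness: the condition $a\not\equiv 0\pmod p$ must not depend on the integer representative chosen for $a$ modulo $p^k$. This holds because $p\mid p^k$, so any two representatives of $a$ are congruent modulo $p$.
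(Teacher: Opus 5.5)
Your proposal is correct and matches the paper: the paper gives no separate proof and treats the corollary as an immediate consequence of identity (\ref{NULJOK}) (Proposition~\ref{Nollajakat} applied to the local ring $\mathbb{Z}_{p^k}$), Proposition~\ref{YksRom}(a), and the kernel computation in Proposition~\ref{Aarkund}, which is exactly the chain you use. Your remark that the condition $a\not\equiv 0 \pmod p$ does not depend on the chosen representative is a small clarification the paper leaves implicit.
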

The matrix representation of the ring $\mathcal H_{\mathbb{Z}_{p^k}}$ is considered next.

\begin{proposition}\label{Pkmatriisi}
Let $p$ be an odd prime and $k\ge 1$. Then
\[
\mathcal H_{\mathbb{Z}_{p^k}} \cong M_2(\mathbb{Z}_{p^k}).
\]
\end{proposition}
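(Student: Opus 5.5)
The plan is to apply Theorem \ref{Matest} with $R=\mathbb{Z}_{p^k}$. This requires two hypotheses: that $2\in U(\mathbb{Z}_{p^k})$, and that the equation $1+\alpha^2+\beta^2=0$ has a solution in $\mathbb{Z}_{p^k}$.

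The first hypothesis is immediate. Since $p$ is odd, $\gcd(2,p)=1$, so $2\in U(\mathbb{Z}_{p^k})$ by Proposition \ref{YksRom}(a).

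For the second hypothesis I would proceed in two steps: first solve the equation modulo $p$, then lift the solution to modulo $p^k$.

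\textbf{Step 1: a solution modulo $p$.} By Mann's theorem, or by the standard pigeonhole argument, there exist $\alpha_0,\beta_0\in\mathbb{Z}_p$ with $\alpha_0^2+\beta_0^2\equiv -1 \pmod p$. The pigeonhole argument runs as follows. The sets $\{\alpha^2\}$ and $\{-1-\beta^2\}$ each contain $(p+1)/2$ elements of $\mathbb{Z}_p$, so they must intersect.

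\textbf{Step 2: Hensel lifting.} This is the main point. Since $\alpha_0^2+\beta_0^2\equiv -1$, the elements $\alpha_0$ and $\beta_0$ cannot both be $0$ mod $p$. Without loss of generality $p\nmid\alpha_0$. Fix $\beta=\beta_0$, viewed as an integer, and consider
\[
f(x)=x^2+1+\beta_0^2 .
\]
We have $f(\alpha_0)\equiv 0\pmod p$ and $f'(\alpha_0)=2\alpha_0\not\equiv 0\pmod p$, because $p$ is odd and $p\nmid \alpha_0$.

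I would carry out the lift explicitly by induction on $j$, to keep the argument elementary. Suppose $f(a_j)\equiv 0 \pmod{p^j}$ with $p\nmid a_j$. Set $a_{j+1}=a_j+tp^j$. Then
\[
f(a_{j+1})\equiv f(a_j)+2a_jtp^j \pmod{p^{j+1}},
\]
because the term $t^2p^{2j}$ vanishes modulo $p^{j+1}$ when $j\ge1$. Writing $f(a_j)=sp^j$, choose $t\equiv -s(2a_j)^{-1}\pmod p$, which is possible since $2a_j$ is a unit mod $p$. This gives $f(a_{j+1})\equiv 0\pmod{p^{j+1}}$, and $a_{j+1}\equiv a_j\pmod p$, so $p\nmid a_{j+1}$ still holds.

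After reaching $j=k$, set $\alpha=a_k$ and $\beta=\beta_0$, both reduced mod $p^k$. Then $1+\alpha^2+\beta^2=0$ in $\mathbb{Z}_{p^k}$.

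Both hypotheses of Theorem \ref{Matest} now hold, so the theorem gives $\mathcal H_{\mathbb{Z}_{p^k}}\cong M_2(\mathbb{Z}_{p^k})$.

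The only real obstacle is the lifting step. The key points there are that the derivative $2\alpha_0$ is a unit, which uses that $p$ is odd, and that at least one of $\alpha_0,\beta_0$ is nonzero mod $p$, which lets us arrange $p\nmid\alpha_0$.
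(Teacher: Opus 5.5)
Your proposal is correct and follows essentially the paper's route: check $2\in U(\mathbb{Z}_{p^k})$, find a solution of $1+\alpha^2+\beta^2=0$ modulo $p$, then Hensel-lift it using that $2\alpha$ is a unit. The paper cites Mann where you give the pigeonhole argument, and it phrases the lift as an induction on $k$ with the single correction $\alpha'=\alpha-w(2\alpha)^{-1}$, where $w^2=0$; this is the same step as your digit-by-digit lift.
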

\begin{proof}
Since $\gcd(2, p) = 1$, it follows that $2\in U(\mathbb{Z}_{p^k})$. Hence, it is sufficient to show that the equation
\[
1+\alpha^2+\beta^2=0
\]
can be solved in the ring $\mathbb{Z}_{p^k}$, after which the result follows from Theorem \ref{Matest}.
The proof is carried out by induction on \(k\). This part of the proof follows \cite{Ghosseiri19}.\\
\\
Base case \(k=1\): In this case \(R=\mathbb{Z}_p\) is a finite field of odd order. It is well known that, in every finite field of odd order, the element \(-1\) can be expressed as the sum of two squares (see \cite{Mann}). Hence, there exist \(\alpha,\beta\in \mathbb{Z}_p\) such that \(1+\alpha^2+\beta^2=0\).\\
\\
Induction step: Assume that \(k\ge 2\) and that the statement holds for the ring \(R=\mathbb{Z}_{p^{k-1}}\).\\
\\
Let \(R=\mathbb{Z}_{p^k}\). Its Jacobson radical is \(J(\mathbb{Z}_{p^k})=\langle p\rangle\), and it is nilpotent. Let \(t=k-1\). Then
\[
J(\mathbb{Z}_{p^k})^t = \langle p^{k-1}\rangle  \neq 0 \quad \text{and} \quad J(R)^{t+1} = \langle p^k\rangle = 0.
\]
Consider the quotient ring
\[
S = \mathbb{Z}_{p^k}/J(\mathbb{Z}_{p^k})^t = \mathbb{Z}_{p^k}/\langle p^{k-1}\rangle \cong \mathbb{Z}_{p^{k-1}}.
\]
Since \(S\) is isomorphic to the ring \(\mathbb{Z}_{p^{k-1}}\), the induction hypothesis implies that there exist elements \(\widetilde{\alpha}, \widetilde{\beta}\in S\) such that
\[
\widetilde{1}+\widetilde{\alpha}^2+\widetilde{\beta}^2=0 \quad \text{in the ring } S.
\]
Let \(\alpha,\beta\in \mathbb{Z}_{p^k}\) be representatives of these residue classes. Then the difference belongs to the kernel \(J(\mathbb{Z}_{p^k})^t=\langle p^{k-1}\rangle\), and hence there exists \(w\in \langle p^{k-1}\rangle\) such that
\begin{align}\label{Mash1}
1+\alpha^2+\beta^2 = w.
\end{align}
Since \(w\in \langle p^{k-1}\rangle\) and \(k\ge 2\), it follows that \(2k-2 \ge k\), and therefore
\[
w^2 \in \langle p^{2k-2}\rangle \subseteq \langle p^{k}\rangle = 0.
\]
Hence,
\[
w^2=0.
\]
Equation (\ref{Mash1}) is now considered modulo \(J(\mathbb{Z}_{p^k})=\langle p\rangle\). Since \(w\in \langle p^{k-1}\rangle\subseteq \langle p \rangle\), the following identity is obtained in the quotient field \(\mathbb{Z}_p\):
\[
\widetilde{1}+\widetilde{\alpha}^2+\widetilde{\beta}^2 = 0.
\]
In this field, the elements \(\widetilde{\alpha}\) and \(\widetilde{\beta}\) cannot both be zero, since otherwise \(\widetilde{1}=0\), which is impossible. Hence, at least one of the elements \(\alpha,\beta\) is not divisible by \(p\), that is, at least one is a unit in the ring \(\mathbb{Z}_{p^k}\). Without loss of generality, it may be assumed that
\[
\alpha \in U(\mathbb{Z}_{p^k}).
\]

Since \(p\) is odd, it follows that \(2\in U(\mathbb{Z}_{p^k})\). Hence, \(2\alpha \in U(\mathbb{Z}_{p^k})\), and therefore
\[
\alpha' = \alpha - w(2\alpha)^{-1}
\]
may be defined.
The square \(\alpha'^2\) is computed using the identity \(w^2=0\):
\[
\begin{aligned}
\alpha'^2
&= \left(\alpha - w(2\alpha)^{-1}\right)^2 \\
&= \alpha^2 - 2\alpha w (2\alpha)^{-1} + w^2 (2\alpha)^{-2} \\
&= \alpha^2 - w + 0 \\
&= \alpha^2 - w.
\end{aligned}
\]
Substituting this into equation (\ref{Mash1}) yields
\[
\begin{aligned}
1 + \alpha'^2 + \beta^2
&= 1 + (\alpha^2 - w) + \beta^2 \\
&= (1+\alpha^2+\beta^2) - w \\
&= w - w = 0.
\end{aligned}
\]
Thus, the elements \(\alpha'\) and \(\beta\) satisfy the required equation in the ring \(\mathbb{Z}_{p^k}\). The induction step is complete, and the proof follows.

\end{proof}
Let \(m \in \mathbb{Z}_{p^k}\). Let \(a \in \{0, 1, 2, \ldots, p^k-1\}\) be the integer representative of \(m\). The \textit{$p$-adic valuation} $v_p$ is defined as follows:
\begin{itemize}
    \item If \(m \neq 0\) (that is, \(a \neq 0\)):  
    \(v_p(m)\) is defined to be the largest exponent \(t \in \mathbb{N}_{0}=\{0,1,2,3,\ldots\}\) such that \(p^t\) divides \(a\).
    In other words,
    \[
    v_p(m) = \max\{ t\in \mathbb{N}_{0} : p^t \mid a \}.
    \]
    Since \(0 < a < p^k\), this value always satisfies \(0 \le v_p(m) \le k-1\).

\item If \(m = 0\):  
    It is defined that
    \[
    v_p(0) = k.
    \]
\end{itemize}

\begin{lemma}\label{Padickkaava}
For $0\le t\le k-1$,
\[
\{m\in \mathbb{Z}_{p^k}:v_p(m)=t\}=\langle p^t\rangle \setminus\langle p^{t+1}\rangle
\]
and
\[
|\{m:v_p(m)=t\}|=p^{k-t-1}(p-1).
\]
Moreover, these sets form a pairwise disjoint partition of the ring $\mathbb{Z}_{p^k}$:
\[
\mathbb{Z}_{p^k}=\bigsqcup_{t=0}^k \{m\in \mathbb{Z}_{p^k}:v_p(m)=t\}.
\]
\end{lemma}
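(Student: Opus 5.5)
The plan is to argue directly from the definition of $v_p$ on the integer representatives $a\in\{0,1,\ldots,p^k-1\}$, combined with the description of the ideals $\langle p^i\rangle$ and their sizes \eqref{ideaalinkoko}. The lemma has three parts, which I will handle in this order: the set identity, the count, and the partition.

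First, fix $0\le t\le k-1$ and show $\{m:v_p(m)=t\}=\langle p^t\rangle\setminus\langle p^{t+1}\rangle$. The one point needing care is that membership $m\in\langle p^i\rangle$ for $i\le k$ is equivalent to $p^i\mid a$, where $a$ is the representative of $m$. If $m=bp^i$ in $\mathbb{Z}_{p^k}$, then $a\equiv bp^i \pmod{p^k}$. Since $i\le k$, $p^i$ divides both $bp^i$ and $p^k$, hence divides $a$. The converse is immediate.

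With this equivalence, suppose $m\neq0$. Then $v_p(m)=t$ means $p^t\mid a$ and $p^{t+1}\nmid a$, that is, $m\in\langle p^t\rangle\setminus\langle p^{t+1}\rangle$. The case $m=0$ must be excluded from both sides. On the left, $v_p(0)=k\neq t$. On the right, $0\in\langle p^{t+1}\rangle$ because $t+1\le k$. So the identity holds.

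Next, the count. Since $\langle p^{t+1}\rangle\subseteq\langle p^t\rangle$, \eqref{ideaalinkoko} gives
\[
|\langle p^t\rangle\setminus\langle p^{t+1}\rangle| = p^{k-t}-p^{k-t-1} = p^{k-t-1}(p-1).
\]

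Finally, the partition. The value $v_p(m)$ is a well-defined function on $\mathbb{Z}_{p^k}$ with values in $\{0,\ldots,k\}$: nonzero elements have $0\le v_p(m)\le k-1$ as noted in the definition, and $v_p(0)=k$. The level sets of a function are automatically pairwise disjoint and cover the domain. The set for $t=k$ is $\{0\}=\langle p^k\rangle$. As a consistency check, the sizes sum correctly:
\[
1+\sum_{t=0}^{k-1}p^{k-t-1}(p-1) = 1+(p^k-1) = p^k.
\]
There is no real obstacle here. The only subtle step is the equivalence between ideal membership and divisibility of the representative, which needs the condition $i\le k$.
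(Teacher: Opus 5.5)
Your proof is correct and follows essentially the same route as the paper: you identify the level set of $v_p$ with $\langle p^t\rangle\setminus\langle p^{t+1}\rangle$ via divisibility, count it as $|\langle p^t\rangle|-|\langle p^{t+1}\rangle|=p^{k-t-1}(p-1)$, and obtain the partition from $v_p$ being a well-defined function on $\mathbb{Z}_{p^k}$. You are somewhat more careful than the paper in two places, both handled correctly: you check that ideal membership matches divisibility of the integer representative when $i\le k$, and you exclude $m=0$ explicitly from both sides.
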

\begin{proof}
Assume that
\begin{align*}
m\in\langle p^t\rangle\setminus\langle p^{t+1}\rangle.
\end{align*}
Then $p^t\mid m$, and hence $v_p(m)\ge t$, while $p^{t+1}\nmid m$, and therefore $v_p(m)\le t$. Consequently, $v_p(m)= t$. Thus,
\begin{align}
m\in\langle p^t\rangle\setminus\langle p^{t+1}\rangle
\iff
v_p(m)= t.
\end{align}
This proves the first assertion. The cardinality is computed as
\[
|\{m:v_p(m)=t\}|
=
|\langle p^{t}\rangle|-|\langle p^{t+1}\rangle|
=
p^{k-t}-p^{k-t-1}
=
p^{k-t-1}(p-1),
\]
where formula (\ref{ideaalinkoko}) has been applied.
The third assertion follows from the fact that every \(m\in \mathbb{Z}_{p^k}\) belongs to exactly one set \(\{m:v_p(m)=t\}\) (since \(v_p(m)\) is uniquely determined), and these sets are pairwise disjoint.
\end{proof}

\begin{proposition}\label{Prop9}
If \(p\) is an odd prime and $k\ge 1$, then
\[
|J(\mathcal{H}_{\mathbb{Z}_{p^k}})| =   p^{4k-4}.
\]
\end{proposition}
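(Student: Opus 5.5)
By Proposition \ref{Pkmatriisi}, the hypotheses of Theorem \ref{Matest} hold for $R=\mathbb{Z}_{p^k}$. Hence Proposition \ref{RadikProp} applies and gives a bijection (indeed a ring isomorphism restricted to the radical)
\[
J(\mathcal{H}_{\mathbb{Z}_{p^k}})\cong M_2\big(J(\mathbb{Z}_{p^k})\big)=M_2(\langle p\rangle).
\]
The cardinality can therefore be counted on the matrix side. A matrix in $M_2(\langle p\rangle)$ is determined by four independently chosen entries from $\langle p\rangle$. By formula (\ref{ideaalinkoko}) with $i=1$ we have $|\langle p\rangle|=p^{k-1}$, so
\[
|M_2(\langle p\rangle)|=(p^{k-1})^4=p^{4k-4},
\]
which is the claim.

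As an independent check, I would also describe the radical directly in quaternion coordinates. The isomorphism $\varphi$ maps the $R$-basis $\{1,i,j,k\}$ onto an $R$-basis of $M_2(R)$. For any ideal $I$ of $R$, this should force $\varphi^{-1}(M_2(I))=\{w+xi+yj+zk: w,x,y,z\in I\}$. The inclusion $\supseteq$ is immediate from the explicit formula for $\varphi(q)$ in Proposition \ref{Normi}.

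For the reverse inclusion, note that the inverse of $\varphi$ expresses $w,x,y,z$ as $R$-linear combinations of the matrix entries. This is exactly the linear-independence computation in the proof of Theorem \ref{Matest}, which uses only that $2$ and $\alpha^2+\beta^2=-1$ are units. Since an ideal is closed under $R$-linear combinations, entries in $I$ force $w,x,y,z\in I$. With $I=\langle p\rangle$ this gives the count $(p^{k-1})^4$ again.

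The only step needing care is that Proposition \ref{RadikProp} gives an identification of sets, not merely an abstract isomorphism. This holds because $\varphi$ is a bijection and its proof shows $\varphi(J(\mathcal{H}_R))=M_2(J(R))$ literally. Beyond that, the argument is a direct count.
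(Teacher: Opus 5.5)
Your proof is correct, but it reaches the count by a different route than the paper. The paper does not pass to matrices here. It quotes an external result (Theorem 3.6 of the Cheraghpour et al.\ reference) stating that $J(\mathcal{H}_R)=\mathcal{H}_{J(R)}$ for every ring in which $2$ is invertible, and then counts quaternions whose four coefficients lie in $\langle p\rangle$.

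You instead combine Proposition~\ref{Pkmatriisi} with Proposition~\ref{RadikProp}, and hence with the standard fact $J(M_2(R))=M_2(J(R))$ already used there, and then count $M_2(\langle p\rangle)$. Your route stays inside results the paper has already established and avoids one further citation. The cost is that it depends on the existence of $\alpha,\beta$ with $1+\alpha^2+\beta^2=0$, which the cited theorem does not need. The paper's route is therefore more general, and it gives the coordinate description of the radical directly.

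Your ``independent check'' inverts $\varphi$ using that $2$ and $\alpha^2+\beta^2=-1$ are units, so that $\varphi^{-1}(M_2(I))=\mathcal{H}_I$ for every ideal $I$. This in fact reproves $J(\mathcal{H}_{\mathbb{Z}_{p^k}})=\mathcal{H}_{\langle p\rangle}$ in this setting. Taken together, your two parts recover exactly what the paper imports.

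The caution in your last paragraph is unnecessary: any bijection preserves cardinality, even one that is not a literal identification of sets. Only the cardinality is used later, in Lemma~\ref{GLmaara}.
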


\begin{proof}
It is proved in \cite[Theorem~3.6]{CheraghpourGhosseiriHeidariZadehSafari} that, for every ring \(R\) satisfying \(2^{-1}\in R\),
\[
J(\mathcal{H}_R)=\mathcal{H}_{J(R)}.
\]
Hence, $J(\mathcal H_{\mathbb{Z}_{p^k}})$ consists of those quaternions whose coefficients belong to $J(\mathbb{Z}_{p^k})$. Since \(|J(\mathbb{Z}_{p^k})|=p^{k-1}\), it follows that
\[
|J(\mathcal{H}_{\mathbb{Z}_{p^k}})|
=
|J(R)|^4
=
(p^{k-1})^4
=
p^{4k-4}.
\]
\end{proof}

\section{The Fibres of the Norm Map}

Next, the cardinalities of the fibers of the norm map
\[
a_{p^k}(m)=\{ q\in \mathbb{Z}_{p^k} : N(q)=m\}
\]
are investigated.
By Propositions \ref{Normi} and \ref{Pkmatriisi},
\[
a_{p^k}(m) =  |\{ A \in M_2(\mathbb{Z}_{p^k}) : \det(A) = m \}|.
\]
Thus we need to count matrices with a given determinant.

\subsection{Counting matrices with unit determinant}
Let
\[
\mathrm{GL}_2(\mathbb{Z}_{p^k})=\{A\in M_2(\mathbb{Z}_{p^k}) : \det(A)\in U(\mathbb{Z}_{p^k})\}.
\]
The following result can be found in \cite{Dolzan21}, but an alternative proof is provided.

\begin{lemma}\label{GLmaara}
Let $p$ be an odd prime and $k\ge 1$. Then
\[
|\mathrm{GL}_2(\mathbb{Z}_{p^k})| = p^{4k-3}(p-1)(p^2-1).
\]
\end{lemma}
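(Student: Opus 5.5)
The plan is to combine the reduction map with the short exact sequence of Proposition \ref{EksJon}. Take $R=\mathbb{Z}_{p^k}$ and $S=M_2(\mathbb{Z}_{p^k})$. By Proposition \ref{Pkmatriisi} the hypotheses of Proposition \ref{EksJon} hold, so we have
\[
1\longrightarrow 1+M_2(\langle p\rangle)\longrightarrow \mathrm{GL}_2(\mathbb{Z}_{p^k})\longrightarrow \mathrm{GL}_2(\mathbb{Z}_p)\longrightarrow 1 .
\]
Here we use $J(M_2(R))=M_2(J(R))$ from Proposition \ref{RadikProp}, $J(\mathbb{Z}_{p^k})=\langle p\rangle$, and $\mathbb{Z}_{p^k}/\langle p\rangle\cong\mathbb{Z}_p$ from Proposition \ref{Aarkund}. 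We also use that $U(M_2(R))$ is exactly the set of matrices with unit determinant. This follows from the adjugate formula $A\,\mathrm{adj}(A)=\det(A)I$ together with multiplicativity of the determinant, so it agrees with the definition of $\mathrm{GL}_2(\mathbb{Z}_{p^k})$ in the statement.

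Exactness gives
\[
|\mathrm{GL}_2(\mathbb{Z}_{p^k})|=|1+M_2(\langle p\rangle)|\cdot|\mathrm{GL}_2(\mathbb{Z}_p)|.
\]
The map $X\mapsto 1+X$ is a bijection from $M_2(\langle p\rangle)$ onto the kernel. By (\ref{ideaalinkoko}), $|\langle p\rangle|=p^{k-1}$, so the kernel has $(p^{k-1})^4=p^{4k-4}$ elements. This is also consistent with Proposition \ref{Prop9}.

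It remains to count $\mathrm{GL}_2(\mathbb{Z}_p)$ over the field $\mathbb{Z}_p$, by the standard column argument. The first column can be any nonzero vector, giving $p^2-1$ choices. The second column must avoid the $p$ scalar multiples of the first, giving $p^2-p$ choices. Hence
\[
|\mathrm{GL}_2(\mathbb{Z}_p)|=(p^2-1)(p^2-p)=p(p-1)(p^2-1).
\]
Multiplying the two counts yields
\[
p^{4k-4}\cdot p(p-1)(p^2-1)=p^{4k-3}(p-1)(p^2-1),
\]
which is the claimed formula.

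The only delicate point is the identification of the units in the exact sequence. Proposition \ref{EksJon} is stated for $U(\mathcal H_R)$ and passes to $M_2$ through the isomorphism $\varphi$. We must make sure that the middle term really is the determinant-defined $\mathrm{GL}_2(\mathbb{Z}_{p^k})$, which the adjugate argument above settles. As an alternative that avoids this, one can argue directly: $A$ is invertible if and only if $\det A\not\equiv0\pmod p$, which holds if and only if its reduction mod $p$ is invertible. Each matrix over $\mathbb{Z}_p$ has exactly $p^{4(k-1)}$ lifts to $M_2(\mathbb{Z}_{p^k})$, so the count follows immediately.
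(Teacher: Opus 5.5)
Your proof is correct and follows essentially the same route as the paper: the exact sequence of Proposition~\ref{EksJon} with quotient $\mathrm{GL}_2(\mathbb{Z}_p)$, the column count $(p^2-1)(p^2-p)$, and a kernel of size $p^{4k-4}$. The differences are minor improvements. You count the kernel directly as $1+M_2(\langle p\rangle)$ instead of citing Proposition~\ref{Prop9}, and you use the adjugate to justify that the units of $M_2(\mathbb{Z}_{p^k})$ are exactly the matrices with unit determinant, which the paper takes for granted.
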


\begin{proof}
Propositions \ref{EksJon} and \ref{Aarkund}, together with the isomorphism
$U(\mathcal{H}_{\mathbb{Z}_{p^k}}) \cong \mathrm{GL}_2({\mathbb{Z}_{p^k}})$, yield the exact sequence
\[
1 \longrightarrow 1 + J(\mathcal{H}_{\mathbb{Z}_{p^k}}) \longrightarrow \mathrm{GL}_2({\mathbb{Z}_{p^k}}) \longrightarrow \mathrm{GL}_2(\mathbb{Z}_p) \longrightarrow 1.
\]
Since the sequence is exact, it follows that
\[
|\mathrm{GL}_2({\mathbb{Z}_{p^k}})| = |\mathrm{GL}_2(\mathbb{Z}_p)| \cdot |1 + J(\mathcal{H}_{\mathbb{Z}_{p^k}})|.
\]
In the group $\mathrm{GL}_2(\mathbb{Z}_p)$, the first column of a matrix may be chosen as any nonzero vector of $\mathbb{Z}_p^2$, giving $p^2-1$ possibilities. Once the first column $\vec{v}$ has been chosen, it spans the one-dimensional subspace $\{ \lambda \vec{v} : \lambda\in\mathbb{Z}_p\}$, which contains $p$ elements. Hence, the second column may be chosen in $p^2-p$ ways, and therefore
\[
|\mathrm{GL}_2(\mathbb{Z}_p)| = (p^2-1)(p^2-p) = p(p-1)(p^2-1).
\]
Proposition \ref{Prop9} yields
\[
|J(\mathcal{H}_{\mathbb{Z}_{p^k}})| = p^{4k-4}.
\]
Since
\[
|1 + J(\mathcal{H}_{\mathbb{Z}_{p^k}})| = |J(\mathcal{H}_{\mathbb{Z}_{p^k}})|,
\]
it follows that
\[
|\mathrm{GL}_2({\mathbb{Z}_{p^k}})
= p^{4k-3}(p-1)(p^2-1).
\]
\end{proof}
Next, the cardinality of $a_{p^k}(m)$ is considered for \(m\in U({\mathbb{Z}_{p^k}})\).

\begin{lemma}\label{Tasakuitu}
Each fiber
\[
a_{p^k}(m)
=
\{A\in M_2(\mathbb{Z}_{p^k}) : \det(A)=m\},
\qquad
m\in U(\mathbb{Z}_{p^k}),
\]
contains the same number of elements. In particular, each fiber has the same cardinality as the group
\[
\mathrm{SL}_2(\mathbb{Z}_{p^k})
=
\{A\in \mathrm{GL}_2(\mathbb{Z}_{p^k}) : \det(A)=1\}.
\]
\end{lemma}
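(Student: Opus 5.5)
The plan is to exhibit, for each unit $m\in U(\mathbb{Z}_{p^k})$, an explicit bijection between the fibre $\{A\in M_2(\mathbb{Z}_{p^k}):\det(A)=m\}$ and $\mathrm{SL}_2(\mathbb{Z}_{p^k})$. The natural candidate is left multiplication by a fixed matrix of determinant $m$. Set
\[
D_m=\begin{pmatrix} m & 0\\ 0 & 1\end{pmatrix}.
\]
Since $m$ is a unit, $D_m\in\mathrm{GL}_2(\mathbb{Z}_{p^k})$, with inverse $D_m^{-1}=\begin{pmatrix} m^{-1}&0\\0&1\end{pmatrix}$.

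Define the map $\Phi_m:\mathrm{SL}_2(\mathbb{Z}_{p^k})\to\{A:\det A=m\}$ by $\Phi_m(B)=D_mB$.

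\begin{itemize}
\item Multiplicativity of the determinant over the commutative ring $\mathbb{Z}_{p^k}$ gives $\det(D_mB)=m\cdot 1=m$, so $\Phi_m$ lands in the fibre.
\item The map $\Psi_m(A)=D_m^{-1}A$ goes the other way, since $\det(D_m^{-1}A)=m^{-1}m=1$.
\item Associativity of matrix multiplication shows that $\Phi_m$ and $\Psi_m$ are mutually inverse.
\end{itemize}

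Hence every unit fibre is in bijection with $\mathrm{SL}_2(\mathbb{Z}_{p^k})$, and therefore all unit fibres have equal cardinality. This proves both claims of the lemma.

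As a consequence, I would then record that $\mathrm{GL}_2(\mathbb{Z}_{p^k})$ is the disjoint union of the fibres over the $p^{k-1}(p-1)$ units, by Proposition \ref{YksRom}(b). Lemma \ref{GLmaara} then gives
\[
|\mathrm{SL}_2(\mathbb{Z}_{p^k})|=\frac{p^{4k-3}(p-1)(p^2-1)}{p^{k-1}(p-1)}=p^{3k-2}(p^2-1),
\]
which matches the $t=0$ case of the main formula.

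There is no real obstacle here. The only point needing care is that the determinant is multiplicative and that $D_m$ is invertible; both hold because $R$ is commutative and $m\in U(R)$. The fibre notation $a_{p^k}(m)$ should be read as the set, as in the statement.
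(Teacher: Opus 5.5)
Your proof is correct and is essentially the paper's argument: the paper also uses left multiplication by $D_m=\begin{pmatrix} m&0\\0&1\end{pmatrix}$ as a bijection $\mathrm{SL}_2(\mathbb{Z}_{p^k})\to\det^{-1}(m)$, with inverse $Y\mapsto D_m^{-1}Y$. Your follow-up count $|\mathrm{SL}_2(\mathbb{Z}_{p^k})|=p^{3k-2}(p^2-1)$ is what the paper does next in Proposition \ref{Uksikkokuidutmaara}; there it is phrased via the kernel of $\det$ and the First Isomorphism Theorem rather than as a disjoint union of equal-size fibres.
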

\begin{proof}
Consider the determinant map
\[
\det: \mathrm{GL}_2(\mathbb{Z}_{p^k}) \longrightarrow U(\mathbb{Z}_{p^k}).
\]
This map is surjective, since
\[
\det\begin{pmatrix} m & 0 \\ 0 & 1 \end{pmatrix}=m
\]
for every \(m\in U(\mathbb{Z}_{p^k})\). Let \(m\in U(\mathbb{Z}_{p^k})\) be fixed. Left multiplication by the invertible matrix
\[
D_m=\begin{pmatrix} m & 0 \\ 0 & 1 \end{pmatrix}
\]
defines a bijection
\[
\varphi_m: \mathrm{SL}_2(\mathbb{Z}_{p^k}) \longrightarrow {\det}^{-1}(m), \quad \varphi_m(X)=D_mX.
\]
The map is well defined, since
\[
\det(\varphi_m(X))=\det(D_mX)=\det(D_m)\det(X)=m\cdot 1=m.
\]
Its inverse is given by \(Y\mapsto D_m^{-1}Y\), and hence it is a bijection. Therefore,
\[
|{\det}^{-1}(m)|=|\mathrm{SL}_2(\mathbb{Z}_{p^k})|
\]
for every \(m\in U(\mathbb{Z}_{p^k})\). Thus, all fibers of the determinant map have the same cardinality.
\end{proof}
 
The preceding proof is valid for any finite commutative ring $R$, not only for $\mathbb{Z}_{p^k}$. The only requirement is that $m\in U(R)$ and that the matrix $D_m$ be invertible, which holds because $m$ is a unit.

\begin{proposition}\label{Uksikkokuidutmaara}
If \(m\in U({\mathbb{Z}_{p^k}})\), then
\[
a_{p^k}(m)=p^{3k-2}(p^2-1).
\]
\end{proposition}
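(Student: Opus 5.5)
The plan is to combine Lemma \ref{GLmaara} with Lemma \ref{Tasakuitu}. By Propositions \ref{Normi} and \ref{Pkmatriisi}, $a_{p^k}(m)$ equals the number of matrices $A\in M_2(\mathbb{Z}_{p^k})$ with $\det(A)=m$. When $m\in U(\mathbb{Z}_{p^k})$, every such matrix lies in $\mathrm{GL}_2(\mathbb{Z}_{p^k})$, so $a_{p^k}(m)=|{\det}^{-1}(m)|$ for the determinant map $\det:\mathrm{GL}_2(\mathbb{Z}_{p^k})\to U(\mathbb{Z}_{p^k})$.

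The group $\mathrm{GL}_2(\mathbb{Z}_{p^k})$ is the disjoint union of the fibres ${\det}^{-1}(u)$, $u\in U(\mathbb{Z}_{p^k})$. By Lemma \ref{Tasakuitu}, these fibres all have the common cardinality $|\mathrm{SL}_2(\mathbb{Z}_{p^k})|$. Hence
\[
|\mathrm{GL}_2(\mathbb{Z}_{p^k})| = |U(\mathbb{Z}_{p^k})|\cdot a_{p^k}(m).
\]

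Now insert $|\mathrm{GL}_2(\mathbb{Z}_{p^k})|=p^{4k-3}(p-1)(p^2-1)$ from Lemma \ref{GLmaara} and $|U(\mathbb{Z}_{p^k})|=p^{k-1}(p-1)$ from Proposition \ref{YksRom}(b). Dividing gives
\[
a_{p^k}(m)=\frac{p^{4k-3}(p-1)(p^2-1)}{p^{k-1}(p-1)}=p^{3k-2}(p^2-1).
\]

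No step here is a genuine obstacle, since the counting is immediate once the two lemmas are available. The only point that needs care is the first one: the identification of the norm fibre with the determinant fibre, and the observation that a matrix with unit determinant is automatically in $\mathrm{GL}_2$. The latter follows from the adjugate formula $A^{-1}=\det(A)^{-1}\operatorname{adj}(A)$. As a sanity check, taking $k=1$ recovers $|\mathrm{SL}_2(\mathbb{F}_p)|=p(p^2-1)$.
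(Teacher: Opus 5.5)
Your proposal is correct and follows essentially the paper's route: you use the equal-fibre lemma to identify $a_{p^k}(m)$ with $|\mathrm{SL}_2(\mathbb{Z}_{p^k})|$, and then divide $|\mathrm{GL}_2(\mathbb{Z}_{p^k})|=p^{4k-3}(p-1)(p^2-1)$ by $|U(\mathbb{Z}_{p^k})|=p^{k-1}(p-1)$. The only difference is cosmetic: the paper gets $|\mathrm{SL}_2|=|\mathrm{GL}_2|/|U|$ from the first isomorphism theorem for the surjective homomorphism $\det$, whereas you count the partition of $\mathrm{GL}_2$ into equal-size determinant fibres, which is the same argument.
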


\begin{proof}
By the previous lemma,
\[
a_{p^k}(m)=|\mathrm{SL}_2(\mathbb{Z}_{p^k})|.
\]
The determinant induces a surjective group homomorphism
\[
\det:\mathrm{GL}_2({\mathbb{Z}_{p^k}})\longrightarrow U({\mathbb{Z}_{p^k}}),
\]
whose kernel is
\[
\mathrm{ker(det)}=\mathrm{SL}_2(\mathbb{Z}_{p^k})=\{A\in \mathrm{GL}_2(\mathbb{Z}_{p^k}) : \det(A)=1\}.
\]
By the First Isomorphism Theorem,
\[
\mathrm{GL}_2(\mathbb{Z}_{p^k})/\mathrm{SL}_2(\mathbb{Z}_{p^k}) \cong\mathrm{im(det)}=U(\mathbb{Z}_{p^k}),
\]
and hence
\[
|\mathrm{SL}_2(\mathbb{Z}_{p^k})|
=
\frac{|\mathrm{GL}_2(\mathbb{Z}_{p^k})|}{|U(\mathbb{Z}_{p^k})|}.
\]
Lemma \ref{GLmaara} gives
\[
|\mathrm{GL}_2(\mathbb{Z}_{p^k})|
=
p^{4k-3}(p-1)(p^2-1),
\]
and Proposition \ref{YksRom} gives
\[
|U(\mathbb{Z}_{p^k})|
=
p^{k-1}(p-1),
\]
thus
\[
a_{p^k}(m)
=
\frac{p^{4k-3}(p-1)(p^2-1)}
{p^{k-1}(p-1)}
=
p^{3k-2}(p^2-1).
\]
\end{proof}

\subsection{Counting matrices with nonunit determinant}
Next, it is proved that the representation number $a_{p^k}(m)$ depends only on the \(p\)-adic valuation of \(m\).

\begin{theorem}\label{TasJakLause}
If \(m_1,m_2\in \mathbb Z_{p^k}\) satisfy
\[
v_p(m_1)=v_p(m_2),
\]
then
\[
a_{p^k}(m_1)=a_{p^k}(m_2).
\]
\end{theorem}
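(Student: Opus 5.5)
If $v_p(m_1)=v_p(m_2)=t$, then $m_1$ and $m_2$ differ by a unit factor: there is $u\in U(\mathbb{Z}_{p^k})$ with $m_2=um_1$. The plan is to prove this first and then reuse the scaling bijection from Lemma \ref{Tasakuitu}, which now acts on all matrices rather than only on $\mathrm{GL}_2$. By Propositions \ref{Normi} and \ref{Pkmatriisi} it suffices to count matrices $A\in M_2(\mathbb{Z}_{p^k})$ with $\det(A)=m$.

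\textbf{Unit factor.} If $t=k$, then $m_1=m_2=0$ and there is nothing to prove. If $0\le t\le k-1$, take integer representatives $a_1,a_2$ of $m_1,m_2$. By the definition of $v_p$, we can write $a_i=p^t b_i$ with $p\nmid b_i$. By Corollary \ref{nojakkaaa}, $b_1$ is a unit in $\mathbb{Z}_{p^k}$. Put $u=b_2b_1^{-1}\in U(\mathbb{Z}_{p^k})$. Then $um_1=p^tb_1b_2b_1^{-1}=p^tb_2=m_2$ in $\mathbb{Z}_{p^k}$.

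\textbf{Bijection.} Let $D_u=\begin{pmatrix}u&0\\0&1\end{pmatrix}$, which is invertible because $\det D_u=u$ is a unit. Define
\[
\psi_u:\{A\in M_2(\mathbb{Z}_{p^k}):\det A=m_1\}\longrightarrow\{A\in M_2(\mathbb{Z}_{p^k}):\det A=m_2\},\qquad \psi_u(A)=D_uA.
\]
This map is well defined, since $\det(D_uA)=u\,\det A=um_1=m_2$. It is a bijection, with inverse $B\mapsto D_u^{-1}B$, because $\det(D_u^{-1}B)=u^{-1}m_2=m_1$. Neither $A$ nor $B$ needs to be invertible for this, because multiplicativity of $\det$ holds on all of $M_2$ over a commutative ring. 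Hence $a_{p^k}(m_1)=a_{p^k}(m_2)$.

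The only step needing any care is the factorization $m=p^t\cdot(\text{unit})$. Here one must check that the unit $b_i$ is well defined modulo $p^k$ despite the choice of representative, or simply work with the fixed representatives in $\{0,\dots,p^k-1\}$ as the paper's definition of $v_p$ does. The rest is identical to the proof of Lemma \ref{Tasakuitu}.
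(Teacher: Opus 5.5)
Your proposal is correct and follows essentially the same route as the paper: write $m_2=um_1$ with $u\in U(\mathbb{Z}_{p^k})$, then use left multiplication by $D_u$ as a bijection between the two determinant fibres. Your explicit handling of the case $t=k$, of the choice of representatives, and of the inverse map $B\mapsto D_u^{-1}B$ only spells out details the paper leaves implicit.
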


\begin{proof}
Assume that \(t=v_p(m_1)=v_p(m_2)\), that is, $m_1=ap^t$ and $m_2=bp^t$, where $a$ and $b$ are not divisible by $p$, or equivalently, $a,b\in U(\mathbb Z_{p^k})$. Then there exists a unit
\(u=ba^{-1}\in U(\mathbb Z_{p^k})\) such that
\[
m_2=um_1.
\]
Let
\[
D_u=
\begin{pmatrix}
u&0\\
0&1
\end{pmatrix}.
\]
The mapping
\[
A\longmapsto D_uA
\]
is a bijection from \(\det^{-1}(m_1)\) onto
\(\det^{-1}(m_2)\), since
\[
\det(D_uA)
=\det(D_u)\det(A)
=u\,m_1
=m_2.
\]
Therefore, these fibers have the same cardinality.
\end{proof}

\subsection{Complement Formula for the Zero Fiber}

Let \(0 \le t \le k-1\). Define
\[
A_t := a_{p^k}(m),
\]
where \(m \in \mathbb{Z}_{p^k}\) satisfies \(v_p(m)=t\). This is well defined by Theorem \ref{TasJakLause}. The following lemma is now proved. It states that, if the cardinalities $A_t$ of all nonzero determinant fibers are known, then the cardinality of the zero fiber $a_{p^k}(0)$ can be computed by subtracting them from the total number of matrices.

\begin{lemma}\label{NollakuidunKkaava}
With the above notation,
\[
a_{p^k}(0)
=
p^{4k}
-
\sum_{t=0}^{k-1}
p^{k-t-1}(p-1)\, A_t.
\]
\end{lemma}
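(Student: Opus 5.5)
The plan is a direct counting argument: partition $M_2(\mathbb{Z}_{p^k})$ according to the value of the determinant, and then group the values by their $p$-adic valuation. Since $\det:M_2(\mathbb{Z}_{p^k})\to\mathbb{Z}_{p^k}$ is a function, its fibres over distinct values are pairwise disjoint, and together they cover the whole matrix ring. The ring has $(p^k)^4=p^{4k}$ elements, so
\[
p^{4k}=\sum_{m\in\mathbb{Z}_{p^k}} a_{p^k}(m).
\]
Here $a_{p^k}(m)$ denotes the fibre size $|\det^{-1}(m)|$, which by Propositions \ref{Normi} and \ref{Pkmatriisi} equals the number of quaternions of norm $m$.

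Next, I will split this sum using the partition of Lemma \ref{Padickkaava}:
\[
\mathbb{Z}_{p^k}=\bigsqcup_{t=0}^{k}\{m: v_p(m)=t\}.
\]
The class $t=k$ consists of the single element $m=0$, by the convention $v_p(0)=k$; the nonzero elements all have valuation at most $k-1$. So that class contributes exactly $a_{p^k}(0)$. For each $0\le t\le k-1$, Theorem \ref{TasJakLause} shows that every $m$ with $v_p(m)=t$ has the same fibre size $A_t$. Lemma \ref{Padickkaava} shows that the number of such $m$ is $p^{k-t-1}(p-1)$. Hence the $t$-th class contributes $p^{k-t-1}(p-1)A_t$, and
\[
p^{4k}=a_{p^k}(0)+\sum_{t=0}^{k-1}p^{k-t-1}(p-1)A_t .
\]
Rearranging this identity gives the claimed formula.

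I do not expect any real obstacle. The only point needing care is that the valuation classes for $t\le k-1$ are exactly the nonzero elements, so that $m=0$ is neither omitted nor counted twice. As a sanity check, the class sizes should total $p^k$: the sum $\sum_{t=0}^{k-1}p^{k-t-1}(p-1)=p^k-1$ telescopes, and adding the zero class gives $p^k$.
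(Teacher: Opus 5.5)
Your proposal is correct and matches the paper's proof: partition $M_2(\mathbb{Z}_{p^k})$ into determinant fibres, group the nonzero values by valuation using Lemma~\ref{Padickkaava} and Theorem~\ref{TasJakLause}, isolate the single class $v_p(0)=k$, and use $|M_2(\mathbb{Z}_{p^k})|=p^{4k}$. Your telescoping check that the class sizes sum to $p^k$ additionally confirms that $m=0$ is counted exactly once.
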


\begin{proof}
The fibers of the determinant map
\[
\det: M_2(\mathbb Z_{p^k}) \to \mathbb Z_{p^k}
\]
form a partition of the space \(M_2(\mathbb Z_{p^k})\) into the sets \(\det^{-1}(m)\), where \(m \in \mathbb Z_{p^k}\). Let $0\le t\le k-1$.
Lemma \ref{Padickkaava} states that
\[
|\{m:v_p(m)=t\}|
=
p^{k-t-1}(p-1).
\]
By Theorem \ref{TasJakLause}, the fibers corresponding to all such elements \(m\) have the same cardinality, namely \(A_t\). The element \(0\) does not belong to any of the preceding sets, since $v_p(0)=k$. Assume that the fiber over \(0\) contains \(a_{p^k}(0)\) elements. Therefore,
\[
|M_2(\mathbb Z_{p^k})|
=
a_{p^k}(0)
+
\sum_{t=0}^{k-1}
p^{k-t-1}(p-1)\, A_t,
\]
and since
\[
|M_2(\mathbb Z_{p^k})|=|\mathbb Z_{p^k}|^4=p^{4k},
\]
the desired result is obtained.
\end{proof}


\subsection{Main Theorem}

\begin{theorem}\label{paalause}
Let $p$ be an odd prime, $k\ge 1$, and $m\in \mathbb Z_{p^k}$. Let $t=v_p(m)$. Then the cardinality of the determinant fiber
\[
a_{p^k}(m)=\left|\{A\in M_2(\mathbb Z_{p^k}):\det(A)=m\}\right|
\]
is given by
\[ a_{p^k}(m)= \begin{cases} p^{3k-2}(p^2-1), & t=0,\\
p^{3k-2-t}(p+1)(p^{t+1}-1), & 0<t<k,\\
p^{2k-1}(p^{k+1}+p^k-1), & t=k. 
\end{cases} 
\]
\end{theorem}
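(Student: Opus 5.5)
The case $t=0$ is already Proposition \ref{Uksikkokuidutmaara}, and once $A_t$ is known for every $0\le t\le k-1$, the case $t=k$ follows from Lemma \ref{NollakuidunKkaava}. The real work is the middle case $0<t<k$. By Theorem \ref{TasJakLause} we may take $m=p^t$, so the task is to count matrices $A\in M_2(\mathbb Z_{p^k})$ with $\det A=p^t$.

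The plan is to use the Smith normal form. Every $A$ is $\mathrm{GL}_2$-equivalent to $\mathrm{diag}(p^a,p^b)$ with $0\le a\le b\le k$, and these diagonal forms are unique. Since $\det A$ is a unit times $p^{a+b}$ (read modulo $p^k$), the condition $v_p(\det A)=t<k$ forces $a+b=t$. In particular $b\le t<k$.

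The first step is to count the matrices in a given orbit. The orbit of $D=\mathrm{diag}(p^a,p^b)$ under $(U,V)\mapsto UDV^{-1}$ has size
\[
|\mathrm{GL}_2(\mathbb Z_{p^k})|^2/|\mathrm{Stab}(D)|.
\]
A cleaner route is to count directly by the minimal valuation $a$ of the entries. Write $A=p^aA'$, where $A'$ has some unit entry, so $A'$ is determined modulo $p^{k-a}$. Then $\det A'$ must have valuation exactly $b-a=t-2a$ modulo $p^{k-2a}$.

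The second step uses a standard computation. If a primitive matrix over $\mathbb Z_{p^r}$ has a unit entry, then row and column operations reduce it to $\mathrm{diag}(1,d)$. The number of primitive matrices with $\det=d$ is therefore $p^{2r-1}(p+1)(p-1)\cdot p^{r-1}$ divided appropriately: with a unit in position $(1,1)$, the entry $d$ is determined by the other three entries together with the $(2,2)$ entry. Concretely, the number of primitive $2\times2$ matrices over $\mathbb Z_{p^r}$ with a prescribed determinant is $p^{2r}(1-p^{-2})\cdot p^{r}$, independent of the determinant. This comes from counting a primitive first row or column ($p^{2r}-p^{2r-2}$ choices), after which the determinant condition is an affine condition on the other vector with $p^{r}$ solutions.

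The third step is to sum over $a$ from $0$ to $\lfloor t/2\rfloor$. We must account for the lifting factor: choosing the entries of $A$ modulo $p^k$ given $A'$ modulo $p^{k-a}$ contributes $p^{4a}$, and the fiber condition on the determinant value modulo $p^{k-2a}$ contributes a factor relating $p^{k}$ and $p^{k-2a}$ determinant classes. Summing the resulting geometric series should give $p^{3k-2-t}(p+1)(p^{t+1}-1)$. I would check this against $k=2$, $t=1$, $p=3$ by hand.

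I expect this lifting bookkeeping to be the main obstacle, so I will fix conventions carefully. For each $a$, I will count the pairs ($A'$ modulo $p^{k-a}$, lift), subject to $\det A=p^t$ exactly in $\mathbb Z_{p^k}$. This is equivalent to $\det A'\equiv p^{t-2a}\pmod{p^{k-2a}}$, but $A'$ is only defined modulo $p^{k-a}$, so the lifts of $A'$ to modulo $p^{k-2a}$ precision must be handled consistently.

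Finally, for $t=k$, substitute $A_0=p^{3k-2}(p^2-1)$ and $A_t$ into Lemma \ref{NollakuidunKkaava}. Then evaluate
\[
p^{4k}-(p-1)(p+1)\sum_{t=0}^{k-1}p^{k-t-1}p^{3k-2-t}(p^{t+1}-1)
\]
as a pair of geometric sums; note that at $t=0$ the middle formula agrees with $A_0$, so the sum can be taken uniformly. This should simplify to $p^{2k-1}(p^{k+1}+p^k-1)$, which is routine algebra.
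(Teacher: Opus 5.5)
\textbf{Gap: the primitive count in your second step is wrong.} Your stratification by the minimal entry valuation $a$ (write $A=p^aA'$ with $A'$ primitive, then count primitive matrices over a smaller ring) is a legitimate alternative to the paper's orbit--stabilizer route. The problem is the count you feed into it: the number $N_r(d)$ of primitive matrices over $\mathbb Z_{p^r}$ with $\det=d$ is \emph{not} independent of $d$. Choosing a primitive first row and then solving the surjective affine condition on the second row only counts matrices whose first row is primitive. When $d$ is a unit, every matrix with $\det=d$ has this property, which gives $p^{3r-2}(p^2-1)$. When $d\in\langle p\rangle$, however, a primitive matrix can have first row $\equiv 0\pmod p$ and a primitive second row, e.g.\ $\mathrm{diag}(p,1)$. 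For each primitive second row $(z,w)$, the map $(x,y)\mapsto xw-yz$ sends $\langle p\rangle^2$ onto $\langle p\rangle$ with fibres of size $p^{r-1}$. This adds $(p^{2r}-p^{2r-2})p^{r-1}$ matrices, so
\[
N_r(d)=p^{3r-3}(p^2-1)(p+1)\qquad\text{for } d\in\langle p\rangle .
\]
Already for $r=1$, $d=0$ there are $(p^2-1)(p+1)$ rank-one matrices over $\mathbb Z_p$, not $p(p^2-1)$. In every stratum with $2a<t$ the reduced determinant $p^{t-2a}$ is a non-unit, so your value undercounts that stratum by the factor $p/(p+1)$. For odd $t$ your sum would come out as $p^{3k-1-t}(p^{t+1}-1)$; in your own test case $p=3$, $k=2$, $t=1$ this gives $648$ instead of $864$.

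\textbf{The lifting step is set up backwards.} You leave it unresolved, but the direction is the issue. $A=p^aA'$ is determined by $A'\bmod p^{k-a}$, so there is no factor $p^{4a}$ in passing from $A'$ to $A$. Instead, the condition $\det A'\equiv p^{t-2a}\pmod{p^{k-2a}}$ depends only on $A'\bmod p^{k-2a}$; note $k-2a\ge 1$ because $2a\le t<k$. Each primitive class modulo $p^{k-2a}$ has exactly $p^{4a}$ lifts modulo $p^{k-a}$, all primitive. Hence stratum $a$ contributes $p^{4a}N_{k-2a}(p^{t-2a})$.

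With the corrected $N$, the strata $a<t/2$ give $(p^2-1)(p+1)p^{3k-3}\sum_{0\le a<t/2}p^{-2a}$. When $t$ is even there is one more stratum, $a=t/2$, where the reduced determinant is a unit; it contributes $p^{2t}\,p^{3(k-t)-2}(p^2-1)=(p^2-1)p^{3k-t-2}$. In both parities the total is $p^{3k-2-t}(p+1)(p^{t+1}-1)$.

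With these two repairs your argument is complete, and it avoids the paper's three stabilizer lemmas and its uniform-distribution lemma. Your $t=k$ step via Lemma~\ref{NollakuidunKkaava} is fine and is essentially the paper's.
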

The first case has already been established in Proposition \ref{Uksikkokuidutmaara}.
The remaining cases are proved in the following section; see formulas (\ref{Kaavai<j})
and (\ref{Tapausk=Nolla}). \\
\\
The case $k=1$ is of particular interest, since $\mathbb{Z}_p$ is a finite field, and therefore every nonzero element is invertible. In this case, an element $m\in\mathbb{Z}_p$ has only the two boundary cases: $t=v_p(m)=0$ when $m$ is invertible, and $t=v_p(0)=1$ when $m=0$. The fiber cardinalities are then
\[
a_{p^k}(m)=
\begin{cases}
p(p^2-1), & m\neq 0,\\
p(p^{2}+p-1), & m=0.
\end{cases}
\]
This also shows that the $p$-adic valuation values $0<t<k$ occur only in the rings $\mathbb{Z}_{p^k}$ with $k\ge 2$. This is due to the existence of nonzero noninvertible elements in these rings.

\section{Proof of the Main Theorem}
The proof is based on the Smith normal form, which partitions the matrices into small, manageable classes. Within each class, the \(p\)-adic valuation of the determinant is constant, while the unit part of the determinant is uniformly distributed. Consequently, the cardinality of each determinant fiber is obtained by summing the cardinalities of the appropriate classes and dividing by the number of units.

\subsection{Smith Normal Form}

Since \(  \mathbb{Z}_{p^k}\) is a principal ideal ring, every matrix \(A \in M_2(\mathbb{Z}_{p^k})\) can be written in Smith normal form (see formula (\ref{NULJOK}) and \cite[Theorem 3.2]{Kaplansky}),
\[
A = U D_{i,j} V^{-1},
\]
where \(U,V \in \mathrm{GL}_2(\mathbb{Z}_{p^k})\) and
\[
D_{i,j} = \begin{pmatrix} p^i & 0 \\ 0 & p^j \end{pmatrix}, \qquad 0 \le i \le j \le k.
\]
The Smith normal form is unique: the exponents \((i,j)\) are uniquely determined by the condition \(0\le i\le j\le k\). The Smith classes
\[
\mathcal O(D_{i,j}) = \{\, U D_{i,j} V^{-1} \mid U,V \in \mathrm{GL}_2(\mathbb{Z}_{p^k}) \,\}
\]
are precisely the \(\mathrm{GL}_2(\mathbb{Z}_{p^k})\times \mathrm{GL}_2(\mathbb{Z}_{p^k})\)-orbits under the action
\[
(U,V)\cdot A = U A V^{-1}.
\]
Thus, the group \(\mathrm{GL}_2(\mathbb{Z}_{p^k}) \times \mathrm{GL}_2(\mathbb{Z}_{p^k})\) acts on \(M_2(\mathbb{Z}_{p^k})\) by left and right multiplication. The Smith normal form classes are the orbits of this action. The orbit cardinality is given by the orbit--stabilizer formula:
\[
|\mathcal O(D_{i,j})| = \frac{|\mathrm{GL}_2(\mathbb{Z}_{p^k})|^2}{|\operatorname{Stab}(D_{i,j})|},
\]
where
\[
\operatorname{Stab}(D_{i,j}) = \{ (U,V) \in \mathrm{GL}_2(\mathbb{Z}_{p^k})^2 \mid U D_{i,j} V^{-1} = D_{i,j} \}
= \{ (U,V) \in \mathrm{GL}_2(\mathbb{Z}_{p^k})^2 \mid U D_{i,j} = D_{i,j} V \}.
\]
The valuation of the determinant is determined as follows. If
\[
A = U D_{i,j} V^{-1},
\]
then
\[
\det(A) = \det(U)\det(D_{i,j})\det(V)^{-1}
= p^{i+j}\frac{\det(U)}{\det(V)}.
\]
Since \(\det(U), \det(V) \in U(\mathbb{Z}_{p^k})\), it follows that
\[
u:=\frac{\det(U)}{\det(V)} \in U(\mathbb{Z}_{p^k}).
\]
Hence, by (\ref{potemzo}),
\[
\det(A)=
\begin{cases}
p^{i+j}u, & \text{if } 0\le i+j<k,\\
0, & \text{if } i+j\ge k.
\end{cases}
\]
Thus, the \(p\)-adic valuation of the determinant is determined solely by the sum \(i+j\). Every Smith class with \(i+j=t<k\) consists of matrices whose determinants have \(p\)-adic valuation \(t\). Consequently, the set
\[
\{A\in M_2(\mathbb{Z}_{p^k}) : v_p(\det A)=t\}
=
\bigsqcup_{i+j=t}\mathcal O(D_{i,j})
\]
is the disjoint union of the corresponding Smith classes. Similarly, the zero fiber
\[
{\det}^{-1}(0)
=
\bigsqcup_{i+j\ge k}\mathcal O(D_{i,j})
\]
is the disjoint union of those Smith classes for which \(i+j\ge k\).\\

Since \(\mathbb{Z}_{p^k}\) is a commutative principal ideal ring, the Smith normal form is unique (see \cite[Theorem 9.3]{Kaplansky}). Therefore, the Smith classes form a partition of the matrix ring \(M_2(\mathbb{Z}_{p^k})\):
\begin{align}\label{orbitositus}
M_2(\mathbb Z_{p^k})
=
\bigsqcup_{0\le i\le j\le k}
\mathcal O(D_{i,j}).
\end{align}
Moreover, these orbits satisfy several natural properties, which are summarized in the following proposition.

\begin{proposition}[Classification of Orbits]
Let
\[
D_{i,j}=\begin{pmatrix}p^i&0\\0&p^j\end{pmatrix},
\qquad
0\le i\le j\le k.
\]
Then
\begin{enumerate}
    \item Every matrix \(A\in M_2(\mathbb Z_{p^k})\) belongs to exactly one orbit

   \[
   \mathcal O(D_{i,j}).
   \]
\item If

   \[
\mathcal O(D_{i,j})=\mathcal O(D_{i',j'}),
   \]

   then

   \[
   (i,j)=(i',j').
   \]
\item In particular,

   \[
   \mathcal O(D_{i,j})\cap
   \mathcal O(D_{i',j'})
   =\varnothing,
   \qquad
   (i,j)\neq(i',j').
   \]
\end{enumerate}
   
\end{proposition}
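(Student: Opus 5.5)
The plan is to prove the three assertions in the order existence, uniqueness, disjointness. Assertion 3 follows formally from 2, because orbits of a group action are either equal or disjoint. For assertion 1 I would not lean only on the citation of \cite{Kaplansky}; instead I would give a direct elimination argument that uses the local structure of $\mathbb Z_{p^k}$ (Corollary \ref{nojakkaaa}). Every nonzero element can be written as $p^s w$ with $w\in U(\mathbb Z_{p^k})$ and $s=v_p$.

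\emph{Existence.} Given $A\ne 0$, I would choose an entry of minimal valuation $i$ and move it to position $(1,1)$ by permutation matrices, which lie in $\mathrm{GL}_2(\mathbb Z_{p^k})$. Writing that entry as $p^i w$, I would multiply by $\mathrm{diag}(w^{-1},1)$ to make it exactly $p^i$. Every other entry is divisible by $p^i$, so elementary row and column operations with coefficients in $\mathbb Z_{p^k}$ clear the rest of the first row and the first column. The remaining $(2,2)$ entry is still divisible by $p^i$; write it as $p^j w'$ with $j\ge i$, or set $j=k$ if it is $0$. Scaling by $\mathrm{diag}(1,w'^{-1})$ then gives $D_{i,j}$. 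The case $A=0$ is $D_{k,k}$.

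\emph{Uniqueness.} I would attach two invariants to each matrix that are preserved under $A\mapsto UAV^{-1}$.
\begin{itemize}
\item[(i)] The first invariant is the ideal $I_1(A)$ generated by the entries of $A$. Each entry of $UAV^{-1}$ is a $\mathbb Z_{p^k}$-linear combination of entries of $A$, so $I_1(UAV^{-1})\subseteq I_1(A)$. Applying the same argument with $U^{-1},V$ gives equality. For $D_{i,j}$ this ideal is $\langle p^i\rangle$, and distinct exponents $0\le i\le k$ give distinct ideals by (\ref{ideaalinkoko}). Hence $i$ is determined.
\item[(ii)] The second invariant is the size of the column space $A\,\mathbb Z_{p^k}^2$. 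Because $V^{-1}$ permutes $\mathbb Z_{p^k}^2$ and $U$ is a bijection, $|UAV^{-1}\mathbb Z_{p^k}^2|=|A\mathbb Z_{p^k}^2|$. For $D_{i,j}$ this image is $\langle p^i\rangle\times\langle p^j\rangle$, of size $p^{2k-i-j}$. Once $i$ is known, this size determines $j$.
\end{itemize}
So $\mathcal O(D_{i,j})=\mathcal O(D_{i',j'})$ forces $D_{i',j'}=UD_{i,j}V^{-1}$, and the two invariants give $(i,j)=(i',j')$.

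The main obstacle is uniqueness. The obvious second invariant, $\det$ up to units, fails: it collapses to $0$ whenever $i+j\ge k$, so it cannot separate, for example, $D_{1,k-1}$ from $D_{1,k}$. That is why I use the cardinality of the image module instead of the determinantal ideal.
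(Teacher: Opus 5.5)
Your proof is correct, but it takes a genuinely different route from the paper.

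The paper's proof is a one-line appeal to existence and uniqueness of the Smith normal form over the principal ideal ring $\mathbb Z_{p^k}$ (the Kaplansky citations given just before the proposition), and (1)--(3) are read off from that. You instead prove both halves directly:
\begin{itemize}
\item \emph{Existence.} You pivot on an entry of minimal valuation. This works because $\mathbb Z_{p^k}$ is local, so every nonzero element is $p^s w$ with $w$ a unit.
\item \emph{Uniqueness.} You use two invariants of $A\mapsto UAV^{-1}$. The content ideal $I_1(A)=\langle p^i\rangle$ fixes $i$. The cardinality $|A\,\mathbb Z_{p^k}^2|=p^{2k-i-j}$ then fixes $j$.
\end{itemize}

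The paper's version is shorter, but it outsources the only delicate point. Yours is self-contained and uses only facts already proved in the paper: units are the elements prime to $p$, and $|\langle p^i\rangle|=p^{k-i}$.

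Your version also explicitly handles the zero-divisor issue you identify. The determinantal ideal $\langle\det A\rangle=\langle p^{i+j}\rangle$ collapses to $0$ once $i+j\ge k$. So the textbook PID uniqueness argument via determinantal ideals cannot separate, e.g., $D_{1,k-1}$ from $D_{1,k}$. Your counting invariant can, since it is in effect the order of the cokernel. It also matches the counting style used throughout the rest of the paper.

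One point is worth stating explicitly: the ``exactly one'' in (1) needs (3), i.e.\ that orbits of a group action partition $M_2(\mathbb Z_{p^k})$. Your ordering of the argument already supplies this.
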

\begin{proof}
This is an immediate consequence of the Smith normal form over the ring
\(\mathbb Z_{p^k}\): every matrix is left--right equivalent to a unique diagonal form
\[
\begin{pmatrix}p^i&0\\0&p^j\end{pmatrix},
\qquad 0\le i\le j\le k.
\]
The uniqueness of the Smith normal form proves part (2), and
part (3) follows immediately from parts (1)--(2).
\end{proof}

\subsection{Sizes of Stabilizers}

We now consider the stabilizer
\[
\operatorname{Stab}(D_{i,j})=\{(U,V): UD_{i,j}=D_{i,j}V\},
\]
where
\[
U = \begin{pmatrix} a & b \\ c & d \end{pmatrix}, \qquad
V = \begin{pmatrix} e & f \\ g & h \end{pmatrix}.
\]
The equation \(U D_{i,j} = D_{i,j} V\) gives rise to congruence conditions that depend on \(i\), \(j\), and \(k\). These conditions can be solved systematically.

\begin{lemma}\label{StabKoko1}
Let \(p\) be an odd prime, \(k\ge 1\), and let \(0 \le i < j < k\). Define
\[
D_{i,j} = \begin{pmatrix} p^i & 0 \\ 0 & p^j \end{pmatrix}.
\]
Then
\[
|\operatorname{Stab}(D_{i,j})| = p^{4k+3i+j-2}(p-1)^2.
\]
\end{lemma}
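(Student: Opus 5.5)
The plan is to parametrize the stabilizer directly: write $U$ and $V$ entrywise as in the text and turn the matrix identity $UD_{i,j}=D_{i,j}V$ into four scalar congruences in $\mathbb Z_{p^k}$. Comparing entries of
\[
UD_{i,j}=\begin{pmatrix} p^i a & p^j b\\ p^i c & p^j d\end{pmatrix},\qquad
D_{i,j}V=\begin{pmatrix} p^i e & p^i f\\ p^j g & p^j h\end{pmatrix}
\]
gives the conditions $p^i(a-e)=0$, $p^i(f-p^{j-i}b)=0$, $p^i(c-p^{j-i}g)=0$ and $p^j(d-h)=0$ in $\mathbb Z_{p^k}$. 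Since $p^s x=0$ in $\mathbb Z_{p^k}$ if and only if $x\in\langle p^{k-s}\rangle$, these are equivalent to
\[
e\equiv a,\quad f\equiv p^{j-i}b,\quad c\equiv p^{j-i}g \pmod{p^{k-i}},\qquad h\equiv d\pmod{p^{k-j}}.
\]

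Next I will handle invertibility, which is where the hypothesis $0\le i<j<k$ enters. From $j>i$ and $k-i\ge 2$, the congruences force $p\mid c$ and $p\mid f$. Hence $\det U\equiv ad\pmod p$ and $\det V\equiv eh\pmod p$. By Proposition \ref{YksRom}(c) and Corollary \ref{nojakkaaa}, $U\in\mathrm{GL}_2(\mathbb Z_{p^k})$ if and only if $a,d$ are units, and $V$ is invertible if and only if $e,h$ are units. Since $k-i\ge1$ and $k-j\ge1$, we have $e\equiv a\pmod p$ and $h\equiv d\pmod p$. So once $a,d$ are units, $e,h$ are automatically units. The only invertibility constraint is therefore that $a$ and $d$ be units.

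The count then proceeds as follows. Choose $a,d\in U(\mathbb Z_{p^k})$, giving $(p^{k-1}(p-1))^2$ choices by Proposition \ref{YksRom}(b). Choose $b,g$ freely, giving $p^{2k}$ choices. Each of $e$, $f$, $c$ is then determined modulo $p^{k-i}$, i.e.\ it lies in a fixed coset of $\langle p^{k-i}\rangle$, which has $p^{i}$ elements by (\ref{ideaalinkoko}). Finally $h$ lies in a coset of $\langle p^{k-j}\rangle$, giving $p^j$ choices. Multiplying yields
\[
p^{2k-2}(p-1)^2\cdot p^{2k}\cdot p^{3i}\cdot p^{j}=p^{4k+3i+j-2}(p-1)^2.
\]

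The main point needing care is the invertibility reduction: one must check that every tuple satisfying the congruences, with $a,d$ units, really gives invertible $U$ and $V$, so that no extra constraints are lost. This relies on $c,f\in\langle p\rangle$, which uses $j>i$, and on $k-j\ge1$, which uses $j<k$. For this reason the boundary cases $i=j$ or $j=k$ must be treated separately.
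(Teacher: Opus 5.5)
Your proposal is correct and follows essentially the same route as the paper. You reduce $UD_{i,j}=D_{i,j}V$ to the same four congruences, use $c,f\in\langle p\rangle$ (from $j>i$) and $k-j\ge 1$ to get $\det U\equiv\det V\equiv ad \pmod p$, and then count $a,d$ as units, $b,g$ freely, and $e,f,c,h$ in cosets of sizes $p^i,p^i,p^i,p^j$. The paper additionally cross-checks the size $p^{4k+3i+j}$ of the solution set via the first isomorphism theorem applied to $T_{ij}$, but the argument does not need this.
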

\begin{proof} 
Write
\[
U = \begin{pmatrix} a & b \\ c & d \end{pmatrix}, \qquad
V = \begin{pmatrix} e & f \\ g & h \end{pmatrix},
\]
where $U,V\in M_2(\mathbb{Z}_{p^k})$.
The equation \(U D_{i,j} = D_{i,j} V\) is
\[
\begin{pmatrix}
a p^i & b p^j \\
c p^i & d p^j
\end{pmatrix}
=
\begin{pmatrix}
p^i e & p^i f \\
p^j g & p^j h
\end{pmatrix},
\]
which is equivalent to the following four linear equations:
\[
p^i e = p^i a, \quad p^i f = p^j b, \quad p^j g = p^i c, \quad p^j h = p^j d,
\]
over the ring $\mathbb{Z}_{p^k}$,
or equivalently (since $i<j$),
\begin{align*}
    e\equiv&\ a \mod p^{k-i}\\
    f\equiv&\ bp^{j-i} \mod p^{k-i}\\
    c\equiv&\ gp^{j-i} \mod p^{k-i}\\
    h\equiv&\ d \mod p^{k-j}.
\end{align*}
Define
\begin{align}\label{Tparam}
T_{ij}:M_2(\mathbb{Z}_{p^k})\times M_2(\mathbb{Z}_{p^k})\to M_2(\mathbb{Z}_{p^k});\ T(U,V)=U D_{i,j} - D_{i,j} V.
\end{align}
Then
\[
\ker(T_{ij})=\{(U,V): T_{ij}\equiv 0 \mod p^k\}=\{(U,V):U D_{i,j} = D_{i,j} V\}.
\]
We solve the equations explicitly:

\begin{align*}
p^i e = p^i a &\iff e-a \in  \langle p^{k-i}\rangle\ \iff e = a + p^{k-i}\alpha, \quad \alpha \in \mathbb Z_{p^i},\\
p^i f = p^j b &\iff f - p^{j-i}b \in \langle p^{k-i}\rangle \iff f = p^{j-i}b + p^{k-i}\beta, \quad \beta \in \mathbb Z_{p^i},\\
p^j g = p^i c &\iff c - p^{j-i}g \in  \langle p^{k-i}\rangle \iff c = p^{j-i}g + p^{k-i}\gamma, \quad \gamma \in \mathbb Z_{p^i},\\
p^j h = p^j d &\iff h-d \in  \langle p^{k-j}\rangle \iff h = d + p^{k-j}\eta, \quad \eta \in \mathbb Z_{p^j}.
\end{align*}
Let
\[
T_{ij}(U,V)=\begin{pmatrix}
a p^i-p^i e & b p^j-p^i f \\
c p^i-p^j g & d p^j-p^j h
\end{pmatrix}
=
\begin{pmatrix}
x_{1,1} & x_{1,2} \\
x_{2,1} & x_{2,2}
\end{pmatrix}.
\]
Here
\begin{itemize}
    \item $x_{1,1}= p^i(a-e)\in \langle p^i\rangle$,
    \item $x_{1,2}=p^{i}(bp^{j-i} - f)\in \langle p^i\rangle$,
    \item $x_{2,1}=p^i(c-gp^{j-i})\in \langle p^i\rangle$,
    \item $x_{2,2}= p^j(d- h)\in \langle p^j\rangle$.   
\end{itemize}
Hence
\[
\operatorname{Im}(T_{ij}) = 
\begin{pmatrix}
\langle p^i \rangle & \langle p^i \rangle \\
\langle p^i \rangle & \langle p^j \rangle
\end{pmatrix}.
\]
Observe that
\begin{align*}
    T_{ij}((U_1,V_1)+(U_2,V_2))&=T_{ij}(U_1+U_2,V_1+V_2)\\
    &=(U_1+U_2)D_{ij}-D_{ij}(V_1+V_2)\\
    &=U_1D_{ij}-D_{ij}V_1+U_2D_{ij}-D_{ij}V_2\\
    &=T_{ij}(U_1,V_1)+T_{ij}(U_2,V_2)
\end{align*}
and similarly,
\[
T_{ij}(r(U,V))=rT_{ij}(U,V),\qquad r\in\mathbb{Z}_{p^k},
\]
so \(T_{ij}\) is a linear map between modules.
The first isomorphism theorem yields
\[
 \big(M_2(\mathbb{Z}_{p^k})\times M_2(\mathbb{Z}_{p^k})\big)/\ker(T_{ij})\cong \operatorname{Im}(T_{ij}).
\]
Since
\[
|M_2(\mathbb{Z}_{p^k})|=p^{4k}
\]
and, by (\ref{ideaalinkoko}),
\[
|\operatorname{Im}(T_{ij})|=(p^{k-i})^3 p^{k-j}=p^{4k-3i-j},
\]
the size of the kernel is
\[
|\ker(T_{ij})|=\frac{|M_2(\mathbb{Z}_{p^k})|^2}{|\operatorname{Im}(T_{ij})|}=\frac{p^{8k}} {p^{4k-3i-j}}=p^{4k+3i+j}.
\]
For verification, we also compute the size of the kernel directly. If $(U,V)\in \ker(T_{ij})$, then
\[
U = \begin{pmatrix} a & b \\ p^{j-i}g + p^{k-i}\gamma & d \end{pmatrix}, \qquad
V = \begin{pmatrix} a + p^{k-i}\alpha & p^{j-i}b + p^{k-i}\beta \\ g & d + p^{k-j}\eta \end{pmatrix},
\]
where $\alpha, \beta, \gamma\in \mathbb Z_{p^i}$ and $\eta \in \mathbb Z_{p^j}$.
The parameters can be chosen as follows:
\begin{itemize}
    \item $a,b,d,g\in\mathbb{Z}_{p^k}$, each with $p^k$ choices,
    \item $\alpha, \beta, \gamma\in \mathbb Z_{p^i}$, each with $p^{i}$ choices,
    \item $\eta \in \mathbb Z_{p^j}$, with $p^{j}$ choices.
\end{itemize}
Hence
\[
|\ker(T_{ij})|=(p^k)^4\cdot (p^{i})^3\cdot p^{j}=p^{4k+3i+j}.
\]
The invertible pairs $(U,V)$ are precisely
\[
(\mathrm{GL}_2(\mathbb{Z}_{p^k})\times \mathrm{GL}_2(\mathbb{Z}_{p^k}))\cap \ker(T_{ij}).
\]
By Proposition \ref{YksRom}, $a\in U(\mathbb{Z}_{p^k})$ if and only if $\gcd(a, p) = 1$, which is equivalent to $a\not\equiv 0 \mod p$.
Accordingly, the matrices must satisfy the additional conditions below. First observe that
\begin{align*}
   \det(U)&= ad-b(p^{j-i}g + p^{k-i}\gamma)\equiv ad  \pmod p,\\
   \det(V)&=(a + p^{k-i}\alpha)(d + p^{k-j}\eta)-g( p^{j-i}b + p^{k-i}\beta) \equiv ad  \pmod p.
\end{align*}
Now
 $\det(U),\det(V)\in U(\mathbb{Z}_{p^k})$ if and only if
\[
ad\not\equiv 0  \pmod p.
\]
This is equivalent to
\[
a\not\equiv 0  \pmod p\quad \text{and}\quad d\not\equiv 0  \pmod p.
\]
Thus the remaining parameters $b,g,\alpha,\beta,\gamma,\eta$ may be chosen arbitrarily, and, as shown above, there are altogether $(p^k)^2\cdot (p^i)^3\cdot p^j$ such choices.\\
\\
It remains to count the choices for $a$ and $d$. By Corollary \ref{nojakkaaa},
$a,d\notin \langle p\rangle$, and by (\ref{ideaalinkoko}) each can be chosen in
 \[
 |\mathbb{Z}_{p^k}\setminus \langle p\rangle |=|\mathbb{Z}_{p^k}|-|\langle p\rangle|=p^k-p^{k-1}=p^{k-1}(p-1)
 \]
ways. Consequently,
\[
|(\mathrm{GL}_2(\mathbb{Z}_{p^k})\times \mathrm{GL}_2(\mathbb{Z}_{p^k}))\cap \ker(T_{ij})|= p^{4k+3i+j-2}(p-1)^2.
\]
\end{proof}

\begin{lemma}\label{StabKoko2}
Let \(p\) be an odd prime, \(k\ge 1\), and let \(0 \le i < k\). Define
\[
D_{i,k} = \begin{pmatrix} p^i & 0 \\ 0 & 0 \end{pmatrix}.
\]
Then
\[
|\operatorname{Stab}(D_{i,k})|=p^{5k+3i-3}(p-1)^3.
\]
\end{lemma}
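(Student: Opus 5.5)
The plan is to repeat the method of Lemma \ref{StabKoko1}: write the stabilizer condition $UD_{i,k}=D_{i,k}V$ as linear congruences on the entries, parametrize the solution set explicitly, and then impose invertibility of $U$ and $V$ by reducing modulo $p$.

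With
\[
U=\begin{pmatrix}a&b\\c&d\end{pmatrix},\qquad V=\begin{pmatrix}e&f\\g&h\end{pmatrix},
\]
we have
\[
UD_{i,k}=\begin{pmatrix}ap^i&0\\cp^i&0\end{pmatrix},\qquad
D_{i,k}V=\begin{pmatrix}p^ie&p^if\\0&0\end{pmatrix}.
\]
Comparing entries gives three conditions: $p^i(e-a)=0$, $p^if=0$ and $p^ic=0$. The entries $b,d,g,h$ are unconstrained.

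As in the proof of Lemma \ref{StabKoko1}, these conditions are equivalent to
\[
e=a+p^{k-i}\alpha,\qquad f=p^{k-i}\beta,\qquad c=p^{k-i}\gamma,\qquad \alpha,\beta,\gamma\in\mathbb Z_{p^i}.
\]
Each of $e$, $f$, $c$ therefore has exactly $p^i$ admissible values once $a$ is fixed. Hence the solution set of $UD_{i,k}=D_{i,k}V$ in $M_2(\mathbb Z_{p^k})^2$ has $p^{5k}\cdot p^{3i}=p^{5k+3i}$ elements, coming from the free parameters $a,b,d,g,h$ and the parameters $\alpha,\beta,\gamma$. As a sanity check, one could also obtain this count via the first isomorphism theorem applied to $T_{ik}(U,V)=UD_{i,k}-D_{i,k}V$. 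Its image is
\[
\begin{pmatrix}\langle p^i\rangle&\langle p^i\rangle\\ \langle p^i\rangle&0\end{pmatrix},
\]
of size $p^{3(k-i)}$, and indeed $p^{8k}/p^{3k-3i}=p^{5k+3i}$.

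Next we impose invertibility. Since $i<k$, we have $k-i\ge1$, so $c\equiv0$ and $f\equiv0\pmod p$, and also $e\equiv a\pmod p$. Therefore
\[
\det U=ad-bc\equiv ad\pmod p,\qquad \det V=eh-fg\equiv ah\pmod p.
\]
By Corollary \ref{nojakkaaa}, both $U$ and $V$ are invertible if and only if $a$, $d$ and $h$ are all units. Each of these can be chosen in $p^{k-1}(p-1)$ ways. The parameters $b$ and $g$ remain free, contributing $p^{2k}$, and $\alpha,\beta,\gamma$ remain free, contributing $p^{3i}$. Multiplying,
\[
|\operatorname{Stab}(D_{i,k})|=p^{2k}\cdot p^{3i}\cdot\bigl(p^{k-1}(p-1)\bigr)^3=p^{5k+3i-3}(p-1)^3.
\]

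The only delicate point is the difference from Lemma \ref{StabKoko1}. Here the second diagonal entry is $0$, so $d$ and $h$ are no longer tied to each other, and both must be required to be units separately. This is exactly what produces the extra factor $p^{k-1}(p-1)$ in the count.
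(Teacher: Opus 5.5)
Your proof is correct and follows essentially the same route as the paper. You use the same parametrization $e=a+p^{k-i}\alpha$, $c,f\in\langle p^{k-i}\rangle$ with $b,d,g,h$ free, the same image-size cross-check via $T_{ik}$, and the same reduction $\det U\equiv ad$, $\det V\equiv ah \pmod p$ forcing $a,d,h$ to be units, giving the identical count $p^{2k}\cdot p^{3i}\cdot\bigl(p^{k-1}(p-1)\bigr)^3$.
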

\begin{proof}
Write
\[
U = \begin{pmatrix} a & b \\ c & d \end{pmatrix}, \qquad
V = \begin{pmatrix} e & f \\ g & h \end{pmatrix},
\]
where $U,V\in M_2(\mathbb{Z}_{p^k})$.
The equation \(U D_{i,k} = D_{i,k} V\) is
\[
\begin{pmatrix}
a p^i & 0 \\
c p^i & 0
\end{pmatrix}
=
\begin{pmatrix}
p^i e & p^i f \\
0 & 0
\end{pmatrix},
\]
We obtain
\[
a p^i=p^ie,\quad  c p^i=0,\quad p^i f=0,
\]
and, for the time being, no restrictions are imposed on the parameters $d,h\in \mathbb{Z}_{p^k}$. The solutions are
\begin{align*}
   &a p^i=p^ie\iff a-e\in\langle p^{k-i}\rangle \iff e=a+p^{k-i}\alpha,\ \alpha\in\mathbb{Z}_{p^i}, \\
   &c p^i=0 \iff c\in \langle p^{k-i} \rangle \iff c=\beta p^{k-i},\ \beta\in \mathbb{Z}_{p^i},\\
   &p^i f=0\iff  f\in \langle  p^{k-i}\rangle \iff f=\gamma p^{k-i},\ \gamma \in\mathbb{Z}_{p^i}.
\end{align*}
Define $T_{ik}$ as in (\ref{Tparam}), that is,
\[
T_{ik}(U,V)=\begin{pmatrix}
a p^i-p^i e & -p^i f \\
c p^i & 0
\end{pmatrix}
=
\begin{pmatrix}
x_{1,1} & x_{1,2} \\
x_{2,1} & x_{2,2}
\end{pmatrix}.
\]
Observe that
\begin{itemize}
    \item $x_{1,1}=p^i(a-e)\in\langle p^{i}\rangle$,
    \item $x_{1,2}=-p^i f\in \langle p^i\rangle$,
    \item $x_{2,1}=c p^i\in \langle p^i\rangle$,
    \item $x_{2,2}=0\in \langle p^k\rangle=\{0\}$,
\end{itemize}
that is,
\[
\operatorname{Im}(T_{ik})=
\begin{pmatrix}
\langle p^i \rangle & \langle p^i \rangle \\
\langle p^i \rangle & \{0\}
\end{pmatrix}.
\]
Hence, by (\ref{ideaalinkoko}),
\[
|\operatorname{Im}(T_{ik})|=(p^{k-i})^3=p^{3k-3i},
\]
and the kernel has cardinality
\[
|\ker(T_{ik})|=\frac{|M_2(\mathbb{Z}_{p^k})|^2}{|\operatorname{Im}(T_{ik})|}=\frac{p^{8k}}{p^{3k-3i}}=p^{5k+3i}.
\]
We also compute this in a another way. If $(U,V)\in \ker(T_{ik})$, then
\[
U = \begin{pmatrix} a & b \\
\beta p^{k-i} & d \end{pmatrix}, \qquad
V = \begin{pmatrix} a+\alpha p^{k-i} &  \gamma p^{k-i} \\
g & h \end{pmatrix},
\]
where $a,b,d,g,h\in\mathbb{Z}_{p^k}$ and $\alpha,\beta,\gamma\in \mathbb{Z}_{p^i}$. Hence there are altogether $(p^k)^5\cdot (p^i)^3=p^{5k+3i}$ possible choices. Next, we compute the cardinality of the set of invertible matrices
\[
(\mathrm{GL}_2(\mathbb{Z}_{p^k})\times \mathrm{GL}_2(\mathbb{Z}_{p^k}))\cap \ker(T_{ik}).
\]
As in the previous lemma, we obtain the additional conditions
\begin{align*}
    \det(U)&=ad-b\beta p^{k-i} \equiv ad \mod p,\\
    \det(V)&=(a+\alpha p^{k-i})h-g\gamma p^{k-i}\equiv ah \mod p.
\end{align*}
By the same argument as in the previous lemma, it follows that
\begin{itemize}
    \item $a,d,h\not\equiv 0 \mod p$, and each of these can be chosen in $p^{k-1}(p-1)$ ways,
    \item $\alpha,\beta,\gamma\in\mathbb{Z}_{p^i}$ can each be chosen in $p^i$ ways,
    \item $b,g\in\mathbb{Z}_{p^k}$ can each be chosen in $p^k$ ways.
\end{itemize}
Hence
\[
|(\mathrm{GL}_2(\mathbb{Z}_{p^k})\times \mathrm{GL}_2(\mathbb{Z}_{p^k}))\cap \ker(T_{ik})|
=\big(p^{k-1}(p-1)\big)^3 (p^i)^3 (p^k)^2
=p^{5k+3i-3}(p-1)^3.
\]
\end{proof}

\begin{lemma}\label{StabKoko3}
Let \(0 \le i < k\) and
\[
D_{i,i} = \begin{pmatrix} p^i & 0 \\ 0 & p^i \end{pmatrix}.
\]
Then
\[
|\operatorname{Stab}(D_{i,i})| =   p^{4k+4i-3}(p-1)(p^2-1).
\]
\end{lemma}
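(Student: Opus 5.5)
We follow the same scheme as in Lemmas \ref{StabKoko1} and \ref{StabKoko2}. Write
\[
U=\begin{pmatrix} a&b\\ c&d\end{pmatrix},\qquad V=\begin{pmatrix} e&f\\ g&h\end{pmatrix}.
\]
Since $D_{i,i}=p^iI$ is scalar, the stabilizer equation $UD_{i,i}=D_{i,i}V$ reads $p^iU=p^iV$, that is, $p^i(U-V)=0$. Entrywise this means $U\equiv V \pmod{p^{k-i}}$. Hence the kernel of the linear map $T_{ii}(U,V)=UD_{i,i}-D_{i,i}V$ consists of the pairs $(U,\,U+p^{k-i}W)$. Here $U\in M_2(\mathbb Z_{p^k})$ is arbitrary and $W$ has entries $\alpha,\beta,\gamma,\eta\in\mathbb Z_{p^i}$.

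As a check, the image of $T_{ii}$ is $M_2(\langle p^i\rangle)$, of size $p^{4(k-i)}$. This gives $|\ker T_{ii}|=p^{8k}/p^{4k-4i}=p^{4k+4i}$, which agrees with the direct parametrization $p^{4k}\cdot p^{4i}$.

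Next we impose invertibility. Because $i<k$, we have $k-i\ge 1$, so $V\equiv U\pmod p$ and therefore $\det V\equiv\det U\pmod p$. By Proposition \ref{YksRom} and Corollary \ref{nojakkaaa}, the pair $(U,V)$ lies in $\mathrm{GL}_2(\mathbb Z_{p^k})^2$ exactly when $U\in\mathrm{GL}_2(\mathbb Z_{p^k})$. Once that holds, $W$ is unrestricted.

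It follows that
\[
|\operatorname{Stab}(D_{i,i})|=|\mathrm{GL}_2(\mathbb Z_{p^k})|\cdot (p^i)^4 .
\]
By Lemma \ref{GLmaara}, this equals
\[
p^{4k-3}(p-1)(p^2-1)\,p^{4i}=p^{4k+4i-3}(p-1)(p^2-1),
\]
which is the claimed formula.

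The only delicate point is the one that differs from Lemmas \ref{StabKoko1} and \ref{StabKoko2}. There, the condition $\det U,\det V\not\equiv 0\pmod p$ reduced to a few diagonal entries being units. Here no entry of $U$ is forced into $\langle p\rangle$, so the invertibility condition cannot be split entrywise. Instead we use the whole count $|\mathrm{GL}_2(\mathbb Z_{p^k})|$, relying on the observation that $V$ is invertible automatically once $U$ is.
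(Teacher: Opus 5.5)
Your proof is correct and follows essentially the paper's argument. Both reduce $UD_{i,i}=D_{i,i}V$ to $V=U+Y$ with $Y\in M_2(\langle p^{k-i}\rangle)$, observe that $V$ is invertible exactly when $U$ is, and count $|\mathrm{GL}_2(\mathbb Z_{p^k})|\cdot p^{4i}$. The differences are cosmetic: you get $\det V\equiv\det U \pmod p$ directly from $V\equiv U\pmod p$ rather than from the identity $\det(U+Y)=\det U+\det Y+\operatorname{tr}(\operatorname{adj}(U)Y)$, and you add a kernel--image consistency check that the paper omits for this lemma.
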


\begin{proof}
Since \(D_{i,i}=p^i I_2\), the equation \(U D_{i,i} = D_{i,i} V\) is equivalent to
\[
p^i U = p^i V,
\]
that is,
\[
p^i (U - V) = 0.
\]
Since, in the ring \( \mathbb{Z}_{p^k}\), the equality \(p^i X = 0\) holds if and only if \(X \in M_2(\langle p^{k-i}\rangle )\), it follows that
\[
V = U + Y,
\qquad
Y \in M_2(\langle p^{k-i}\rangle).
\]
For \(2\times 2\) matrices over a commutative ring, the following determinant identity holds:
\[
\det(U+Y)
=
\det(U)
+
\det(Y)
+
\operatorname{tr}(\operatorname{adj}(U)Y).
\]
From this it follows that
\[
\det(U+Y)\equiv \det(U) \mod p,
\]
that is,
\[
\det(U+Y)\in U(\mathbb{Z}_{p^k}) \iff \det(U)\in U(\mathbb{Z}_{p^k}),
\]
and hence, in summary,
\[
U\in \mathrm{GL}_2(\mathbb{Z}_{p^k})\iff U+Y\in \mathrm{GL}_2(\mathbb{Z}_{p^k}).
\]
Define the mapping
\[
\Phi: \mathrm{GL}_2(\mathbb{Z}_{p^k}) \times M_2(\langle p^{k-i}\rangle)
\longrightarrow
\operatorname{Stab}(D_{i,i}),
\qquad
(U,Y) \longmapsto (U, U+Y).
\]
This mapping is a bijection:
\begin{itemize}
    \item Injectivity: If \(\Phi(U,Y) = \Phi(U',Y')\), then \(U = U'\) and \(U+Y = U'+Y'\), so \(Y = Y'\).
    \item Surjectivity: If \((U,V) \in \operatorname{Stab}(D_{i,i})\), then \(V = U + Y\) for some \(Y \in M_2(\langle p^{k-i}\rangle)\). Hence \(\Phi(U, V-U) = (U,V)\).
\end{itemize}
Therefore,
\[
|\operatorname{Stab}(D_{i,i})|
=
|\mathrm{GL}_2(\mathbb{Z}_{p^k})| \; |M_2(\langle p^{k-i}\rangle)|,
\]
where
\[
|M_2(\langle p^{k-i}\rangle)| = |\langle p^{k-i}\rangle|^4 = p^{4i},
\]
and, by Lemma \ref{GLmaara},
\[
|\mathrm{GL}_2(\mathbb{Z}_{p^k})| = p^{4k-3}(p-1)(p^2-1).
\]
Thus,
\[
|\operatorname{Stab}(D_{i,i})|=p^{4k+4i-3}(p-1)(p^2-1).
\]
\end{proof}

\begin{remark}
The case \(i = k\) is exceptional. In this case \(D_{k,k} = 0\), so every pair \((U,V) \in \mathrm{GL}_2(\mathbb{Z}_{p^k}) \times \mathrm{GL}_2(\mathbb{Z}_{p^k})\) satisfies the condition \(U D = D V\). Hence
\[
|\operatorname{Stab}(D_{k,k})| = |\mathrm{GL}_2(\mathbb{Z}_{p^k})|^2.
\]
\end{remark}

\subsection{Orbit Sizes}

The following formulas give the orbit sizes in the different cases.

\begin{proposition}\label{ORBKOK}
Let $p$ be an odd prime and $k\ge 1$.
Let $D_{i,j}$ be as above, with $0\le i\le j\le k$.
Then
\begin{align*}
|\mathcal O(D_{i,i})| &= p^{4k-4i-3}(p-1)^2(p+1), &&0\le i<k,\\
|\mathcal O(D_{i,j})| &= p^{4k-3i-j-4}(p-1)^2(p+1)^2, &&0\le i<j<k,\\
|\mathcal O(D_{i,k})| &= p^{3k-3i-3}(p-1)(p+1)^2, &&i<k,
\end{align*}
and
\[
|\mathcal O(D_{k,k})|=1.
\]
\end{proposition}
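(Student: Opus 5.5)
The plan is to apply the orbit--stabilizer formula
\[
|\mathcal O(D_{i,j})|=\frac{|\mathrm{GL}_2(\mathbb Z_{p^k})|^2}{|\operatorname{Stab}(D_{i,j})|}
\]
case by case. The numerator comes from Lemma \ref{GLmaara}:
\[
|\mathrm{GL}_2(\mathbb Z_{p^k})|^2=p^{8k-6}(p-1)^2(p^2-1)^2=p^{8k-6}(p-1)^4(p+1)^2 .
\]
The denominators come from Lemmas \ref{StabKoko1}, \ref{StabKoko2}, \ref{StabKoko3} and the Remark on $D_{k,k}$. The proof is then four divisions, using $p^2-1=(p-1)(p+1)$ to cancel factors.

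\emph{Case $i=j<k$.} Lemma \ref{StabKoko3} gives $|\operatorname{Stab}(D_{i,i})|=p^{4k+4i-3}(p-1)^2(p+1)$. The quotient has $p$-exponent $8k-6-4k-4i+3=4k-4i-3$, and what remains is $(p-1)^2(p+1)$, as claimed.

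\emph{Case $i<j<k$.} Lemma \ref{StabKoko1} gives $p^{4k+3i+j-2}(p-1)^2$. The exponent becomes $8k-6-(4k+3i+j-2)=4k-3i-j-4$, with cofactor $(p-1)^2(p+1)^2$.

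\emph{Case $j=k$, $i<k$.} Lemma \ref{StabKoko2} gives $p^{5k+3i-3}(p-1)^3$. The exponent becomes $8k-6-5k-3i+3=3k-3i-3$, with cofactor $(p-1)(p+1)^2$.

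\emph{Case $D_{k,k}$.} Here $D_{k,k}=0$, so the stabilizer is all of $\mathrm{GL}_2^2$ and the orbit is $\{0\}$, of size $1$. This can also be seen directly, since $U\cdot 0\cdot V^{-1}=0$.

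There is no real obstacle, since the work was done in the stabilizer lemmas. The only care needed is the exponent bookkeeping and checking that the exponents $4k-4i-3$, $4k-3i-j-4$ and $3k-3i-3$ are nonnegative in their ranges. In the second case $i<j\le k-1$ forces $3i+j\le 4k-4$. In the first and third cases $i\le k-1$ gives exponents at least $1$ and $0$ respectively.

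As a sanity check I would use \eqref{orbitositus} for $k=1$. The orbit sizes are $(p-1)^2(p+1)p$ for $D_{0,0}$, $(p+1)^2(p-1)$ for $D_{0,1}$, and $1$ for $D_{1,1}$. They sum to $(p^2-1)(p^2-p+p+1)+1=p^4$, consistent with the partition.
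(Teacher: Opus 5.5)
Your proposal is correct and takes the same approach as the paper: the orbit--stabilizer formula with $|\mathrm{GL}_2(\mathbb Z_{p^k})|^2=p^{8k-6}(p-1)^4(p+1)^2$ is divided by the stabilizer sizes from Lemmas \ref{StabKoko1}--\ref{StabKoko3}, and $D_{k,k}=0$ is observed to have orbit $\{0\}$. Your exponent bookkeeping and the $k=1$ partition check are both right; the paper instead checks the full orbit sum $p^{4k}$ in the lemma that follows.
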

\begin{proof}
By the orbit--stabilizer theorem,
\[
|\mathcal O(D_{i,j})|
=
\frac{|\mathrm{GL}_2(\mathbb Z_{p^k})|^2}
{|\operatorname{Stab}(D_{i,j})|},
\]
where
\[
|\mathrm{GL}_2(\mathbb Z_{p^k})|
=
p^{4k-3}(p-1)^2(p+1).
\]
If \(i=j<k\), then, by Lemma \ref{StabKoko3},
\(|\operatorname{Stab}(D_{i,i})|=p^{4k+4i-3}(p-1)(p^2-1)\), and hence
\[
|\mathcal O(D_{i,i})|
=
\frac{|\mathrm{GL}_2(\mathbb Z_{p^k})|^2}
{|\mathrm{GL}_2(\mathbb Z_{p^k})|\,p^{4i}}
=
p^{\,4k-3-4i}(p-1)^2(p+1).
\]
If \(i<j<k\), then, by Lemma \ref{StabKoko1},
\(|\operatorname{Stab}(D_{i,j})|=p^{\,4k+3i+j-2}(p-1)^2\), and therefore
\[
|\mathcal O(D_{i,j})|
=
\frac{\left(p^{4k-3}(p-1)^2(p+1)\right)^2}
{p^{\,4k+3i+j-2}(p-1)^2}
=
p^{\,4k-4-3i-j}(p-1)^2(p+1)^2.
\]
If \(i<k\) and \(j=k\), then, by Lemma \ref{StabKoko2},
\(|\operatorname{Stab}(D_{i,k})|=p^{\,5k+3i-3}(p-1)^3\). Hence
\[
|\mathcal O(D_{i,k})|
=
\frac{p^{\,8k-6}(p-1)^4(p+1)^2}
{p^{\,5k+3i-3}(p-1)^3}
=
p^{\,3k-3i-3}(p-1)(p+1)^2.
\]
Finally, since \(D_{k,k}=0\), its orbit consists only of the zero matrix. Therefore,
\[
|\mathcal O(D_{k,k})|=1.
\]
\end{proof}
Since, by (\ref{orbitositus}), the orbits form a partition of the matrix space $M_2(\mathbb{Z}_{p^k})$, the calculations in the previous proposition can be verified by summing the orbit sizes. The resulting sum must be equal to
\[
| M_2(\mathbb{Z}_{p^k})|=p^{4k}.
\]

\begin{lemma}[Sum of the Orbits]
\[
\sum_{i,j=0}^k |\mathcal O(D_{i,j})|=p^{4k}.
\]

\end{lemma}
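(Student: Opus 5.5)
The plan is to evaluate the sum by splitting it according to the four cases of Proposition \ref{ORBKOK}. By (\ref{orbitositus}) the sum runs over the pairs $0\le i\le j\le k$; since the pairs with $i>j$ do not index any Smith class, they are omitted. Write
\[
\sum_{0\le i\le j\le k}|\mathcal O(D_{i,j})| = S_1+S_2+S_3+1,
\]
where $S_1$ collects the classes $D_{i,i}$ with $0\le i<k$, $S_2$ the classes $D_{i,j}$ with $0\le i<j<k$, $S_3$ the classes $D_{i,k}$ with $0\le i<k$, and the final $1$ is $|\mathcal O(D_{k,k})|$. Each of $S_1,S_2,S_3$ is a geometric sum, or a double geometric sum, multiplied by a polynomial in $p$.

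The one-parameter sums are immediate. First,
\[
S_1=(p-1)^2(p+1)\sum_{i=0}^{k-1}p^{4k-4i-3}=(p-1)^2(p+1)\,p\,\frac{p^{4k}-1}{p^4-1}=\frac{p(p-1)(p^{4k}-1)}{p^2+1}.
\]
Second,
\[
S_3=(p-1)(p+1)^2\sum_{i=0}^{k-1}p^{3k-3i-3}=(p-1)(p+1)^2\,\frac{p^{3k}-1}{p^3-1}.
\]
For $S_2$ I substitute $j=i+s$ with $1\le s\le k-1-i$. Then the exponent becomes $4k-4-4i-s$. For fixed $i$, the inner sum over $s$ is the geometric sum $p^{4k-4-4i}\,(1-p^{-(k-1-i)})/(p-1)$. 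The outer sum over $i$ then splits into two geometric series, one with ratio $p^{-4}$ and one with ratio $p^{-3}$. Because of the prefactor $(p-1)^2(p+1)^2$, the resulting expression for $S_2$ will contain the same denominators $p^4-1$ and $p^3-1$ that appear in $S_1$ and $S_3$.

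The final step is to put everything over the common denominator $(p^4-1)(p^3-1)$ and verify that $S_1+S_2+S_3+1=p^{4k}$. I expect this algebraic simplification, in particular bookkeeping the double sum $S_2$ correctly, to be the main obstacle. To organize it, I would group the terms by the monomials $p^{4k}$, $p^{3k}$ and $1$. The coefficient of $p^{3k}$ should cancel between $S_2$ and $S_3$, and the coefficient of $p^{4k}$ from $S_1+S_2$ should reduce to $1$. As sanity checks I would use small cases. For $k=1$ we have $S_2=0$, and
\[
(p^2-1)\bigl(p(p-1)+(p+1)\bigr)+1=(p^2-1)(p^2+1)+1=p^4,
\]
as required. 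The case $k=2$ gives a further check with $S_2\neq 0$.

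If the direct simplification becomes unwieldy, the fallback is induction on $k$. Passing from $k$ to $k+1$, the terms of $S_1$ and $S_2$ with $i\ge 1$ are obtained from the level-$k$ terms by the index shift $(i,j)\mapsto(i+1,j+1)$, which leaves their exponents unchanged. Only the new boundary terms with $i=0$ in $S_1$ and $S_2$, together with the change in $S_3$, then need to be summed. One would check that these new contributions account exactly for the increase from $p^{4k}$ to $p^{4k+4}$.
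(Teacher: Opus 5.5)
Your proposal is correct and follows the same route as the paper: split the sum over $0\le i\le j\le k$ into the four cases of Proposition~\ref{ORBKOK}, evaluate the resulting geometric sums, and simplify. Your diagonal substitution $j=i+s$ makes the final step shorter than the paper's computation. It gives $S_2=(p-1)(p+1)^2\bigl(\frac{p^{4k}-1}{p^4-1}-\frac{p^{3k}-1}{p^3-1}\bigr)$, so $S_2+S_3=\frac{(p+1)(p^{4k}-1)}{p^2+1}$ and $S_1+S_2+S_3=p^{4k}-1$, which is exactly the cancellation you anticipated.
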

\begin{proof}
\begin{align*}
    \sum_{i,j=0}^k |\mathcal O(D_{i,j})|=&\sum_{0\le i<j<k} |\mathcal O(D_{i,j})|+\sum_{i=0}^{k-1}|\mathcal O(D_{i,i})|\\
    &+\sum_{i=0}^{k-1}|\mathcal O(D_{i,k})|+|\mathcal O(D_{k,k})|\\
    =&\sum_{0\le i<j<k} p^{4k-3i-j-4}(p-1)^2(p+1)^2+\sum_{i=0}^{k-1}p^{4k-4i-3}(p-1)^2(p+1)\\
    &+\sum_{i=0}^{k-1}p^{3k-3i-3}(p-1)(p+1)^2+1\\
    =&p^{4k-4}(p-1)^2(p+1)^2\sum_{0\le i<j<k}p^{-3i-j} +p^{4k-3}(p-1)^2(p+1)\sum_{i=0}^{k-1}p^{-4i}\\
    &+p^{3k-3}(p-1)(p+1)^2\sum_{i=0}^{k-1}p^{-3i}+1.
\end{align*}
The geometric sums are computed as follows:
\begin{align*}
    \sum_{i=0}^{k-1}p^{-4i}&= \frac{p^4 - p^{4-4k}}{p^4 - 1},\\
    \sum_{i=0}^{k-1}p^{-3i}&=\frac{p^3 - p^{3-3k}}{p^3 - 1},
\end{align*}
and
\[
\sum_{0\le i<j<k}p^{-3i-j}=\frac{p^3}{(p-1)(p^2+p+1)}\left(\frac{p^{k-1}-1}{p^{k-1}(p-1)}-\frac{p^{4k-4}-1}{p^{4k-4}(p^4-1)}\right)
\]
It is obtained that
\begin{align*}
    \sum_{i,j=0}^k |\mathcal O(D_{i,j})| 
    =&p^{4k-4}(p-1)^2(p+1)^2\frac{p^3}{(p-1)(p^2+p+1)}\left(\frac{p^{k-1}-1}{p^{k-1}(p-1)}-\frac{p^{4k-4}-1}{p^{4k-4}(p^4-1)}\right)\\
    &+p^{4k-3}(p-1)^2(p+1)\frac{p^4 - p^{4-4k}}{p^4 - 1}
    +p^{3k-3}(p-1)(p+1)^2\frac{p^3 - p^{3-3k}}{p^3 - 1}+1\\
    =& (p+1)^2\frac{p^3}{(p^2+p+1)}p^{3k-3}(p^{k-1}-1) -(p-1)(p+1)^2\frac{p^3}{(p^2+p+1)}\frac{p^{4k-4}-1}{p^4-1}\\
    &+p^{4k-3}(p-1)^2(p+1)\frac{p^{4k} - 1}{p^{4k-4}(p^4 - 1)}
    +p^{3k-3}(p-1)(p+1)^2\frac{p^{3k} - 1}{p^{3k-3}(p^3 - 1)}+1\\
    &=p^{4k}S,
\end{align*}
where
\begin{align*}
    S=& (p+1)^2\frac{p^3}{(p^2+p+1)}p^{-k-3}(p^{k-1}-1) -(p-1)(p+1)^2\frac{p^{3-4k}}{(p^2+p+1)}\frac{p^{4k-4}-1}{p^4-1}\\
    &+p^{-3}(p-1)^2(p+1)\frac{p^{4k} - 1}{p^{4k-4}(p^4 - 1)}
    +p^{-k-3}(p-1)(p+1)^2\frac{p^{3k} - 1}{p^{3k-3}(p^3 - 1)}+p^{-4k}
\end{align*}
Since $p^3-1=(p-1)(p^2+p+1)$, the 1st and 4th terms are obtained in the form
\[
T_{14}=\frac{(p+1)^2}{p^2+p+1}\left(p^{-1}-p^{-4k}\right).
\]
Similarly, the 2nd and 3rd terms are obtained in the form
\begin{align*}
 T_{23}=&-(p-1)(p+1)^2\frac{p^{3-4k}}{(p^2+p+1)}\frac{p^{4k-4}(p^{4k-4}-1)}{p^{4k-4}(p^4-1)}\\
 &\qquad +p^{-3}(p-1)^2(p+1)\frac{(p^{4k} - 1)(p^2+p+1)}{p^{4k-4}(p^4 - 1)(p^2+p+1)}\\
=&\frac{-(p-1)(p+1)^2p^{-1}(p^{4k-4}-1)+p^{-3}(p-1)^2(p+1)(p^{4k} - 1)(p^2+p+1)}{p^{4k-4}(p^4 - 1)(p^2+p+1)}\\
=&\frac{(p-1)(p+1)}{p^3}\frac{-(p+1)p^{2}(p^{4k-4}-1)+ (p-1)(p^{4k} - 1)(p^2+p+1)}{p^{4k-4}(p^4 - 1)(p^2+p+1)}.
\end{align*}
Since $p^4-1=(p-1)(p+1)(p^2+1)$, it is obtained that
\begin{align*}
 T_{23}
=&\frac{(p-1)(p+1)}{p^3}\frac{-(p+1)p^{2}(p^{4k-4}-1)+  (p^{4k} - 1)(p^3-1) }{p^{4k-4}(p^4 - 1)(p^2+p+1)}\\
=&\frac{p-1}{p}+\frac{1}{p^{4k-1}(p^2+p+1)}-\frac1{p^2+p+1}.
\end{align*}
Thus,
\begin{align*}
    S&=\frac{(p+1)^2}{p^2+p+1}\left(p^{-1}-p^{-4k}\right)+\frac{p-1}{p}+\frac{1}{p^{4k-1}(p^2+p+1)}-\frac1{p^2+p+1}+p^{-4k}\\
    &=\frac{1}{p}+\frac{p-1}{p}-\frac{1}{p^{4k}}+p^{-4k}=1.
\end{align*}
\end{proof}

```latex
\subsection{Uniform Distribution Lemma}

Let $m\in\mathbb Z_{p^k}$ and let $m=p^t u$, where $0<t<k$ and 
$u\in U(\mathbb Z_{p^k})$. 
The determinant fibers
\[
a_{p^k}(p^t u)
=
\left|
\{A\in M_2(\mathbb Z_{p^k}) : \det(A)=p^t u\}
\right|
\]
are considered.
It is observed that the map
\[
U(\mathbb Z_{p^k})\longrightarrow \{p^t u : u\in U(\mathbb Z_{p^k})\},
\qquad
u\mapsto p^t u,
\]
is not injective. For example, if $p=3$, $k=2$ and $t=1$, then
\[
U(\mathbb{Z}_9)=\{1,2,4,5,7,8\},
\]
that is, the elements which are not divisible by $3$.
 In this case,
\[
\begin{array}{c|c}
u & 3u\mod9\\
\hline
1 & 3\\
2 & 6\\
4 & 3\\
5 & 6\\
7 & 3\\
8 & 6
\end{array}
\]
Thus, the images of $u\mapsto 3u$ are only $\{3,6\}$, so these units produce only two different determinants. In general, it is observed that
\[
p^t u_1 = p^t u_2
\iff
u_1 \equiv u_2 \pmod{p^{k-t}},
\]
which shows that different determinant values correspond exactly to residue classes modulo $p^{k-t}$; hence there are
\[
|U(\mathbb Z_{p^{k-t}})|=p^{k-t-1}(p-1)
\]
distinct values.\\

In the previous example, $1\equiv 4\equiv 7\mod 3$ and $2\equiv 5\equiv 8 \mod 3$, and precisely these elements give the same determinant. Thus,
\[
U(\mathbb{Z}_3)=\{ \widetilde{1},\widetilde{2}\}.
\]
The class $\widetilde{1}$ can be lifted modulo $9$ in three ways: $1,4,7$, and the class $\widetilde{2}$ can be lifted as $2,5,8$. Each class has $p=3$ lifts. The choice of the lift does not matter – all representatives of the same residue class give the same determinant. The same property holds in general.

\begin{lemma}\label{nostolemma}
Each unit residue class modulo $p^{k-t}$ has exactly $p^t$ lifts 
in the unit group $U(\mathbb Z_{p^k})$.    
\end{lemma}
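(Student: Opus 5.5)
The plan is to treat the reduction map
\[
\pi:U(\mathbb Z_{p^k})\longrightarrow U(\mathbb Z_{p^{k-t}}),\qquad u\longmapsto u \bmod p^{k-t},
\]
as a surjective group homomorphism. The number of lifts of each class is then the order of $\ker\pi$, and that order can be read off from the cardinalities in Proposition \ref{YksRom}(b).

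First, $\pi$ is well defined and lands in the units. Reduction modulo $p^{k-t}$ is a ring homomorphism $\mathbb Z_{p^k}\to\mathbb Z_{p^{k-t}}$, because $p^{k-t}\mid p^k$. Ring homomorphisms send units to units, and they are multiplicative, so $\pi$ is a group homomorphism.

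Next, surjectivity. Take a class $\widetilde v\in U(\mathbb Z_{p^{k-t}})$ and pick its representative $v\in\{0,\dots,p^{k-t}-1\}$. By Proposition \ref{YksRom}(a) applied in $\mathbb Z_{p^{k-t}}$, $p\nmid v$. The same integer $v$, viewed in $\mathbb Z_{p^k}$, is therefore again a unit by Proposition \ref{YksRom}(a), and it reduces to $\widetilde v$.

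Then the count. More concretely, the lifts of $\widetilde v$ in $\mathbb Z_{p^k}$ are exactly $v+sp^{k-t}$ for $s=0,\dots,p^t-1$, which gives $p^t$ elements. Each of them is $\equiv v\not\equiv 0\pmod p$, since $k-t\ge 1$, so each is a unit by Corollary \ref{nojakkaaa}. Hence every fibre of $\pi$ has exactly $p^t$ elements.

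As a consistency check via the First Isomorphism Theorem,
\[
|\ker\pi|=\frac{p^{k-1}(p-1)}{p^{k-t-1}(p-1)}=p^t,
\]
and every fibre is a coset of $\ker\pi$.

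There is no real obstacle here. The only point needing care is the hypothesis $t<k$: it guarantees $k-t\ge1$, so that a lift of a unit modulo $p^{k-t}$ is still a unit modulo $p$.
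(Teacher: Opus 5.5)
Your proof is correct and essentially the paper's: the paper uses the same reduction homomorphism $\pi:U(\mathbb Z_{p^k})\to U(\mathbb Z_{p^{k-t}})$, counts $\ker\pi=1+\langle p^{k-t}\rangle$ by writing its elements as $1+p^{k-t}m$ with $m$ determined modulo $p^t$, and transfers this count to every fibre via the cosets $a\cdot\ker\pi$. The differences are minor: you parametrize each fibre directly as $v+sp^{k-t}$, you prove surjectivity (the paper only asserts it), and you add the index check $|U(\mathbb Z_{p^k})|/|U(\mathbb Z_{p^{k-t}})|=p^t$.
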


\begin{proof}
The natural reduction map
\[
\pi: U(\mathbb Z_{p^k}) \longrightarrow U(\mathbb Z_{p^{k-t}}),
\qquad
\pi(u)= u \bmod p^{k-t}
\]
is considered.
The map is well-defined and surjective. Its kernel is
\[
\ker(\pi) = \{ u \in U(\mathbb Z_{p^k}) : u \equiv 1 \mod{p^{k-t}} \}
= 1 + \langle p^{k-t}\rangle.
\]
By the first isomorphism theorem, the group isomorphism
\[
U(\mathbb Z_{p^k})/(1 + \langle p^{k-t}\rangle) \cong U(\mathbb Z_{p^{k-t}})
\]
is obtained.
In particular, each fiber $\pi^{-1}(a)=a\cdot \ker(\pi)$, where $a\in U(\mathbb Z_{p^{k-t}})$, is isomorphic to the kernel $\ker(\pi)$, and therefore its cardinality is
\[
|\pi^{-1}(a)| = |\ker(\pi)|.
\]
It is therefore sufficient to calculate the cardinality of the kernel. The kernel consists of those units $u\in U(\mathbb Z_{p^k})$ for which $u = 1 + p^{k-t}m$ for some $m\in \mathbb Z_{p^k}$. Such an element is a unit, because
\[
u \equiv 1 \pmod p,
\]
and hence $\gcd(u,p)=1$. Two different values $m_1$ and $m_2$ give the same element modulo $p^k$ precisely when
\[
p^{k-t}(m_1-m_2) \equiv 0 \pmod{p^k},
\]
which is equivalent to the condition $p^t \mid (m_1-m_2)$. Thus, $m$ is uniquely determined modulo $p^t$. Since $\mathbb Z_{p^t}$ contains exactly $p^t$ elements, it is obtained that
\[
|\ker(\pi)| = p^t.
\]
Consequently, each fiber $\pi^{-1}(a)$ contains exactly $p^t$ elements, which proves the claim.
\end{proof}
The following lemma is a central tool in the computation of determinant fibers. It shows that within each Smith class, the unit part of the determinant is uniformly distributed.

\begin{lemma}[Uniform Distribution in a Smith Class]
\label{lem:tasajakautuminen}
For each unit residue class \(\widetilde{u}\in U(\mathbb Z_{p^{k-t}})\), that is, for \(u\in U(\mathbb Z_{p^k})\) modulo \(p^{k-t}\), define
\[
\mathcal O_{\widetilde{u}}(D_{i,j})
=
\{
A\in\mathcal O(D_{i,j})
:
\det(A)=p^t u
\},
\]
where $u$ is an arbitrary lift of $\widetilde{u}$ to the ring \(\mathbb Z_{p^k}\).
This definition is well-defined, because \(p^t u\) depends only on the residue class of $u$ modulo \(p^{k-t}\).\\
\\
The sets \(\mathcal O_{\widetilde{u}}(D_{i,j})\) form a partition of the orbit
\(\mathcal O(D_{i,j})\):
\[
\mathcal O(D_{i,j})=\bigsqcup_{\widetilde{u}\in U(\mathbb Z_{p^{k-t}})}
\mathcal O_{\widetilde{u}}(D_{i,j}),
\]
and for all \(\widetilde{u},\widetilde{v}\in U(\mathbb Z_{p^{k-t}})\) it holds that
\[
|\mathcal O_{\widetilde{u}}(D_{i,j})|
=
|\mathcal O_{\widetilde{v}}(D_{i,j})|.
\]
In particular,
\[
|\mathcal O_{\widetilde{u}}(D_{i,j})|
=
\frac{|\mathcal O(D_{i,j})|}
{|U(\mathbb Z_{p^{k-t}})|},
\]
which is the same for all \(\widetilde{u}\in U(\mathbb Z_{p^{k-t}})\).
\end{lemma}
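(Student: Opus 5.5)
Throughout, $t=i+j<k$ is the valuation attached to the Smith class $\mathcal O(D_{i,j})$. The plan is to reuse the multiplication trick from Lemma \ref{Tasakuitu} and Theorem \ref{TasJakLause}, but inside a single Smith class. The key observation is that left multiplication by an invertible matrix preserves each orbit $\mathcal O(D_{i,j})$. If $A=UD_{i,j}V^{-1}$ and $W\in\mathrm{GL}_2(\mathbb Z_{p^k})$, then $WA=(WU)D_{i,j}V^{-1}\in\mathcal O(D_{i,j})$, and $A\mapsto W^{-1}A$ inverts this map. Hence $A\mapsto WA$ is a bijection of $\mathcal O(D_{i,j})$ onto itself.

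First I would establish the partition statement. By the computation preceding the classification of orbits, every $A\in\mathcal O(D_{i,j})$ has $\det(A)=p^{t}u$ for some $u\in U(\mathbb Z_{p^k})$, namely $u=\det(U)\det(V)^{-1}$. The value $p^tu$ depends only on $u \bmod p^{k-t}$, because $p^tu_1=p^tu_2$ if and only if $u_1\equiv u_2\pmod{p^{k-t}}$. So each $A$ determines a unique class $\widetilde u\in U(\mathbb Z_{p^{k-t}})$. Distinct classes give distinct determinants, so the sets $\mathcal O_{\widetilde u}(D_{i,j})$ are pairwise disjoint and their union is the whole orbit. Lemma \ref{nostolemma} guarantees that every $\widetilde u$ has a lift, so every index set is meaningful; some pieces might a priori be empty, but the next step rules that out.

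Second, I would prove equicardinality. Given $\widetilde u,\widetilde v$, choose lifts $u,v$ and set $w=vu^{-1}\in U(\mathbb Z_{p^k})$ and $D_w=\mathrm{diag}(w,1)$. The map $A\mapsto D_wA$ sends $\mathcal O(D_{i,j})$ into itself by the observation above. It multiplies determinants by $w$, so $\det(A)=p^tu$ is sent to $p^twu=p^tv$. Therefore it maps $\mathcal O_{\widetilde u}(D_{i,j})$ injectively into $\mathcal O_{\widetilde v}(D_{i,j})$, and $D_{w^{-1}}$ gives the inverse map. Thus all pieces have equal size. Since the orbit is nonempty (it contains $D_{i,j}$), every piece is nonempty as well.

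Finally, counting the pieces, there are $|U(\mathbb Z_{p^{k-t}})|$ of them, which gives $|\mathcal O_{\widetilde u}(D_{i,j})|=|\mathcal O(D_{i,j})|/|U(\mathbb Z_{p^{k-t}})|$. The only delicate point I expect is the well-definedness bookkeeping: showing that the determinant's unit part modulo $p^{k-t}$ is intrinsic to $A$ even though $u$ itself is not unique, which follows from the equivalence above. Everything else is the same $D_w$-translation argument already used in Theorem \ref{TasJakLause}.
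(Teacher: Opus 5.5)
Your proof is correct and takes the same approach as the paper. The paper also maps $\mathcal O_{\widetilde u}(D_{i,j})$ onto $\mathcal O_{\widetilde v}(D_{i,j})$ by left multiplication with some $T\in\mathrm{GL}_2(\mathbb Z_{p^k})$ of determinant $vu^{-1}$; your $D_w=\mathrm{diag}(w,1)$ is one explicit such $T$. It likewise gets the partition from $\det(UD_{i,j}V^{-1})=p^t\det(U)\det(V)^{-1}$ together with $p^tu_1=p^tu_2\iff u_1\equiv u_2\pmod{p^{k-t}}$.
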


\begin{proof}
Let \(\widetilde{u},\widetilde{v}\in U(\mathbb Z_{p^{k-t}})\). Choose lifts \(u,v\in U(\mathbb Z_{p^k})\). Since the determinant map
\(\det:\mathrm{GL}_2(\mathbb Z_{p^k})\to U(\mathbb Z_{p^k})\) is surjective, a matrix
\(T\in\mathrm{GL}_2(\mathbb Z_{p^k})\) can be chosen such that \(\det(T)=v u^{-1}\).
Define the map
\[
\Phi_T:\mathcal O(D_{i,j})\to \mathcal O(D_{i,j}),\qquad
A\mapsto TA.
\]
Since left multiplication by an invertible matrix is a bijection and
\(TA=(TU)D_{i,j}V^{-1}\), the map \(\Phi_T\) is a bijection of the orbit. Moreover, if \(A\in \mathcal O_{\bar{u}}(D_{i,j})\), that is, \(\det(A)=p^t u\), then
\[
\det(\Phi_T(A))=\det(TA)=\det(T)\det(A)=v u^{-1}\, p^t u = p^t v.
\]
Thus \(A\in\mathcal O_{\widetilde{u}}(D_{i,j})\) if and only if
\(\Phi_T(A)\in\mathcal O_{\widetilde{v}}(D_{i,j})\). Since the above construction can be performed for every pair of lifts $(u,v)$, the map \(\Phi_T\) induces a bijection
\(\mathcal O_{\widetilde{u}}(D_{i,j})\to\mathcal O_{\widetilde{v}}(D_{i,j})\). Consequently,
\[
|\mathcal O_{\widetilde{u}}(D_{i,j})|=|\mathcal O_{\widetilde{v}}(D_{i,j})|
\]
for all \(\widetilde{u},\widetilde{v}\in U(\mathbb Z_{p^{k-t}})\).\\
\\
For every \(A=UD_{i,j}V^{-1}\in\mathcal O(D_{i,j})\), one has
\[
\det(A)=p^{i+j}\det(U)\det(V)^{-1}=p^t\cdot \frac{\det(U)}{\det(V)}.
\]
Since \(\det(U),\det(V)\in U(\mathbb Z_{p^k})\), it follows that
\(\det(A)=p^t u\) for some
\(u\in U(\mathbb Z_{p^k})\). However, the map \(u\mapsto p^t u\) is not injective; its value depends only on the residue class of \(u\) modulo \(p^{k-t}\). Thus, each determinant value corresponds exactly to one residue class
\(\widetilde{u}\in U(\mathbb Z_{p^{k-t}})\). Therefore,
\[
\mathcal O(D_{i,j})=\bigsqcup_{\widetilde{u}\in U(\mathbb Z_{p^{k-t}})}
\mathcal O_{\widetilde{u}}(D_{i,j})
\]
is a disjoint union.
Since all sets \(\mathcal O_{\widetilde{u}}(D_{i,j})\) have the same cardinality and there are
\(|U(\mathbb Z_{p^{k-t}})|\) such sets, one obtains
\[
|\mathcal O(D_{i,j})|
=
|U(\mathbb Z_{p^{k-t}})|\,
|\mathcal O_{\widetilde{u}}(D_{i,j})|.
\]
This yields the desired formula for the sizes of the fibers.
\end{proof}

\subsection{Fiber Sum Formulas}

Let \(t=v_{p}(m)\) and \(m=p^tu\). Then, according to the previous section, there are exactly
\[
 |U(\mathbb Z_{p^{k-t}})|=p^{k-t-1}(p-1)
\]
distinct determinant values. In other words, the value \(a_{p^k}(p^t u)\) depends only on the residue class of
\(u\) modulo \(p^{k-t}\); 
correspondingly, the indexing can be performed directly by
\(\widetilde{u}\in U(\mathbb{Z}_{p^{k-t}})\), where \(u\in U(\mathbb Z_{p^k})\) 
is an arbitrary lift of this class to the ring \(\mathbb Z_{p^k}\).\\
\\
Lemma \ref{lem:tasajakautuminen} guarantees that in each Smith class
\(\mathcal O(D_{i,j})\) there are exactly
\[
|\mathcal O_{\widetilde{u}}(D_{i,j})|
=
\frac{|\mathcal O(D_{i,j})|}{|U(\mathbb Z_{p^{k-t}})|}
\]
matrices whose determinant is \(p^t u\), where \(u\) is any lift of the class \(\widetilde{u}\). 
Since the different Smith classes are disjoint, one obtains
\[
a_{p^k}(p^t u)
=\sum_{\substack{0\le i\le j<k\\ i+j=t}}
|\mathcal O_{\widetilde{u}}(D_{i,j})|
=\sum_{\substack{0\le i\le j<k\\ i+j=t}}
\frac{|\mathcal O(D_{i,j})|}{|U(\mathbb Z_{p^{k-t}})|}.
\]
The invariance of the fiber can be seen from the previous expression. The element \(u\) can be any lift of \(\widetilde{u}\), but the latter sums remain invariant. Next, define
\[
B_t = \sum_{\substack{0\le i\le j<k\\ i+j=t}} |\mathcal O(D_{i,j})|,
\qquad 0\le t<k.
\]
Then the final sum formula is obtained as
\begin{align}\label{KuitSYmKaav}
a_{p^k}(p^t u)
=
\frac{B_t}{|U(\mathbb Z_{p^{k-t}})|}
=
\frac{B_t}{p^{k-t-1}(p-1)}.
\end{align}
The complement formula for the zero fiber, Lemma \ref{NollakuidunKkaava}, gives
\[
a_{p^k}(0)
=
p^{4k}
-
\sum_{t=0}^{k-1}
p^{k-t-1}(p-1)\, A_t,
\]
where $A_t = a_{p^k}(m)$, and $v_p(m)=t$. Thus, in the case \(t=k\), i.e. \(m=0\), formula (\ref{KuitSYmKaav}) gives
\[
a_{p^k}(0)
=
p^{4k}
-
\sum_{t=0}^{k-1}B_t.
\]
In Proposition \ref{Uksikkokuidutmaara}, it has been proved that
\[
a_{p^k}(m)=p^{3k-2}(p^2-1).
\]
in the case \(t=0\), i.e. when \(m\in U({\mathbb{Z}_{p^k}})\). In summary, a sum representation for the sizes of the fibers is obtained in terms of the sizes of the orbits:

\begin{lemma} \label{alustavalemma}
Let \(p\) be an odd prime, \(k\ge 1\), and \(m\in \mathbb Z_{p^k}\).
Denote \(t=v_p(m)\) (where \(v_p(0)=k\)). Then the fiber size
\[
a_{p^k}(m)=\left|\{A\in M_2(\mathbb Z_{p^k}):\det(A)=m\}\right|
\]
is
\[
a_{p^k}(m)=
\begin{cases}
p^{3k-2}(p^2-1), & t=0,\\[6pt]
\dfrac{B_t}{p^{k-t-1}(p-1)}, & 0<t<k,\\[8pt]
p^{4k}-\displaystyle\sum_{t=0}^{k-1} B_t, & t=k,
\end{cases}
\]
where \(B_t\) (for \(0\le t<k\)) is defined by
\[
B_t = \sum_{\substack{0\le i\le j\\ i+j=t}} |\mathcal O(D_{i,j})|,
\]
and the orbit sizes are
\[
|\mathcal O(D_{i,j})|=
\begin{cases}
p^{4k-3-4i}(p-1)^2(p+1), & i=j<k,\\[4pt]
p^{4k-4-3i-j}(p-1)^2(p+1)^2, & i<j<k,\\[4pt]
p^{3k-3i-3}(p-1)(p+1)^2, & i<k,\ j=k.
\end{cases}
\]
\end{lemma}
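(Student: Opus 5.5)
This lemma gathers results already established, so the plan is to assemble the three cases from earlier statements rather than compute anything new. The orbit sizes are quoted directly from Proposition \ref{ORBKOK}. The case $t=0$ is Proposition \ref{Uksikkokuidutmaara}. The remaining work is to justify the two fiber formulas for $0<t<k$ and for $t=k$ using the Smith class decomposition (\ref{orbitositus}).

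For $0<t<k$, fix $m=p^tu$ with $u\in U(\mathbb Z_{p^k})$. By the valuation computation in the Smith normal form subsection, a matrix $A\in\mathcal O(D_{i,j})$ has $\det(A)=p^{i+j}w$ with $w$ a unit when $i+j<k$, and $\det(A)=0$ when $i+j\ge k$. By uniqueness of $p$-adic valuation, only orbits with $i+j=t$ can contribute to $\det^{-1}(m)$. Since $t<k$ forces $j\le t<k$, these orbits are all of the types $i=j<k$ or $i<j<k$, so the index restriction $j<k$ in $B_t$ is automatic.

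Within each such orbit, Lemma \ref{lem:tasajakautuminen} says the matrices with determinant exactly $p^tu$ number $|\mathcal O(D_{i,j})|/|U(\mathbb Z_{p^{k-t}})|$. Summing over the disjoint orbits with $i+j=t$ gives $B_t/|U(\mathbb Z_{p^{k-t}})|$. Finally, $|U(\mathbb Z_{p^{k-t}})|=p^{k-t-1}(p-1)$ by Proposition \ref{YksRom}(b) applied with $k-t$ in place of $k$. This gives formula (\ref{KuitSYmKaav}).

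For $t=k$, I would invoke Lemma \ref{NollakuidunKkaava} and substitute $A_t=B_t/(p^{k-t-1}(p-1))$ for each $0\le t<k$, so each term $p^{k-t-1}(p-1)A_t$ collapses to $B_t$. The case $t=0$ needs a moment's care, since there the expression should come from Proposition \ref{Uksikkokuidutmaara} rather than the uniform distribution lemma. I would check consistency: $B_0=|\mathcal O(D_{0,0})|=p^{4k-3}(p-1)^2(p+1)$, and dividing by $p^{k-1}(p-1)$ gives $p^{3k-2}(p^2-1)$, in agreement. Alternatively, the uniform distribution argument applies verbatim at $t=0$, since then $p^0u=u$ and the residue classes are all of $U(\mathbb Z_{p^k})$.

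Another route to the zero fiber avoids Lemma \ref{NollakuidunKkaava}. Write $\det^{-1}(0)$ as the union of orbits with $i+j\ge k$, and use the orbit-sum lemma, which says all orbit sizes total $p^{4k}$. The orbits with $i+j<k$ contribute exactly $\sum_{t<k}B_t$, so the zero fiber is $p^{4k}-\sum_{t<k}B_t$.

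The only delicate point is the uniform distribution step, specifically that every determinant value $p^tu$ arises and that $\det^{-1}(p^tu)$ meets only orbits with $i+j=t$. Both follow from uniqueness of the Smith normal form and of the valuation, so I expect no real obstacle. Everything else is direct substitution.
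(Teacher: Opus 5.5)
Your proposal is correct and follows the paper's own assembly: the orbit sizes come from Proposition~\ref{ORBKOK}, the case $t=0$ from Proposition~\ref{Uksikkokuidutmaara}, and the case $0<t<k$ from the Smith-class decomposition combined with Lemma~\ref{lem:tasajakautuminen}; the zero fiber comes from Lemma~\ref{NollakuidunKkaava} after substituting $p^{k-t-1}(p-1)A_t=B_t$, with the same $t=0$ consistency check $B_0=|\mathcal O(D_{0,0})|=|\mathrm{GL}_2(\mathbb Z_{p^k})|$ that the paper makes. Your alternative zero-fiber route is also valid, and it needs only the partition (\ref{orbitositus}), not the explicit orbit-sum computation.
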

Let \(0<t<k\) and define
\[
\delta(t)=\begin{cases}
1, & \text{when }\ 2|t,\\
0, & \text{otherwise}.
\end{cases}
\]
Then
\begin{align}
    a_{p^k}(m)&=\dfrac{B_t}{p^{k-t-1}(p-1)}\nonumber\\
    &=\dfrac{1}{p^{k-t-1}(p-1)}\sum_{\substack{0\le i\le j\\ i+j=t}} |\mathcal O(D_{i,j})|\nonumber\\
    &=\dfrac{1}{p^{k-t-1}(p-1)}\Big(\sum_{\substack{0\le i< j\\ i+j=t}} |\mathcal O(D_{i,j})|+ \delta(t)|\mathcal O(D_{t/2,t/2})|\Big)\label{SYYmmaa}
\end{align}
The following lemma is proved next:

\begin{lemma}
Let \(p>1\) and let \(t\ge 0\) be an integer. Then

\[
\sum_{\substack{0\le i< j\\ i+j=t}} p^{-3i-j}
=
p^{-t}\,
\frac{1-p^{-2\lfloor (t+1)/2 \rfloor}}{1-p^{-2}}.
\]
\end{lemma}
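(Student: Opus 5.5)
The plan is a direct computation: parametrize the index set by $i$ alone and reduce the sum to a finite geometric series. The constraints $0\le i<j$ and $i+j=t$ mean $j=t-i$ with $i<t-i$, that is, $2i<t$, or $0\le i\le \lceil t/2\rceil-1$. I will write the number of admissible $i$ as $n=\lfloor (t+1)/2\rfloor$. To justify this, split by parity. If $t=2s$, the condition $2i<2s$ gives $i=0,\dots,s-1$, so there are $s=\lfloor (2s+1)/2\rfloor$ terms. If $t=2s+1$, the condition $2i<2s+1$ gives $i=0,\dots,s$, so there are $s+1=\lfloor (2s+2)/2\rfloor$ terms. For $t=0$ the index set is empty and $n=0$, so both sides vanish, consistent with the formula.

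Next I rewrite each summand. Substituting $j=t-i$ gives $p^{-3i-j}=p^{-3i-(t-i)}=p^{-t}p^{-2i}$. The sum therefore becomes
\[
p^{-t}\sum_{i=0}^{n-1}(p^{-2})^{i}.
\]
Since $p>1$, the ratio $p^{-2}$ is not equal to $1$, so the finite geometric sum formula gives $\sum_{i=0}^{n-1}(p^{-2})^i=\frac{1-p^{-2n}}{1-p^{-2}}$. With $n=\lfloor (t+1)/2\rfloor$, this is exactly the claimed expression.

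The only step needing care is the count of admissible indices, namely checking that $\lfloor (t+1)/2\rfloor$ is correct in both parity cases, including the edge cases $t=0$ and $t=1$. The parity split above handles this. Everything else is routine.
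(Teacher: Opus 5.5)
Your proof is correct and follows the paper's argument: substitute $j=t-i$, restrict to $2i<t$, and sum the geometric series $p^{-t}\sum_{i=0}^{n-1}p^{-2i}$ with $n=\lfloor (t+1)/2\rfloor$ terms. Your parity split, including the empty case $t=0$, is a more explicit check of the paper's identity $\lfloor (t-1)/2\rfloor+1=\lfloor (t+1)/2\rfloor$.
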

\begin{proof}

Since \(i<j\) and \(i+j=t\), it follows that \(i < t/2\). Therefore,
\[
i=0,1,\dots, \left\lfloor \frac{t-1}{2} \right\rfloor.
\]
Consequently,
\[
\sum_{\substack{0\le i< j\\ i+j=t}} p^{-3i-j}
=
\sum_{i=0}^{\lfloor (t-1)/2 \rfloor} p^{-3i-(t-i)}
= p^{-t} \sum_{i=0}^{\lfloor (t-1)/2 \rfloor} (p^{-2})^i.
\]
This is a geometric series, and since
\[
\left\lfloor \frac{t-1}{2} \right\rfloor + 1
=
\left\lfloor \frac{t+1}{2} \right\rfloor,
\]
it follows that
\[
\sum_{i=0}^{\lfloor (t-1)/2 \rfloor} (p^{-2})^i
=
\frac{1-p^{-2\lfloor (t+1)/2 \rfloor}}{1-p^{-2}}.
\]
\end{proof}
Applying the previous lemma and the orbit size formulas, formula (\ref{SYYmmaa}) can be simplified. Let \(t\) be odd, in which case
\begin{align*}
    a_{p^k}(m) 
    &=\dfrac{(p-1)^2(p+1)^2p^{4k-4}}{p^{k-t-1}(p-1)} \sum_{\substack{0\le i< j\\ i+j=t}} p^{-3i-j}\\
    &=(p-1)(p+1)^2p^{3k+t-3} p^{-t}\,
\frac{1-p^{-2\lfloor (t+1)/2 \rfloor}}{1-p^{-2}}
\end{align*}
When \(t\) is odd, one has \(\lfloor (t+1)/2 \rfloor=(t+1)/2\). Thus,
\begin{align*}
    a_{p^k}(m) 
    &=(p-1)(p+1)^2p^{3k-3} \frac{1-p^{- (t+1) }}{1-p^{-2}}.
\end{align*}
Since \(p^2-1=(p+1)(p-1)\), one obtains \(1-{p^{-2}}=(p+1)(p-1)p^{-2}\), and hence
\begin{align*}
    a_{p^k}(m) 
    &=(p-1)(p+1)^2p^{3k-3} p^2\frac{1-p^{- (t+1) }}{(p+1)(p-1)}\\
    &=(p+1)p^{3k-1} (1-p^{-t-1 })
\end{align*}
Hence
 \begin{align}
    a_{p^k}(m) 
&=(p+1)p^{3k-1} p^{-t-1}(p^{t+1}-1)\nonumber\\
    &=(p+1)p^{3k-t-2}(p^{t+1} -1). \label{paritontapasu}
\end{align}
Assume next that \(t\) is even. Then
\begin{align*}
    a_{p^k}(m) 
    &=\dfrac{1}{p^{k-t-1}(p-1)}\Big(\sum_{\substack{0\le i< j\\ i+j=t}} |\mathcal O(D_{i,j})|+  |\mathcal O(D_{t/2,t/2})|\Big)\\
    &=\dfrac{1}{p^{k-t-1}(p-1)}\Big((p-1)^2(p+1)^2p^{4k-4}\sum_{\substack{0\le i< j\\ i+j=t}} p^{-3i-j}+  p^{4k-3-2t}(p-1)^2(p+1)\Big).
\end{align*}
Applying the previous summation lemma and the identity \(1-{p^{-2}}=(p+1)(p-1)p^{-2}\), one obtains
\begin{align*}
    a_{p^k}(m) 
    &=\dfrac{1}{p^{k-t-1}(p-1)}\Big((p-1)^2(p+1)^2p^{4k-4}p^{-t}\,
\frac{1-p^{-2\lfloor (t+1)/2 \rfloor}}{1-p^{-2}}+  p^{4k-3-2t}(p-1)^2(p+1)\Big)\\
&=\dfrac{1}{p^{k-t-1}(p-1)}\Big((p-1)^2(p+1)^2p^{4k-4}p^{2-t}\,
\frac{1-p^{-2\lfloor (t+1)/2 \rfloor}}{(p+1)(p-1) }+  p^{4k-3-2t}(p-1)^2(p+1)\Big)\\
&=\dfrac{1}{p^{k-t-1}(p-1)}\Big((p-1)(p+1)p^{4k-4}p^{2-t}\,
(1-p^{-2\lfloor (t+1)/2 \rfloor})+  p^{4k-3-2t}(p-1)^2(p+1)\Big).
\end{align*}
When \(t\) is even, one has \(\lfloor (t+1)/2 \rfloor=\frac{t}{2}\), and therefore
\begin{align*}
    a_{p^k}(m) 
&=\dfrac{1}{p^{k-t-1}(p-1)}\Big((p-1)(p+1)p^{4k-t-2}
(1-p^{-t})+  p^{4k-3-2t}(p-1)^2(p+1)\Big)\\
&=\dfrac{1}{p^{k-t-1}(p-1)}\Big((p-1)(p+1)p^{4k-t-2}
-(p-1)(p+1)p^{4k-2t-2}
+  p^{4k-3-2t}(p-1)^2(p+1)\Big)\\
&=\dfrac{1}{p^{k-t-1}(p-1)}\; (p-1)(p+1)p^{4k-3-2t}\big(p^{t+1}
- p
+  p-1  \big)\\
&= (p+1)p^{3k-t-2}(p^{t+1}-1),
\end{align*}
which is the same formula as in the odd case (\ref{paritontapasu}). Therefore, when \(v_p(m)=t\) and \(0<t<k\), one has
\begin{align}\label{Kaavai<j}
 a_{p^k}(m)= p^{3k-2-t}(p+1)(p^{t+1}-1).
\end{align}
Let us finally consider the case \(t=k\), when \(\det(A)=m=0\). Then 
\[
a_{p^k}(0)= p^{4k}-\displaystyle\sum_{t=0}^{k-1} B_t.
\]
From the case \(0\le t<k\), the summation formula (\ref{KuitSYmKaav}) gives
\[
a_p(m)=\dfrac{B_t}{p^{k-t-1}(p-1)}\iff B_t=p^{k-t-1}(p-1)a_p(m)
\]
and when \(t=0\), then
\[
a_{p^k}(m)=p^{3k-2}(p^2-1).
\]
Since the summation formula (\ref{KuitSYmKaav}) is in particular valid also for \(t=0\), it follows that
\begin{align*}
 B_0&=a_p(m)p^{k-1}(p-1)\\
&=p^{3k-2}(p^2-1)p^{k-1}(p-1)\\
&=p^{4k-3}(p^2-1)(p-1)
\end{align*}
For the correctness of the computations, it is useful to note that this can also be calculated as
\[
B_0=|\mathcal O(D_{0,0})| =|\mathrm{GL}_2(\mathbb{Z}_{p^k})|= p^{4k-3}(p-1)^2(p+1)=p^{4k-3}(p^2-1)(p-1),
\]
since all invertible matrices belong to the same orbit. When $0<i<k$, it was calculated by (\ref{Kaavai<j}) that
\begin{align*}
    B_t&=p^{k-t-1}(p-1)a_p(m)\\
    &= p^{k-t-1}(p-1)p^{3k-2-t}(p+1)(p^{t+1}-1)\\
    &=p^{4k-3-2t}(p^2-1)(p^{t+1}-1)\\
    &=p^{4k-2-t}(p^2-1)-p^{4k-3-2t}(p^2-1)
\end{align*}
Thus,
\begin{align*}
  a_{p^k}(0)&= p^{4k}-B_0- \sum_{t=1}^{k-1} B_t\\
  &=p^{4k-3}(p^2-1)(p-1)- (p^2-1)p^{4k-2}\sum_{t=1}^{k-1} p^{-t}+(p^2-1)p^{4k-3}\sum_{t=1}^{k-1}p^{-2t}.
\end{align*}
When \(\delta \ge 1\), the geometric sum is obtained as
\begin{align}\label{geomsar}
\sum_{t=1}^{k-1} p^{-\delta t}
=
\frac{p^{-\delta} - p^{-\delta k}}{1 - p^{-\delta}}
=
\frac{1 - p^{-\delta(k-1)}}{p^{\delta} - 1},
\end{align}
by means of which it is obtained that
\begin{align}
  a_{p^k}(0)&= p^{4k}-B_0- \sum_{t=1}^{k-1} B_t\nonumber\\
  &=p^{4k}-p^{4k-3}(p^2-1)(p-1)- (p^2-1)p^{4k-2}\frac{1 - p^{-(k-1)}}{p - 1}+(p^2-1)p^{4k-3}\frac{1 - p^{-2(k-1)}}{p^{2} - 1}\nonumber\\
  &=p^{4k}-p^{4k-3}(p^3-p^2-p+1)- (p+1)p^{3k-1}(p^{k-1} - 1) +p^{2k-1 }(p^{2k-2} - 1)\nonumber\\
  &=p^{4k}-p^{4k}+p^{4k-1}+p^{4k-2}-p^{4k-3}- (p+1)(p^{4k-2}  - p^{3k-1}) +p^{4k-3} - p^{2k-1 }\nonumber\\
  &=p^{4k-1} -p^{4k-3}+ p^{3k}-p^{4k-1}+p^{3k-1}  +p^{4k-3} - p^{2k-1 }\nonumber\\
  &= p^{3k}+p^{3k-1}  - p^{2k-1 }\nonumber\\
  &=p^{2k-1}(p^{k+1}+p^k-1).\label{Tapausk=Nolla}
\end{align}
Thus, the main theorem is proved.

\end{document}